\newtheorem{theorem}{Theorem}[section]
\newtheorem{lemma}[theorem]{Lemma}
\newtheorem{proposition}[theorem]{Proposition}
\theoremstyle{definition}
\newtheorem{definition}[theorem]{Definition}
\newtheorem{remark}{Remark}
\newtheorem{assumption}{Assumption}
\numberwithin{equation}{section}
\def\A{{\mathcal{A}}}
\def\f{{\mathcal{F}}}
\def\g{{\mathcal{G}}}
\def\I{{\mathcal{I}}}
\def\s{{\mathcal{S}}}
\def\L{{\mathcal{L}}}
\def\M{{\mathcal{M}}}
\def\D{{\mathcal{D}}}
\def\V{{\mathcal{V}}}
\def\X{{\mathcal{X}}}
\def\R{{\mathbb{R}}}
\def\Rd{{\mathbb{R}^d}}
\def\C{{\mathcal{C}}}
\def\N{{\mathbb{N}}}
\def\P{{\mathbb{P}}}
\def\eps{{\varepsilon}}
\def\nuvec{{\boldsymbol{\nu}}}
\def\nuv{{\nu_v}}
\def\nuu{{\nu_u}}
\def\nuc{{\nu_c}}
\def\phivec{{\boldsymbol{\phi}}}
\def\phiv{{\phi_{v}}}
\def\phiu{{\phi_{u}}}
\def\phic{{\phi_{c}}}
\def\Lu{{\L^{u}}}
\def\Lc{{\L^{c}}}
\def\xiv{{\xi_{v}}}
\def\xiu{{\xi_{u}}}
\def\xic{{\xi_{c}}}
\def\Vp{{\V_{p}}}
\def\Vc{{\V_{c}}}
\def\E{{\mathbb{E}}}
\newcommand*{\myprime}{^{\prime}\mkern-1.2mu}
\newcommand*{\mydprime}{^{\prime\prime}\mkern-1.2mu}
\DeclareMathOperator{\Supp}{Supp}
\newcommand{\indic}[1]{\mathds{1}_{\left\lbrace #1 \right\rbrace}}
\newcommand{\nn}{\nonumber}
\newcommand{\closure}[2][3]{%
{}\mkern#1mu\overline{\mkern-#1mu#2}}
\newcommand\norm[1]{\left\lVert#1\right\rVert}
\title{{\bf A measure-valued stochastic model for vector-borne viruses}}
\author[Mario Ayala]{Mario Ayala}
\address{UR 546 Biostatistique et Processus  Spatiaux, INRAE, Domaine St Paul Site Agroparc, F-84000 Avignon, France.}
\email{mario-antonio.ayala-valenzuela@inrae.fr}
\author[Jerome Coville]{Jerome Coville}
\address{UR 546 Biostatistique et Processus  Spatiaux, INRAE, Domaine St Paul Site Agroparc, F-84000 Avignon, France.}
\email{jerome.coville@inrae.fr}
\author[Raphael Forien]{Raphael Forien}
\address{UR 546 Biostatistique et Processus  Spatiaux, INRAE, Domaine St Paul Site Agroparc, F-84000 Avignon, France.}
\email{raphael.forien@inrae.fr}
\begin{document}

\maketitle

\begin{abstract}
In this work we propose a measure-valued stochastic process representing the dynamics of a virus population, structured by phenotypic traits and geographical space, and where viruses are transported between spatial locations by mechanical vectors. As a first example of the use of this model, we show how to use this model to infer results on the probability of extinction of the virus population. Later, by combining various scalings on population sizes, speed of diffusion of vectors, and other relevant model parameters, we show the emergence of  two systems of integro-differential equations as Macroscopic descriptions of the system. Under the existence of densities at time zero, we also show the propagation of this property for later times, and derive the strong formulation of the limiting systems of IDEs. These strong formulations, in a sense, correspond to spatial Lotka-Volterra competition models with mutation and vector-borne dispersal.
\end{abstract}

\section{Introduction}\label{IntroSect}
The selective pressure imposed on pathogens during the infection of a new type of  host is one of the  well known mechanisms that drive their evolution. 
For plant pathogens,  plant genes conferring major or partial resistance to these pathogens are then  valuable natural resources which in a context of a plant of agricultural interest  should be used in a way that maximizes the preservation of their efficiency in conjunction with the  gain of crop productivity they can ensure.    
To gain some insights on the possible  optimized deployment strategies, the study  of adaptation of pathogens to their hosts during repeated epidemic events is then of great importance. The understanding of such evolution process is at the heart of  evolutionary epidemiology research, see for example the review \cite{Restif2009}.  
In the evolutionary epidemiology literature, most of the models deal with either the adaptation process with oversimplified epidemic process, as described in \cite{crow_introduction_19710},  or conversely focus on the epidemic process forgetting a proper description of the evolution processes at stake. One of the main reasons essentially comes from the modelling tools used to describe these two phenomena. Indeed,  selection mutation population  models  are often used to describe  evolution phenomena ignoring the possible spatial structure of this epidemic event,  whereas epidemics tend to be represented by SEIR type models using the host as proxy for describing the epidemic  forgetting the description of the evolving pathogen population.  

Modelling approaches trying to  reconcile these two points of view have been recently introduced \cite{Day2004,day_applying_2007,Ohtsuki2006,iacono_evolution_2012, djidjou-demasse_steady_2017,fabre_epi-evolutionary_2022} where the epidemic approach considering the  host as a  proxy  has been adapted in order to take into account a partial description of the population that evolves and its  possible adaptation by mutation. However, whereas this approach seems well suited  for pathogens that behave like spores, it does not seems flexible enough to describe viral populations that disseminate in a field through vectors. As well, they do not take into account stochastic effects due to low population densities at the beginning of the epidemic events.

In the spirit of \cite{Day2004,day_applying_2007,Ohtsuki2006,iacono_evolution_2012, djidjou-demasse_steady_2017,fabre_epi-evolutionary_2022},  we propose and analyse  here new modelling tools that integrate on the same time scale the epidemic events and the adaptation processes.  However, unlike the approach proposed in \cite{Ohtsuki2006,iacono_evolution_2012, djidjou-demasse_steady_2017}  that uses the hosts as a proxy,  we fully describe the pathogen population with all its possible interactions making the assumption that the hosts is an environmental variable for the pathogen population. Our aim here is then to construct a stochastic realistic representation of the main processes of adaptation involved during an epidemic event, and to obtain  various large population asymptotic limits depending on the choice of the scaling parameter considered. The stochastic nature of our model gives room to incorporate demographic stochasticity, which is an important feature to consider, since early stages potentially affect later stages of the epidemic.  Moreover, our model permits to study both quantitative and qualitative trait scenarios. 

Some of the results we present here are in the spirit of the so-called hydrodynamic limits from the theory of interacting particle systems, for physics see for example \cite{demasi2006mathematical,kipnis1998scaling,seppalainen2008translation}, and for biology \cite{fournier_microscopic_2004,champagnat_invasion_2007, champagnat_individual_2008,meleard_stochastic_2015}. In particular, we use the methodology introduced in \cite{fournier_microscopic_2004}, and in subsequent works (\cite{champagnat_invasion_2007, champagnat_individual_2008,meleard_stochastic_2015}), to build a measure-valued stochastic model representing the dynamics of a virus population structured by time, space, and  phenotypic traits, and where viruses are subject to vector-borne dispersal, and can only grow in plants.

In the derivation of the scaling limits, our particular choice of re-scaling is important since it allows us to consider different scenarios that depend on the relation of the order of magnitude between the total number of viruses and vectors at time zero. In particular, when we have fewer vectors than viruses, in order to obtain a sensible limit, it becomes necessary to accelerate the diffusion of vectors. As the reader will see in Theorem \ref{Theoremlessintro} below, this acceleration has the consequence that for each $t>0$ the  population of vectors is given by the solution of an elliptic system which depends only on the current population of viruses at time $t$. We can interpret this as saying that the population of vectors is at an equilibrium that only changes through the time evolution of the virus population.

\subsection*{The Viral Epidemic model~}~\\


Let us consider three populations, denoted by   $\nuv(t), \nuc(t)$ and $\nuu(t)$, representing the viral population, the population of vectors that are charged with a virus, and the free vectors population, respectively. As in  \cite{fournier_microscopic_2004,champagnat_invasion_2007, champagnat_individual_2008,meleard_stochastic_2015}, let us first introduce, for a Polish space $X$, the set $\mathcal{M}_p(X)$ denoting the space of finite point measures on $X$. We define these populations by means of  point measures on the adequate  Polish spaces as follows. 
First, we consider the viral population $\nuv(t)$. We represent this population  by means of  a point measure over the space $ E \times \mathcal{X} $, where $E$ is a finite set of positions in a bounded smooth, at least $C^3$, domain $\D \subset \Rd$ and $ \mathcal{X} \subseteq \R^{n} $ is a compact set. The set $ E $ corresponds to the locations of the various host plants, and $ \mathcal{X} $ corresponds to the phenotypic trait space of viruses.
Thus, for $ t \geq 0 $,
\begin{equation}
    \nuv(t) = \sum_{i=1}^{N_v(t)} \delta_{x_i(t),z_i(t)},
\end{equation}
where $ N_v(t) := \langle \nuv(t), 1 \rangle $ is the total size of the virus population and
\begin{align*}
	(x_1(t), \ldots, x_{N_v(t)}(t)) \in E^{N_v(t)}, && (z_1(t), \ldots, z_{N_v(t)}(t)) \in \mathcal{X}^{N_v(t)}
\end{align*}
correspond respectively to an arbitrary ordering of the positions and traits of the viruses.


Similarly, the uncharged vectors population is represented by a finite point measure on $ \D $, $ \nuu(t) $, taking the form
\begin{align*}
	\nuu(t) = \sum_{i=1}^{N_u(t)} \delta_{Y_i^u(t)},
\end{align*}
where $ N_u(t) $ is the number of uncharged vectors and $ (Y^u_1(t), \ldots, Y^u_{N_u(t)}(t)) $ are their positions at time $ t \geq 0 $.
Finally, the population of vectors carrying viruses is represented by a measure $ \nuc(t) \in \mathcal{M}_p(\D \times \mathcal{X}) $, of the form
\begin{align*}
	\nuc(t) = \sum_{i=1}^{N_c(t)} \delta_{Y_i^c(t), z_i(t)},
\end{align*}
where, for $ i \in \lbrace 1, \ldots, N_c(t) \rbrace $, $ Y_i^c(t) $ is the position of the vector and $ z_i(t) $ is the trait of the unique virus it is carrying.


Figure \ref{enviFig} below, shows an example of the type of environment we have in mind. We have plotted plants in different colors to convey the idea that incorporating spatial dependency of some of the rates in this framework allows us to consider different varieties of plants. Notice that it is also possible to consider non-homogeneous (spatial) distributions of plants.

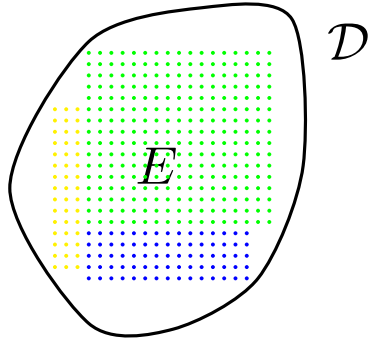
\begin{figure}[h]
\centering
\begin{tikzpicture} [line width=1.2, scale=1.5]
\draw plot [smooth cycle] coordinates {(1.0,.1)(1.75,0.05)(2.5,.5)(2.9,1.5)(2.8,2.8)(2.0, 2.9)(1.0,2.6)(0.3, 1.3)} node at (3.3,2.6)[ scale=2] {$\mathcal{D}$};
\draw node at (1.6,1.5)[scale=2]{$E$};
 \foreach \x in {1,1.1,...,2.7}
    \foreach \y in {1,1.1,...,2.6}
    {
    \fill[color=green] (\x,\y) circle (0.5pt);
    }
  \foreach \x in {1,1.1,...,2.5}
    \foreach \y in {0.5,0.6,...,1}
    {
    \fill[color=blue] (\x,\y) circle (0.5pt);
    }
  \foreach \x in {0.7,0.8,...,1}
    \foreach \y in {0.6,0.7,...,2.0}
    {
    \fill[color=yellow] (\x,\y) circle (0.5pt);
    }
\end{tikzpicture}
\caption{The spatial domain $\D$ represents the space in which vectors can diffuse (with normal reflection at the boundary). The colored dots represent the set $E$, i.e. the location of plants}
\label{enviFig}
\end{figure}

In addition to the definitions of the point measures $\nuv(t), \nuc(t)$ and $\nuu(t)$, let us now define a specific set of biological rules that mechanistically determine the way that these measures evolve over time. To reflect the adaptation process during an epidemic event, we will assume that viruses are submitted to growth, death, mutation  and transportation by vectors and that vectors move in $\D$ according to a reflected It\^o-diffusion and  that they can load and unload viruses on plants.  This translates into considering the following processes :

\begin{description}
\item[Reproduction] viruses on plants reproduce asexually at rate $b(x,z)$, where $x \in E$ denotes the position of the hosting plant and $z \in \X \subset \R^n$ the phenotypic trait of the virus. Given a reproductive event, with probability $1-\mu$ the new virus inherits its parent trait, and with probability $\mu$ undergoes mutation. In the latter case, the trait $z\myprime$ of the new individual is drawn from a probability distribution with
density $m(z,z\myprime)$ with respect to some measure $\bar{m}(dz\myprime)$ on $\X$.
\item[Death]viruses on plants die naturally at rate $d(z)>0$, and due to competition at rate $c N_x(t)$, where $N_x(t)$ denotes the number of viruses in a plant at position $x \in E$, and $c$ is a positive constant. Likewise, viruses being carried by a vector die at rate $\gamma(z)$.
\end{description}

\begin{description}
\item[Diffusion] vectors, free or charged with virus, diffuse in the spatial domain $\D \subset \Rd$ according to a
 non-degenerate (see \cite{fournier2010absolute}) It\^o diffusion with normal reflection at the boundary of the domain $\partial \D$, i.e. for $\alpha \in \lbrace u, c \rbrace$,  the diffusion has infinitesimal generator
\begin{align*}\label{Itogen}
\Lu \phiu(y) &= a^u (y) \cdot \nabla \phiu(y) + \frac{\sigma^u(y)^2}{2} \Delta \phiu(y), \\
\Lc \phic(w,e) &= a^c (w) \cdot \nabla^{(w)} \phic(w,e) + \frac{\sigma^c(w)^2}{2} \Delta^{(w)}  \phic(w,e), 
\end{align*}
where $\phiu : \bar{\D} \to \R$ and $\phic : \bar{\D} \times \X \to \R$ are elements of their corresponding domains:
\begin{align*}
D(\Lu) &= \left\{ \phi \in \C^2(\D): \nabla \phi(y) \cdot \vec{n}(y)  = 0, \quad \forall y \in \partial D \right\}, \\
D(\Lc) &= \left\{ \phi: \D \times \X \to \R : \forall e \in \X; \phi(\cdot,e) \in \C^2(\D) \text{ and }  \nabla \phi(y) \cdot \vec{n}(y)  = 0, \quad \forall w \in \partial D \right\}.
\end{align*}
where $\vec{n}(y)$ denotes the inward normal at $y \in \partial D$.
\item[Charging of vectors] vectors \textit{successfully} bite plants and get charged with a virus at rate $\beta(t,y,x,N_x(t),z)$, where $y \in \D$ denotes the current position of the vector, $x \in E$  the plant's location, $N_x(t)$ is the total number of viruses at $x$, and $z \in \X$ is the phenotype of the virus being taken. Trying to be consistent with biological studies (see \cite{moury2007estimation,gutierrez2012virus}) which suggest that a vector  carries and effectively transmits a small number of viruses, we assume that a vector can carry at most one virus at the same time.  
\item[Un-loading of vectors] vectors \textit{successfully} discharge viruses at rate $\eta(t,y,x,z)$, where $y \in \D$ denotes the current position of the vector, $x \in E$ the plant's location, and $z \in \X$ the phenotype of the virus being unloaded. 
\end{description}

Furthermore, we also make the following, biologically consistent, additional assumptions:
\begin{assumption}\label{AssPherates}
Let us assume that the coefficients $\sigma, a $ are Lipschitz continuous, $\sigma >0$, that the mutation density is a probability density, that the competition parameter $c$ is strictly positive, and  
that there exist $(\bar{b},\bar{d},\bar{\gamma}, \bar{\beta},\bar{\eta}) \in \R_+^5$, such that for all  $t \geq 0$ we have:
\begin{align*}
 \sup_{ (x,z) \in E \times \X}  b(x, z)  = \bar{b} < \infty, &\quad \sup_{ z \in \X }d(z) = \bar{d} < \infty, \quad    \sup_{ u \in \X}    \gamma(u)  =  \bar{\gamma} < \infty  \nonumber \\
 \sup_{r \in \R_+} \sup_{(y,z) \in \D \times \X } \int_E \beta(t,y,x,r,z) \, dx &= \bar{\beta} < \infty, \quad \sup_{ (y,z) \in \D \times \X } \int_E \eta(t,y,x,z) \, dx = \bar{\eta} < \infty.
\end{align*}
Moreover, for all $(t,y,x,z) \in \R_+ \times \D \times E \times \X$, we assume $\beta(t,y,x,\cdot,z): \R_+ \to \R_+$ to be Lipschitz continuous. 
\end{assumption}

Our framework allows to incorporate the spatial extent of the plant. We can assume for example the existence of some function $\beta_e : \R_+ \times \D \times E \times \R \times \X \to \R_+$, and $r_p>0$ such that:
\begin{equation*}
\beta(t,y,x, N_x(t),z) = 
\begin{cases}
\beta_e(t,|x-y|,N_x(t), z) & \text{ if } |x-y| \leq r_p, \\
0 & \text{ otherwise},
\end{cases}
\end{equation*}
for all $t\geq 0$. Analogous assumptions can be imposed over the model parameter $\eta$.

\subsection*{Infinitesimal generator}
We are interested in the time-evolution of the process $\lbrace \nuvec_t : t \geq 0 \rbrace= \lbrace (\nuv(t),\nuu(t),\nuc(t)) : t \geq 0 \rbrace$ taking values in the space of measures $\M_p := \M_p( E \times \X) \times \M_p(\D) \times \M_p( \D \times \X) $.  Let us introduce the set of cylindrical functions that generates the set of bounded and measurable functions from $\M_p$ to $\R$, necessary to describe the generator of the process. 

\begin{definition}\label{correncod}
We call an admissible triplet, and denote it by $\phivec$, a set of bounded and measurable functions $ \lbrace \phiv, \phiu, \phic \rbrace $ such that: $\phiu: \D \to \R$ is in the domain of the generator $\Lu$, $\phic: \Vc \to \R$ is in the domain of the generator $\Lc$. Moreover, we denote by $\Phi(\M_p)$ the set of all admissible triplets of this form.
\end{definition}

\begin{remark}\label{DiffuEnco}
For $\phivec \in \Phi(\M_p)$, we denote by $\L^\alpha \phivec$, the following admissible triplets:
\begin{align}
\L^u \phivec = \lbrace 0, \L^u_{y} \phiu, 0 \rbrace,\text{ and } \L^c \phivec = \lbrace 0,0, \L^c_{w} \phic \rbrace. \nonumber
\end{align}
\end{remark}
We now define the relevant set of cylindrical functions.
\begin{definition}\label{CyliDef}
The class of cylindrical functions $\f_C$ on $\M_p$ is given by functions $F_\phivec: \M_p \to \R$, of the form:
\begin{align*}
    F_{\phivec}(\nuvec) &:= F(\langle \phivec, \nuvec \rangle) := F (\langle\phiv,\nuv\rangle,\langle \phiu, \nuu\rangle,\langle \phic, \nuc\rangle)
\end{align*}
where $F \in \C^2(\R^3;\R)$, and $\phivec=\lbrace \phiv, \phiu, \phic \rbrace$ is an admissible triplet, and for $\alpha \in \lbrace v,u,c \rbrace$ we abused notation by using:
\begin{equation*}
\langle \phi_\alpha, \nu_\alpha \rangle :=  \int_{\V_{\alpha}} \phi_\alpha(x) \, \nu_\alpha(dx), 
\end{equation*}
with $\V_v= E \times \X, \V_u = \D$, and $\V_c= \D \times \X$.
\end{definition}

\begin{remark}\label{remarkusefunction0}
By Remark 1.1 in \cite{champagnat_invasion_2007} we know that (the spaces) $\C^{2}_0(\D)$ and $\C^{2,0}_0(\Vc)$ are dense in $\C(\D)$ and $\C(\Vc)$ for the uniform topology.
\end{remark}

As we did before, and whenever possible in the future, we will be consistent with our notation. Whenever we use the functions $\phivec=\lbrace  \phiv, \phiu, \phic \rbrace$ we will refer to functions satisfying the conditions of this section.

The infinitesimal generator $\L$, that corresponds to the dynamics described above, can be seen as the sum of a jump part, denoted by $\L_1$, and a diffusive part that we denote by $\L_2$. We  further split the jump part of the generator as the sum of operators, acting on cylindrical functions $F_{\phivec} \in \f_C$, dealing with every type of jump event:\\

The operator concerning demographics of viruses in plants is given as follows:
\begin{align*}
 \L^{dem} F_{\phivec}(\nuvec) &:=  (1-\mu) \int_{\Vp} b(x,z) \left[F_{\phivec}( (\nuv + \delta_{x,z}, \nuu, \nuc)) -F_{\phivec}(\nuvec) \right] \nuv(dx,dz) \nonumber \\
  &+ \mu \int_{\Vp} b(x,z) \int_{\X} m(z,e) \, \left[F_{\phivec}( (\nuv + \delta_{x,e}, \nuu, \nuc)) -F_{\phivec}(\nuvec) \right] \nuv(dx,dz) \, \bar{m}(de) \nonumber \\
  &+ \int_{\Vp} (d(z)+ c \langle \nu_v^{x},1 \rangle ) \, \left[F_{\phivec}( (\nuv - \delta_{x,z}, \nuu, \nuc)) -F_{\phivec}(\nuvec) \right] \nuv(dx,dz). 
\end{align*}
where for $ x \in E$, the measure $\nu_v^x$ denotes the restriction of $\nu_v$ to  $\lbrace x \rbrace \times \X$.

The loading and unloading of viruses on a vector is described in terms of the following operators:
 \begin{align*}
 \L^{load} F_{\phivec}(\nuvec) &:=  \L^{load} F_{\phivec}((\nuv,\nuu,\nuc)) \nonumber \\
  &= \int_{\Vp }\int_{D} \beta(t,y,x,N_x(t),z) \, \left[F_{\phivec}( (\nuv - \delta_{x,z}, \nuu-\delta_y, \nuc+\delta_{y,z})) -F_{\phivec}(\nuvec) \right] \nuv(dx,dz) \, \nuu(dy),
\end{align*}   
and 
 \begin{align*}
 \L^{unload} F_{\phivec}(\nuvec) &:=  \L^{unload} F_{\phivec}((\nuv,\nuu,\nuc)) \nonumber \\
  &= \int_{\Vc} \int_{E} \eta(t,w,x,e) \, \left[F_{\phivec}( (\nuv + \delta_{x,e}, \nuu+\delta_w, \nuc-\delta_{w,e})) -F_{\phivec}(\nuvec) \right] \nuc(dw,de) \, dx. 
\end{align*}   
The final type of jump event, the death of viruses on vectors, is given by:
 \begin{align*}\label{DefLlos}
 \L^{los} F_{\phivec}(\nuvec) &:=  \L^{los} F_{\phivec}((\nuv,\nuu,\nuc)) \nonumber \\
  &= \int_{\Vc} \gamma(e) \, \left[F_{\phivec}( (\nuv, \nuu+\delta_w, \nuc-\delta_{w,e})) -F_{\phivec}(\nuvec) \right] \nuc(dw,de). 
\end{align*} 
Summing up we have that the jump part is given by:
\begin{equation*}\label{L1def}
    \L_1 := \L^{dem}+ \L^{load} + \L^{un-load}+\L^{los}
\end{equation*}

For the diffusive part we need an admissible triplet $\phivec=\lbrace \phiv, \phiu, \phic \rbrace$ to be as in Definition \ref{correncod}. For functions $F_\phivec \in \f_C$ the diffusive part $\L_2$ can be obtained from It\^o's formula. The $\L_2$ operator is given by:
\begin{align}\label{DefgenL2}
&\L_2 F_\phivec(\nuvec) \nonumber \\ &:=\left(\int_{\D} \L_y^u \phiu(y) \nuu(dy)+\int_{\Vc} \Lc \phic(w,e) \nuc(dw,de) \right) \,  F\myprime_\phivec(\nuvec) \nonumber \\
&+\left( \int_{\D} \frac{(\sigma^u(y))^2}{2} |\nabla_y \phiu(y) |^2  \nuu(dy) +\int_{\Vc} \frac{(\sigma^c(w))^2}{2} |\nabla_w \phic(w,e) |^2  \nuc(dw,de) \right)\ F\mydprime_\phivec(\nuvec)
\end{align}
where we have abused notation by using $F_{\phivec}^{\prime}$ and $F_{\phivec}\mydprime$, and hence neglect the possible lack of membership of the functions $F\myprime$ and $F\mydprime$ to $\C^2(\R^3)$.

After introducing all the operators we have:
\begin{equation}\label{DefGen}
    \L F_\phivec(\nuvec) = \L_1 F_\phivec(\nuvec) + \L_2 F_\phivec(\nuvec)
\end{equation}
The description of our processes in terms of its infinitesimal generator is rather formal. We refer the reader to the Appendix \ref{SecWell} of this work, were in the vein of \cite{fournier_microscopic_2004}, we provide a rigorous definition on path-space, and a proof of the well-definedness of the processes corresponding to the generator $\L$.

\begin{remark}\label{Conservvectors}
Notice that the dynamics described above leaves invariant the total number of vectors.
\end{remark}

\subsection*{Existence and uniqueness}
We have just described the time-evolution of our measure-valued process in terms of the infinitesimal generator $\L$ given by \eqref{DefGen}. 
Our first result is to rigorously prove that under Assumption \ref{AssPherates} the process is well defined.

\begin{theorem}\label{Theoremwelldefined}
Let $\nuvec_0 = (\nuv(0), \nuu(0),\nuc(0))$ be such that we have:
\begin{equation}\label{pboundt02}
  \mathbb{E} \left[  \langle \mathbf{1}, \nuvec_0 \rangle^2 \right] < \infty ,    
\end{equation}
where
\begin{equation*}
\langle \mathbf{1}, \nuvec_0 \rangle := \int_{E \times \X} 1 \, \nuv(0)(dx,dz) +\int_{\D} 1 \, \nuu(0)(dy) + \int_{\D \times \X} 1 \, \nuc(0)(dy,dz).     
\end{equation*}
Then, under Assumption \ref{AssPherates}, part a), the process $\left(\nuvec_t\right)_{t \geq 0} =\left( \nu_v(t),\nuu(t), \nuc(t) : t \geq 0  \right)$ satisfies:
\begin{equation}\label{pboundtone}
  \mathbb{E} \left[  \sup_{t \in [0,T]}\langle \mathbf{1}, \nuvec_t \rangle^2  \right] < \infty.     
\end{equation}
In particular, we also have that the process $\nu_t$ is well-defined.
\end{theorem}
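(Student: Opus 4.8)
The plan is to follow the now-standard localization scheme of \cite{fournier_microscopic_2004}. Write $S_t := \langle \mathbf{1}, \nuvec_t \rangle = N_v(t) + N_u(t) + N_c(t)$ and introduce the stopping times $\tau_n := \inf\{t \geq 0 : S_t \geq n\}$. On the interval $[0, \tau_n]$ the process carries at most $n$ particles, so every jump rate is bounded (each of the finitely many particles jumps at a rate controlled by the constants of Assumption \ref{AssPherates}, the competition rate being bounded by $cn$), and the diffusion coefficients are Lipschitz and bounded; hence the stopped process is a well-defined jump--diffusion with finitely many jumps on compact time intervals. This is the content of the construction carried out in Appendix \ref{SecWell}, to which I defer the pathwise existence statement.

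The heart of the argument is an estimate for the generator acting on the cylindrical function associated with $\phivec = \{1,1,1\}$ and $F(a,b,c) = (a+b+c)^2$, i.e.\ $F_{\phivec}(\nuvec) = S^2$. Since this test function is constant in the spatial and trait variables, $\nabla \phi \equiv 0$ and $\L^u \phi = \L^c \phi = 0$, so the diffusive part \eqref{DefgenL2} contributes nothing: diffusion does not change the total mass. Among the jump events, only reproduction and unloading increase $S$, each by exactly one, while death on plants, loading, and death on vectors either decrease $S$ or leave it unchanged. Using $(s+1)^2 - s^2 = 2s+1$ for the up-jumps and discarding the nonpositive contributions of the down-jumps, and bounding the total up-rate by $\bar b\, N_v + \bar\eta\, N_c \leq (\bar b + \bar\eta) S$ (here I use $\sup b \leq \bar b$, that $m$ is a probability density, and $\sup_y \int_E \eta\, dx \leq \bar\eta$), I obtain
\begin{equation*}
\L F(\nuvec) \leq (\bar b + \bar\eta)\, S\,(2S+1) \leq C\big(1 + \langle \mathbf{1}, \nuvec\rangle^2\big),
\end{equation*}
for a constant $C$ depending only on $\bar b, \bar\eta$. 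Applying Dynkin's formula to the stopped process and then Gronwall's lemma yields
\begin{equation*}
\E\big[S_{t\wedge\tau_n}^2\big] \leq \big(\E[S_0^2] + CT\big)\, e^{CT}, \qquad t \in [0,T],
\end{equation*}
a bound that is finite by \eqref{pboundt02} and, crucially, uniform in $n$.

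From this uniform bound non-explosion follows: since $S_{t\wedge\tau_n} \geq n$ on $\{\tau_n \leq t\}$, Markov's inequality gives $\P(\tau_n \leq t) \leq \E[S_{t\wedge\tau_n}^2]/n^2 \to 0$, so $\tau_n \to \infty$ almost surely and the process is defined on all of $[0,\infty)$; Fatou's lemma then transfers the $L^2$ bound to the unstopped process at each fixed time. To upgrade this to the supremum bound \eqref{pboundtone}, I use that $S$ only ever increases through the up-jumps: writing $B_t$ for the counting process of reproduction and unloading events, one has $\sup_{s\leq t} S_s \leq S_0 + B_t$, and $B$ admits the predictable intensity $\lambda_s \leq (\bar b + \bar\eta) S_s$. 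Decomposing $B_t$ into its compensator $A_t = \int_0^t \lambda_s\,ds$ and the associated square-integrable martingale, whose predictable quadratic variation is again $A_t$, and applying Doob's maximal inequality together with the fixed-time $L^2$ bound already established (which controls $\int_0^T \E[S_s^2]\,ds$, hence $\E[A_T^2]$), gives $\E[\sup_{s\leq T} S_s^2] < \infty$, which is \eqref{pboundtone}.

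The step I expect to be the main obstacle is controlling the \emph{state-dependent} reproduction rate: because births occur at total rate proportional to $N_v$, the total mass is a branching-type quantity with no a priori bound, and one cannot bound the rates uniformly without first localizing. The localization--Dynkin--Gronwall loop is precisely what resolves this, and the key structural fact that makes it close is that the only super-linear rate in the model, the quadratic competition death rate $cN_x$, acts to \emph{decrease} the total mass and can therefore be safely discarded in the upper bound.
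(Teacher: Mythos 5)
Your proposal is correct, but in its second half it takes a genuinely different route from the paper. The paper (Proposition \ref{Propcontrolp}, proved for general $p\geq 1$) first reduces to the virus population alone via the conservation of the total number of vectors (Remark \ref{Conservvectors}), and then works \emph{pathwise} with the Poisson-integral representation \eqref{EquYt}: since the retained up-jump terms (clonal births, mutant births, unloadings) are non-decreasing integrals in $t$, the supremum over $[0,t\wedge\tau_n]$ is bounded directly by the initial mass plus these integrals, so a single expectation--Gronwall loop applies to $\E[\sup_{s\leq t\wedge\tau_n}\langle 1,\nuv(s)\rangle^p]$ itself, followed by Fatou. You instead keep the full mass $S_t=\langle\mathbf{1},\nuvec_t\rangle$, run Dynkin plus Gronwall on the generator bound $\L F\leq C(1+S^2)$ to get a \emph{fixed-time} $L^2$ bound uniform in $n$, and only then upgrade to the supremum by a separate argument: $\sup_{s\leq T}S_s\leq S_0+B_T$ with $B$ the up-jump counting process, whose compensator $A_t=\int_0^t\lambda_s\,ds$ and martingale part are controlled by the fixed-time bound (note Doob's inequality is not even needed, since $B$ is non-decreasing and $\E[B_T^2]\leq 2\E[A_T^2]+2\E[(B_T-A_T)^2]$ suffices). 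Both arguments are sound; the paper's buys the supremum bound in one pass and scales verbatim to all moments $p$, at the price of invoking the explicit pathwise construction, while yours is purely generator-based and cleanly separates non-explosion from the maximal estimate. One small imprecision in your closing remark: the competition death rate is not the only super-linear rate --- the total loading rate is bilinear in $\langle 1,\nuv\rangle\langle 1,\nuu\rangle$ --- but since loading also decreases the total mass, this does not affect your argument.
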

For a proof we refer to the Appendix, where we proved \eqref{pboundtone} for higher-order moments, and where we also showed that the dynamics of the process $( \nuvec_t)_{t \geq 0}$ indeed corresponds to the one described by the infinitesimal generator $\L$ .

\subsection*{Population-level descriptions as observables}
The framework of this paper allows us to directly recover some quantities of interest as observables of the system. For example, the evolution of the process representing the total virus population can be recovered by integrating the constant function 1 with respect to the measures $\nuv$ and $\nuc$:
\begin{equation}\label{TotalP}
    P_v(t) := \langle 1, \nuv(t)  \rangle + \langle 1, \nuc(t)\rangle.
\end{equation}
In a similar way, we can also recover the total number of vectors charged with a virus and those free from viruses:
\begin{align}
    N_u(t) := \langle 1, \nuu(t) \rangle,  \text{ and } N_c(t) := \langle 1, \nuc(t) \rangle. \nonumber
\end{align}
Moreover, we show how this description can be used to extend known results about extinction to the case of vector-borne dispersal.

\begin{theorem}\label{ExtincTheoremintro}
Suppose that the initial set of measures $(\nuv(0), \nuu(0), \nuc(0))$ is such that
\begin{equation*}
    \E \left[ \langle \nu_v(0) , 1 \rangle \right] < \infty.
\end{equation*}
Under Assumption \ref{AssPherates}, we have that the total-virus population process $P_v(t)$ given by \eqref{TotalP} goes extinct almost surely.
\end{theorem}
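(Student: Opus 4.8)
The plan is to reduce the statement to the almost-sure extinction of a scalar logistic birth–death process, using crucially that the number of vectors—and hence the number of viruses that can be ``hidden'' on vectors—is conserved and finite. First I would apply Dynkin's formula (justified by Theorem \ref{Theoremwelldefined}) to the linear observable $F_\phivec(\nuvec)=\langle 1,\nuv\rangle+\langle 1,\nuc\rangle=P_v$, obtained from Definition \ref{CyliDef} with $F(a,b,c)=a+c$ and $\phiv=\phiu=\phic\equiv 1$. Because $P_v$ is linear and $\phi\equiv1$ is constant, the diffusive part $\L_2$ in \eqref{DefgenL2} vanishes, and the operators $\L^{load},\L^{unload}$ contribute nothing since they only transfer a virus between $\nuv$ and $\nuc$. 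What survives is
\begin{equation*}
\L P_v(\nuvec) = \int_{\Vp} b(x,z)\,\nuv(dx,dz) - \int_{\Vp}\big(d(z)+c\langle\nu_v^x,1\rangle\big)\,\nuv(dx,dz) - \int_{\Vc}\gamma(e)\,\nuc(dw,de).
\end{equation*}
Thus $P_v$ is a non-negative integer-valued càdlàg process whose only up-jumps come from reproduction, and $0$ is absorbing, since all rates are integrals against $\nuv$ or $\nuc$, which both vanish once $P_v=0$.

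Second, I would extract the two structural bounds that make the comparison work. By Remark \ref{Conservvectors} the total number of vectors $V:=N_u(0)+N_c(0)$ is conserved, and since a vector carries at most one virus, $N_c(t)\le V$ for all $t$; hence $N_v(t)\ge P_v(t)-V$. Using Assumption \ref{AssPherates} (so $b\le\bar b$, $d_{\min}:=\min_{z\in\X}d(z)>0$, and $\gamma$ bounded below by some $\delta_\gamma>0$) together with the Cauchy–Schwarz bound $\sum_{x\in E}\langle\nu_v^x,1\rangle^2\ge N_v^2/|E|$, the $+1$-intensity of $P_v$ is at most $\bar b\,N_v\le\bar b\,P_v$, while its $-1$-intensity is at least $\delta\,P_v+\tfrac{c}{|E|}(P_v-V)_+^2$ with $\delta:=\min(d_{\min},\delta_\gamma)$. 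In words: reproduction grows at most linearly, whereas competition forces a super-linear death rate as soon as $P_v$ exceeds the finite vector reservoir $V$.

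Third, these level-wise intensity bounds let me dominate $P_v$ from above by the birth–death chain $Z$ on $\Z_+$ with birth rates $\lambda_n=\bar b\,n$ and death rates $\mu_n=\delta\,n+\tfrac{c}{|E|}(n-V)_+^2$, coupled so that $Z(0)=P_v(0)$, which is finite almost surely since $\E[\langle\nu_v(0),1\rangle]<\infty$ and the vector measures are finite point measures. For this chain $\mu_n/\lambda_n\to\infty$, hence $\sum_{n\ge 0}\prod_{k=1}^n(\mu_k/\lambda_k)=\infty$, which is the classical criterion guaranteeing absorption of $Z$ at $0$ almost surely, and in particular non-explosion. Since $0\le P_v(t)\le Z(t)$ and $0$ is absorbing for $P_v$, almost-sure extinction of $Z$ forces $P_v(t)\to 0$ almost surely, which is the claim.

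I expect the main obstacle to be making the domination $P_v\preceq Z$ rigorous: the $+1$-intensity of $P_v$ depends on the full configuration $\nuvec_s$ and not merely on $P_v$, so the comparison must be carried out at the level of stochastic intensities—e.g. by driving both processes with the same Poisson measures and invoking a comparison lemma for counting processes—checking that the inequalities above hold whenever the two processes occupy the same level. A secondary but genuinely necessary point concerns small populations: the linear term $\delta\,P_v$ in $\mu_n$, and hence the absorption of $Z$ at states below $V$, relies on a strictly positive per-capita clearance rate for viruses carried by vectors. This is where one must use that $\gamma>0$ (the analogue of $d>0$ on plants), so that the finitely many viruses sitting on the conserved pool of vectors are eventually removed rather than transported indefinitely.
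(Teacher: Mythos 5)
Your proposal is correct in its essentials but follows a genuinely different route from the paper's. The paper argues in three steps: first it proves the uniform-in-time moment bound $\sup_{t\geq 0}\E[P_v(t)]<\infty$ by differentiating $f(t)=\E[P_v(t)]$ (via the martingales of Proposition \ref{MartingalesProp}) and closing a logistic differential inequality $f\myprime\leq af-\tfrac{c}{|E|}f^2$, using exactly your Cauchy--Schwarz bound (stated there as Lemma \ref{inequalarren}), Jensen's inequality, and the conservation of vectors (Remark \ref{Conservvectors}); it then shows the dichotomy $\lim_{t\to\infty}P_v(t)\in\lbrace 0,\infty\rbrace$ a.s.\ by a ``visits to $M$ force visits to $M-1$'' argument based on a positive lower bound for the one-step down-transition probability; finally it excludes the value $\infty$ by Fatou's lemma. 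You instead work pathwise: you dominate $P_v$ by a genuine logistic birth--death chain $Z$ (comparison of stochastic intensities, valid because whenever both processes sit at level $n$ the up-rate of $P_v$ is at most $\bar b n$ and its down-rate at least $\delta n+\tfrac{c}{|E|}(n-V)_+^2$), and then invoke the classical absorption criterion $\sum_n\prod_{k\leq n}(\mu_k/\lambda_k)=\infty$. This is the route used in \cite{fournier_microscopic_2004} and related works, and it buys you something: the argument is purely pathwise, never uses the hypothesis $\E[\langle\nu_v(0),1\rangle]<\infty$ (finiteness of the initial point measures suffices), avoids Fatou, and yields the stronger quantitative conclusion $P_v\preceq Z$. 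What the paper's route buys in exchange is the uniform moment bound itself, which is of independent interest. Both arguments ultimately rest on the same two structural facts: Lemma \ref{inequalarren} and the conservation of the vector pool.

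One caveat, which to your credit you flag yourself: your linear death term $\delta n$ requires $\delta_\gamma:=\inf_{z\in\X}\gamma(z)>0$, and Assumption \ref{AssPherates} only provides an \emph{upper} bound on $\gamma$, so this is an assumption beyond the stated hypotheses. Note, however, that the paper's own proof has the same hidden requirement: its lower bound $\hat{d}M/(\bar{b}M+\bar{d}M+cM^2\bar{\gamma}V_0)$ for the probability of stepping from $M$ to $M-1$ tacitly treats all $M$ viruses as subject to the plant death rate $\hat d$, which fails when some of them sit on vectors and $\gamma$ is allowed to vanish. Indeed, without some positivity of $\gamma$ (or of $\eta$ combined with $d>0$) the theorem itself is false: if $\gamma\equiv 0$ and $\eta\equiv 0$, a single virus loaded on a vector at time zero persists forever, so $P_v$ never reaches $0$. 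Your explicit hypothesis is therefore not a defect relative to the paper; it makes visible a condition that the paper's argument uses silently.
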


We refer to Section \ref{ExtinctionSect} for details on the proof of this theorem.\\

\subsection*{Deterministc limits}
Our main results concern the derivation of large population-level deterministic descriptions of our system. By introducing a rescaling parameter $K$, we derive these descriptions in the spirit of a law of large numbers for our processes. In this setting, the rescaling parameter $K$ has the biological interpretation of imposing a carrying capacity to the system. We incorporate this idea by letting the competition parameter $c$ depends on $K$ as follows:
\begin{equation*}
    c_K = \frac{c}{K}.
\end{equation*}
Moreover, we consider two scenarios: one in which the population of viruses and vectors are of the same order, and one in which the population of vectors is of smaller order than that of viruses. We model these two scenarios by introducing a new parameter $\lambda \in (0,1]$, which rescales the population of viruses and vectors, at time zero, as follows:
\begin{align*}\label{lambdak2zero}
    \nuv^{(K)}(0) = \frac{1}{K} \nuv(0,K), \quad \nuu^{(K)}(0) = \frac{1}{K^\lambda} \nuu(0,K), \text{ and } \nuc^{(K)}(0) = \frac{1}{K^\lambda} \nuc(0,K).  
\end{align*}
where for each $\alpha \in \lbrace v,u, c \rbrace$ and $K \in \N$, the random measure $\nu_\alpha(0,K)$ is an element of $\M_p(\V_\alpha)$. An important ingredient needed for our results is the assumption that at time zero the sequences  $\lbrace \nu_\alpha^{(K)}(0) \rbrace_{ K \in \N}$ converge for $\alpha \in \lbrace v, u , c \rbrace$. The convergence of these sequences implies that for the case $\lambda \in (0,1)$, the total number of vectors is of smaller order than the total population of viruses. This \textit{lack of vectors} suggests the need to let the rest of the parameters depend on $K$ as well. In particular, grossly speaking,  we compensate the lack of vectors by letting the processes $\nuu(t,K)$ and $\nuc(t,K)$ (i.e. the time evolution of the processes with initial condition $\nuu(0,K)$ and $\nuc(0,K)$) evolve with diffusive generator given by: 
\begin{equation}\label{lambdaKaccel}
    \L_K^{\alpha,\text{accel}} = K^{1-\lambda} \L^\alpha, 
\end{equation}
where for $\alpha \in \lbrace u, c \rbrace$, $\L^\alpha$ denotes the infinitesimal generator of the It\^o diffusion driving the movement of the $\alpha$-class of vectors. Furthermore, the explicit form on which the rescaled parameters $\eta_K$, $\beta_K$ and $\gamma_K$ will depend on $K$ comes from the idea of making the distance travelled by a vector between loading and unloading of order one. Additionally, we wish the number of virus deaths (on vectors) to be of the same order. This means:
\begin{equation*}
    K^{1+\lambda} \beta_K = K^\lambda \eta_K = K^\lambda \gamma_K = O(K)
\end{equation*} 
or equivalenty
\begin{equation*}
 \eta_K = O(K^{1-\lambda}), \quad 
 \gamma_K = O(K^{1-\lambda}),    
\text { and }
 \beta_K = O(K^{-\lambda}).  
\end{equation*}
We formalize these ideas with the following assumption:
\begin{assumption}\label{Assbetakreg2}
There exists a Lipschitz continuous function $\beta:\R_+ \times \mathcal{D} \times E \times \R_+ \times \mathcal{X} \to \R$, continuous functions $\eta:\R_+ \times \mathcal{D} \times E \times \mathcal{X} \to \R$ and $\gamma: \mathcal{X} \to \R$, and a positive constant $c$ such that:
\begin{equation*}
     \beta_K(t,y,x,N_x(t),z) = K^{-\lambda} \beta(t,y,x,N_x(t),z), \quad c_K = \frac{c}{K}, \quad      \eta_K(t,y,x,z) = K^{1-\lambda} \eta(t, y,x,z), \quad      \gamma_K(z) = K^{1-\lambda} \gamma(z),
\end{equation*}
for all $t \in \R_+$. Moreover, we assume the rest of the parameters to be independent of the scaling parameter $K$, and all parameters together satisfy Assumption \ref{AssPherates}.
\end{assumption}
Under the assumptions above, we normalize our processes as follows:
\begin{align}\label{lambdak2}
    \nuv^{(K)}(t) = \frac{1}{K} \nuv(t,K), \quad \nuu^{(K)}(t) = \frac{1}{K^\lambda} \nuu(t,K), \text{ and } \nuc^{(K)}(t) = \frac{1}{K^\lambda} \nuc(t,K).  
\end{align}
where for $\alpha \in \lbrace u, c \rbrace$, the process $\nu_\alpha(t,K)$ has dynamics with diffusive generator given by $\L_K^{\alpha,\text{accel}}$ as in \eqref{lambdaKaccel}.\\

In the two regimes described above ( i.e. if $\lambda=1$ or $\lambda \in (0,1)$) we show that, as the parameter $K$ tends to infinity, the normalized triplet of  measure-valued processes $(\nuv^{(K)}(t), \nuu^{(K)}(t), \nuc^{(K)}(t) )$ converges in path-space to a deterministic limiting triplet $(\xi_v(t), \xi_u(t), \xi_c(t) )$ characterized as being the solution of a system of non-local integro-differential equations. We refer the reader to Theorem \ref{IDElambdaequal} and Theorem \ref{IDElambdaless} in Section \ref{Mainresultssect}, which make rigorous this convergence in a general setting and describe the precise assumptions needed at time zero. In the following paragraphs, we present two simpler versions of Theorem \ref{IDElambdaequal} and Theorem \ref{IDElambdaless}  where in particular the deaths of viruses on vectors are neglected.

\subsection*{First regime}
For simplicity let us assume that at time zero the sequences of finite point measures $\lbrace \nuv^{(K)}(0) \rbrace_{ K \in \N}$, $\lbrace \nuu^{(K)}(0) \rbrace_{ K \in \N}$ and $\lbrace \nuc^{(K)}(0) \rbrace_{ K \in \N}$ converge to deterministic limiting measures $\xi_v(0)$, $\xi_u(0)$ and $\xi_c(0)$, respectively. Moreover, we assume that these limiting measures are absolutely continuous with respect to the relevant Haar measures in their respective state spaces (i.e., the counting measure $\zeta_E( dx)$ on $E$, the Lebesgue measure on $\D$, etc\ldots). Let us denote the densities of the limiting measures $\xi_v(0)$, $\xi_u(0)$ and $\xi_c(0)$, by $g_v(x,z), g_u(y)$, and $g_c(y,z)$ respectively. In Theorem \ref{PropAbsol}, we show that for reversible diffusions we have the propagation of absolutely continuity for later times. With an abuse of notation let us also denote by $g_v(t,x,z), g_u(t,y)$, and $g_c(t,y,z)$ respectively, these densities for later times $t>0$.\\

For the sake of clarity let us assume that vectors diffuse in space according to reflected Brownian motion. Then a consequence of Theorem \ref{IDElambdaequal} is the following result which resembles the so-called Lotka-Volterra competition system in the presence of diffusion by vectors. 

\begin{theorem}\label{Theoremequalintro}
Let $\lambda=1$, and let $\nuvec^{(K)}_0= (\nuv^{(K)}(0),\nuu^{(K)}(0), \nuc^{(K)}(0)) $ be such that:
\begin{equation*}
    \E \left[ \left(\langle 1, \nuv^{(k)}(0) \rangle + \langle 1, \nuu^{(k)}(0) \rangle + \langle 1, \nuc^{(k)}(0) \rangle \right)^3 \right] < \infty 
\end{equation*}
Then, the process $\lbrace \nuvec^{(K)}(t) : t \geq 0 \rbrace$ converges, as $K \to \infty$, to a deterministic process $\xi_t = (\xi_v(t), \xi_u(t), \xi_c(t))$ with densities $(g_v, g_u, g_c)$ being the unique solution of the following system:
\begin{equation*}
\left\{ 
\begin{aligned}
\frac{\partial}{\partial t} g_v(t,x,z) &=  (1-\mu) b(x,z) \, g_v(t,x,z)+ \mu \int_\X b(x,z\myprime) m(z^\prime, z) \,  g_v(t,x,z\myprime) \, dz^\prime    \nonumber \\
&- \left[ d(z) + c \, \left( \int_\X g_v(t,x,z^\prime) dz\myprime \right) \right]g_v(t,x,z)   \nonumber \\
&- \left[\int_\D \beta\left(y,x,\int_\X g_v(t,x,z^\prime) dz^\prime,z\right) g_u(t,y) \, dy \right] \, g_v(t,x,z) +\left( \int_{\D} \eta(y,x,z) \,  g_c(t,y,z) \, dy \right), \nonumber \\
\frac{\partial}{\partial t} g_u(t,y)  &=  \Delta g_u(t,y) - \left[\int_{ E \times \X} \beta\left(y,x,\int_\X g_v(t,x,z^\prime) dz^\prime,z\right) \,  g_v(t,x,z) \, \zeta_E( dx) \, dz \right]   \, g_u(t,y)  \nonumber \\
&+\int_{E \times \X} \eta(y,x,z) \,  g_c(t,y,z) \, \zeta_E( dx) \, dz, \nonumber \\
\frac{\partial}{\partial t} g_c(t,y,z)&= \Delta_y g_c(t,y,z)+ \left[\int_{ E } \beta\left(y,x,\int_\X g_v(t,x,z^\prime) dz^\prime,z\right) \,  g_v(t,x,z) \, \zeta_E( dx)  \right]   \, g_u(t,y)  \nonumber \\
&-\left(\int_{E} \eta(y,x,z) \,\zeta_E( dx) \right)\,  g_c(t,y,z), 
\end{aligned}
\right.
\end{equation*}
with initial data and boundary conditions given by:
\begin{align*}
&g_v(0,x,z) = g_v(x,z), \quad g_u(0,y) = g_u(y), \text{ and } \quad g_c(0,y,z) = g_c(y,z)  \\
&\nabla g_u(t,y) \cdot \vec{n}(y) = 0  \quad \text{ and } \quad \nabla g_u(t,y,z) \cdot \vec{n}(y) = 0 
\end{align*} 
for all $ y \in \partial \D$.\\
\end{theorem}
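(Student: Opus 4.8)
The plan is to obtain this statement as a specialization of the general convergence result (Theorem \ref{IDElambdaequal}), followed by a passage from the weak, measure-valued formulation to the strong, pointwise PDE formulation using the propagation of absolute continuity (Theorem \ref{PropAbsol}). First I would verify that the hypotheses of Theorem \ref{IDElambdaequal} hold under the present choices. Taking $\lambda = 1$ makes the acceleration exponent $1-\lambda$ in \eqref{lambdaKaccel} vanish, so the vectors diffuse with their unscaled generators; assuming reflected Brownian motion means $a^u \equiv a^c \equiv 0$ and $(\sigma^u)^2/2 = (\sigma^c)^2/2 = 1$, so that $\Lu$ and $\Lc$ reduce to the Laplacian $\Delta$ with Neumann boundary condition; and neglecting the death of viruses on vectors amounts to $\gamma \equiv 0$. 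With these choices Assumption \ref{Assbetakreg2} is satisfied, and together with the assumed third-moment bound at time zero, Theorem \ref{IDElambdaequal} yields convergence of $\nuvec^{(K)}$ in path-space to a deterministic triplet $\xi_t = (\xi_v(t), \xi_u(t), \xi_c(t))$ characterized as the unique solution of the limiting system in weak form.

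For completeness I would recall the three ingredients underlying that convergence, since they locate where the moment hypothesis is needed. Testing the rescaled process against an admissible triplet $\phivec$ and applying Dynkin's formula gives a semimartingale decomposition $\langle \phivec, \nuvec^{(K)}_t \rangle = \langle \phivec, \nuvec^{(K)}_0 \rangle + \int_0^t \Psi^{(K)}_s \, ds + M^{(K)}_t(\phivec)$, where the drift $\Psi^{(K)}$ collects the rescaled contributions of $\L$. A direct computation of the predictable bracket gives $\langle M^{(K)}(\phivec) \rangle_t = O(1/K)$, since each jump perturbs $\langle \phivec, \nuvec^{(K)} \rangle$ by $O(1/K)$ while jumps occur at rate $O(K)$; Doob's inequality then sends the martingale part to zero in $L^2$, the quadratic competition and bilinear loading rates being controlled by the third-moment bound. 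Tightness of $\{ \nuvec^{(K)} \}$ in $D([0,T], \M_p)$ follows from the Aldous--Rebolledo criterion together with the compact containment provided by the uniform mass bounds of Theorem \ref{Theoremwelldefined}, where the compactness of $E \times \X$, $\bar{\D}$ and $\bar{\D} \times \X$ is essential. Finally, any limit point solves the deterministic weak system (the competition being local in $x \in E$, a finite set, and products of the empirical measures passing to products of their deterministic limits), and uniqueness for this weak system upgrades subsequential to full convergence.

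The step particular to the present statement is the derivation of the strong formulation. Since reflected Brownian motion is reversible with respect to the Lebesgue measure on $\D$, Theorem \ref{PropAbsol} applies: given the densities $g_v, g_u, g_c$ at time zero, the measures $\xi_v(t), \xi_u(t), \xi_c(t)$ stay absolutely continuous for all $t > 0$, with densities which I also denote $g_v(t,\cdot), g_u(t,\cdot), g_c(t,\cdot)$. I would then insert these densities into the weak equation and integrate by parts in the diffusive terms. By Green's identity $\int_\D \Delta \phiu \, g_u \, dy = \int_\D \phiu \, \Delta g_u \, dy$ up to a boundary integral that vanishes because the Neumann condition $\nabla \phiu \cdot \vec{n} = 0$ is built into $D(\Lu)$; the companion boundary integral then forces the homogeneous Neumann condition $\nabla g_u \cdot \vec{n} = 0$ on $\partial \D$, and similarly for $g_c$. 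Testing against arbitrary admissible $\phivec$ and invoking the density of $\C^2_0(\D)$ and $\C^{2,0}_0(\Vc)$ (Remark \ref{remarkusefunction0}) allows me to strip the test functions and read off the three pointwise equations, the reaction terms already appearing in integral form against $g_v, g_u, g_c$.

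The hard part is exactly this weak-to-strong passage: justifying that the densities furnished by Theorem \ref{PropAbsol} are regular enough (spatially $\C^2$, or at least in a Sobolev class admitting a well-defined normal trace) for the boundary manipulation to be legitimate, and that the Neumann condition truly transfers from the test functions onto $g_u$ and $g_c$. I would settle this by a parabolic bootstrap: each component satisfies a linear parabolic Neumann problem whose coefficients are bounded and Lipschitz and whose source terms are bounded in terms of the other two densities, so interior and boundary regularity theory upgrades $g_\alpha$ to the smoothness needed to legitimize Green's identity. Uniqueness of the strong system is then inherited from the uniqueness of the weak system already granted by Theorem \ref{IDElambdaequal}, since every strong solution is in particular a weak one.
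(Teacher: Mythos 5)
Your proposal follows exactly the paper's route: specialize Theorem \ref{IDElambdaequal} (taking $\lambda=1$, reflected Brownian motion so that $\L^u=\L^c=\Delta$ with Neumann conditions, and $\gamma\equiv 0$), invoke Theorem \ref{PropAbsol} for propagation of absolute continuity (legitimate since reflected Brownian motion is reversible with respect to Lebesgue measure on $\D$), and then pass from the weak measure-valued formulation to the strong pointwise system by integrating by parts and stripping test functions. The paper treats this final weak-to-strong step as immediate, whereas you explicitly flag the density-regularity issue and propose a parabolic bootstrap to justify Green's identity; this is a sound refinement of, not a departure from, the paper's argument.
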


\begin{remark}
Notice that this system extends the L-V competition model with mutation by incorporating two non-local additional terms representing vector-borne dispersal. Moreover, the system also keeps track of the uncharged and charged vector populations simultaneously with that of the viruses. Finally, this representation allows us to consider a continuous trait space $\X \subset \R^n$. This structure can be used for example to study the the adaptation of viruses to resistant genes of plants \cite{fabre2009key, fabre2012modelling}.
\end{remark}

\subsection*{Second regime}
For the second regime, under the same assumptions on the sequence of initial measures, the following result is a consequence of Theorem \ref{IDElambdaless}. It shows how the speeding up of the diffusion of vectors has the effect of changing the limiting evolution of the two populations of vectors to an equilibrium state.

\begin{theorem}\label{Theoremlessintro}
Let $\nuvec^{(K)}_0= (\nuv^{(K)}(0),\nuu^{(K)}(0), \nuc^{(K)}(0)) $ be such that:
\begin{equation*}
    \E \left[ \left(\langle 1, \nuv^{(k)}(0) \rangle + \langle 1, \nuu^{(k)}(0) \rangle + \langle 1, \nuc^{(k)}(0) \rangle \right)^3 \right] < \infty 
\end{equation*}
Then, the process $\lbrace \nuvec^{(K)}(t) : t \geq 0 \rbrace$ converges, as $K \to \infty$, to a deterministic process $\xi_t = (\xi_v(t), \xi_u(t), \xi_c(t))$ with densities $(g_v, g_u, g_c)$ being solution of the following system:
\begin{equation}
\left\{ 
\begin{aligned}
\frac{\partial}{\partial t} g_v(t,x,z) &=  (1-\mu) b(x,z) \, g_v(t,x,z)+ \mu \int_\X b(x,z\myprime) m(z^\prime, z) \,  g_v(t,x,z\myprime) \, dz^\prime    \nonumber \\
&- \left[ d(z) + c \, \left( \int_\X g_v(t,x,z^\prime) dz\myprime \right) \right]g_v(t,x,z)   \nonumber \\
&- \left[\int_\D \beta\left(y,x,\int_\X g_v(t,x,z^\prime) dz^\prime,z\right) g_u(t,y) \, dy \right] \, g_v(t,x,z) +\left( \int_{\D} \eta(y,x,z) \,  g_c(t,y,z) \, dy \right), \nonumber \\
\Delta g_u(t,y) &- \left[\int_{ E \times \X} \beta\left(y,x,\int_\X g_v(t,x,z^\prime) dz^\prime,z\right) \,  g_v(t,x,z) \, \zeta_E( dx) \, dz \right]   \, g_u(t,y) + \int_{E \times \X} \eta(y,x,z) \,  g_c(t,y,z) \, \zeta_E( dx) \, dz = 0, \nonumber \\
 \Delta_y g_c(t,y,z) &+ \left[ \int_{ E } \beta\left(y,x,\int_\X g_v(t,x,z^\prime) dz^\prime,z\right) \,  g_v(t,x,z) \, \zeta_E( dx)  \right]   \, g_u(t,y)  -\left(\int_{E} \eta(y,x,z) \, \zeta_E( dx) \right)\,  g_c(t,y,z) = 0,
\end{aligned}
\right.
\end{equation}
with the same boundary conditions as in Theorem \ref{Theoremequalintro}.\\
\end{theorem}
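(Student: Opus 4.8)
The plan is to recover Theorem~\ref{Theoremlessintro} as the specialization of the general convergence statement Theorem~\ref{IDElambdaless} to reflected Brownian motion with $\gamma\equiv 0$, reading off the limit in its weak (measure) form and then upgrading to the strong (density) form; the analytic heart, however, is the fast--slow scaling that renders the two vector equations elliptic, so I would first make that mechanism explicit. Following the methodology of \cite{fournier_microscopic_2004}, I would start from the semimartingale decomposition obtained by applying Dynkin's formula to the cylindrical functionals $\nuvec\mapsto\langle\phi_\alpha,\nu_\alpha\rangle$ for $\alpha\in\lbrace v,u,c\rbrace$ and admissible $\phivec$. Extending the moment estimate of Theorem~\ref{Theoremwelldefined} to bounds uniform in $K$ --- the third-moment hypothesis on $\nuvec^{(K)}_0$ is precisely what is needed, together with the conservation of the total number of vectors from Remark~\ref{Conservvectors} --- yields $\sup_K\E[\sup_{t\le T}\langle 1,\nuvec^{(K)}_t\rangle^3]<\infty$ and controls all the drift and bracket terms below.

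The decisive observation is that, under Assumption~\ref{Assbetakreg2}, both the accelerated diffusion term $K^{1-\lambda}\langle\Lu\phiu,\nuu^{(K)}\rangle$ and the normalized loading/unloading contributions to $\tfrac{d}{dt}\langle\phiu,\nuu^{(K)}\rangle$ are of order $K^{1-\lambda}$, while the associated martingale $M^{K,u}_t$ has predictable bracket of order $K^{1-2\lambda}$. Multiplying the equation by $K^{\lambda-1}$, the left-hand side and the rescaled martingale (whose bracket becomes $O(K^{-1})$) both vanish in $L^2$ as $K\to\infty$, leaving in the limit the stationary identity
\begin{equation*}
0=\langle\Lu\phiu,\xiu(t)\rangle-\int_{\Vp}\int_{\D}\beta(\cdots)\,\phiu(y)\,\xiv(t)(dx,dz)\,\xiu(t)(dy)+\int_{\Vc}\int_{E}\eta(w,x,e)\,\phiu(w)\,\xic(t)(dw,de)\,dx,
\end{equation*}
which is exactly the weak form of the elliptic equation for $g_u$ (and symmetrically for $g_c$). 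The main obstacle is that this is a genuine fast--slow limit: the virus component $\nuv^{(K)}$ is slow and tight in the Skorokhod space $D([0,T];\M_F)$ (by an Aldous--Rebolledo criterion using the $O(1/K)$ martingale brackets and the moment bounds), but the accelerated vector components do \emph{not} converge pathwise. They must instead be controlled through their time integrals $\int_0^{\cdot}\langle\phi_\alpha,\nu_\alpha^{(K)}(s)\rangle\,ds$, which are tight as continuous processes, so that limit points are identified as satisfying the elliptic relation at almost every time. Establishing this averaging --- showing that the fast vector empirical measures self-average to the quasi-stationary profile determined by the current virus configuration --- is where the real work lies.

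To close the identification and pin down the limit as deterministic, I would prove uniqueness for the coupled parabolic--elliptic system. For a fixed slow measure $\xiv(t)$, the two elliptic problems for $\xiu(t)$ and $\xic(t)$ are linear with nonnegative zeroth-order coefficients and a strictly positive total exchange rate, so a Lax--Milgram (or maximum-principle) argument produces a unique weak solution depending continuously on $\xiv(t)$; feeding this back, the virus equation becomes a well-posed nonlocal Lotka--Volterra equation whose uniqueness follows from a Gronwall estimate using the Lipschitz assumptions on $b$ and $\beta$ and the competition constant $c>0$. Uniqueness forces every limit point to coincide with this deterministic solution, upgrading subsequential weak convergence to convergence of the whole sequence.

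Finally, I would specialize to reflected Brownian motion, so that $\Lu=\Lc=\Delta$ on the domains carrying the Neumann condition $\nabla\phi\cdot\vec{n}=0$, and neglect virus deaths on vectors by setting $\gamma\equiv 0$ as announced. Since reflected Brownian motion is reversible, the propagation of absolute continuity of Theorem~\ref{PropAbsol} applies, so the limiting measures admit densities $g_v(t,\cdot),g_u(t,\cdot),g_c(t,\cdot)$ for all $t>0$. Substituting these into the weak identities and integrating by parts --- using precisely the Neumann condition built into $D(\Lu)$ and $D(\Lc)$ to discard the boundary terms --- converts $\langle\Delta\phiu,\xiu(t)\rangle$ into $\int_\D\phiu\,\Delta g_u\,dy$, and testing against all admissible $\phivec$ yields the strong system of Theorem~\ref{Theoremlessintro} together with the homogeneous Neumann conditions $\nabla g_u\cdot\vec{n}=\nabla g_c\cdot\vec{n}=0$ on $\partial\D$; standard elliptic regularity on the $C^3$ domain $\D$ provides the $C^2$ smoothness that makes the pointwise equations meaningful.
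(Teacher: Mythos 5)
Your overall strategy coincides with the paper's: treat $\nuv^{(K)}$ as slow and $(\nuu^{(K)},\nuc^{(K)})$ as fast, observe that the $K^{1-\lambda}$ drift forces the vector equations to become elliptic in the limit, invoke uniqueness, and finally specialize to reflected Brownian motion with $\gamma\equiv 0$ to pass to densities. However, the step you yourself flag as ``where the real work lies'' --- showing that the fast empirical measures self-average to the quasi-stationary profile determined by the current virus configuration --- is precisely the content of the paper's proof of Theorem \ref{IDElambdaless}, and you leave it unproved. The paper resolves it with Kurtz's averaging principle (Theorem \ref{AverageThm}, from \cite{kurtz1992averaging}): it introduces the occupation measures $\Gamma_K([0,t]\times B)=\int_0^t\indic{B}(Y_K(s))\,ds$, verifies compact containment for the slow component and relative compactness of the fast one, identifies the operators $A$ and $B$ through a Taylor expansion of cylindrical functionals, and --- a point your sketch misses entirely --- verifies the countable-domain closure condition \eqref{closurecondition} by a one-point compactification/separability argument. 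That condition is what allows the limiting occupation measure to be disintegrated as $\Gamma(ds\times d(\xiu,\xic))=\Pi_{\xi_v(s)}^{B}(d(\xiu,\xic))\,ds$ with $\Pi_{\xi_v(s)}^{B}$ stationary for the frozen fast generator. Your plan of multiplying the semimartingale identity by $K^{\lambda-1}$ gives the right heuristic, but it does not by itself identify the limit of the mixed term $\int_0^t\int\int\beta\,\phiu\,\nuv^{(K)}(dx,dz)\,\nuu^{(K)}(dy)\,ds$, which couples fast and slow components; since the fast components have no pathwise limit, some occupation-measure (or equivalent) device is indispensable to give meaning to ``the elliptic relation at almost every time,'' and a priori the disintegrated limit could be a mixture rather than a Dirac mass --- ruling this out is exactly what the stationarity-plus-uniqueness argument of the paper accomplishes.

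Second, your appeal to Theorem \ref{PropAbsol} to produce the densities $g_u,g_c$ is a misapplication: that theorem is proved for the parabolic system \eqref{IDElambdaequaleq1}--\eqref{IDElambdaequaleq3} of the first regime (its proof rests on the time-space weak formulation and the parabolic semigroup $P_\L(t-s)$), whereas in the second regime $\xiu(t)$ and $\xic(t)$ do not solve an evolution equation at all --- at each time they solve the elliptic system \eqref{EcuaDefSupp0}--\eqref{EcuaDefSupp1}. The paper derives their absolute continuity by a separate, genuinely elliptic argument: under Assumption \ref{betahatassumption} (in particular $\bar{\beta}(t,y)\geq\beta_0>0$ and zero death rate of viruses on vectors) it solves the dual Neumann problem \eqref{PDERefProb} for a Lebesgue-null set $W$, represents the solution by a Feynman--Kac formula, and uses the absolute continuity of the diffusion's transition kernel to conclude $\xiu(W)=0$; the same argument shows $\Pi_{\nu_v}^{B}$ is a Dirac mass, which is what legitimates speaking of functions $g_u(t,\cdot)$, $g_c(t,\cdot)$ in the statement. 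Your Lax--Milgram suggestion would produce a unique $H^1$ solution, but you would still have to show that every \emph{measure-valued} stationary point of the fast dynamics coincides with that density solution, which is the nontrivial step the paper's dual-PDE argument supplies.
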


\subsection*{Perspectives on extension to a continuous set of plants}
As a final remark, we want to mention that in Theorem \ref{Theoremequalintro} and Theorem \ref{Theoremlessintro} we have deliberately used the notation:
\begin{equation*}
    \int_E f(x) \, \zeta_E(dx),
\end{equation*}
instead of the perhaps more natural:
\begin{equation*}
    \sum_{x_i \in E} f(x_i).
\end{equation*}
Our intention with this choice was to suggest in particular the possible extension of our results to the case in which the set of plants becomes increasingly large together with the rescaling parameter $K$. To be more precise, when the locations of plants form and increasing sequence of lattices $E_K$ approximating the whole plantation space $\D$ (and  the measures $\zeta_{E_K}$ approximating the Lebesgue measure $dx$) as $K \to \infty$. We suspect that Theorems \ref{IDElambdaequal}
and \ref{IDElambdaless} can be extended to that setting. 

\subsection*{Local persistence for the first regime}
As a small application of Theorem \ref{Theoremequalintro}, in this section we give sufficient conditions for the local persistence of the virus population of a specific virus trait on an given plant, i.e., conditions that guarantee that:
\begin{equation*}
\liminf_{t \to \infty } g_v(t,x,z) >0, 
\end{equation*}
for $x \in E$, and $ z \in X$.\\

Let us introduce the following quantity:
\begin{equation*}
    R(x,z) = (1-\mu) b(x,z)  -  d(z) -\int_\D \beta(y,x,z)  \, dy
\end{equation*}
we then have the following:
\begin{proposition}\label{PersCond}
Let $g_v(t,\cdot,\cdot)$ be given as in Theorem \ref{Theoremequalintro} with initial condition:
\begin{equation}\label{condongvcero}
    g_v(0,x,z) >0.
\end{equation}
Assume that, for some fixed $x \in E$, and $ z \in X$ we have:
\begin{equation}\label{condonR}
    R(x,z) > 0.
\end{equation}
Then the virus population of trait $z$ locally persists at $x$.
\end{proposition}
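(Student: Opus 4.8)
The plan is to reduce the persistence of the single density $g_v(\cdot,x,z)$, at the fixed plant location $x$ and trait $z$, to a scalar differential inequality of logistic type, and then to compare with an explicit sub-solution. Throughout, $x$ and $z$ are frozen and all the other arguments of $g_v, g_u, g_c$ are regarded as (nonnegative) time-dependent coefficients. First I would establish positivity: for fixed $(x,z)$ the first equation of the system in Theorem \ref{Theoremequalintro} has the form
\begin{equation*}
\frac{\partial}{\partial t} g_v(t,x,z) = A(t)\, g_v(t,x,z) + G(t),
\end{equation*}
where $G(t) := \mu \int_\X b(x,z')m(z',z) g_v(t,x,z')\,dz' + \int_\D \eta(y,x,z) g_c(t,y,z)\,dy \geq 0$ collects the mutation and unloading (influx) terms, and $A(t)$ is bounded on every bounded time interval. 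By the variation-of-constants formula together with $g_v(0,x,z)>0$ from \eqref{condongvcero}, this gives $g_v(t,x,z)>0$ for every $t\geq 0$; what remains is to make this lower bound uniform in time.

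The next step is to produce uniform a priori bounds on the coefficients entering $A(t)$. Integrating the $g_u$-- and $g_c$--equations over $\D$ and $\D\times\X$ respectively and using the Neumann boundary conditions (so the Laplacian terms integrate to zero), the loading and unloading contributions cancel, recovering at the level of the limit the conservation of total vector mass of Remark \ref{Conservvectors}:
\begin{equation*}
\int_\D g_u(t,y)\,dy + \int_{\D\times\X} g_c(t,y,z)\,dy\,dz = M_0
\end{equation*}
for all $t\geq 0$. Since $\beta$ and $\eta$ are bounded (Assumption \ref{AssPherates}), this $L^1$ control bounds the loading term $\int_\D \beta(\cdot)\, g_u\,dy$ uniformly in $t$. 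I would then integrate the $g_v$--equation over $z\in\X$; since $m(z',\cdot)$ is a probability density the mutation term telescopes, and one obtains a scalar logistic inequality $\frac{d}{dt}N_x(t) \leq \bar{b}\, N_x(t) - c\, N_x(t)^2 + S$ for $N_x(t) = \int_\X g_v(t,x,z')\,dz'$, with $S$ controlled by the conserved mass $M_0$. Comparison with the corresponding scalar ODE yields a uniform bound $\sup_{t\geq 0} N_x(t) \leq \bar{N} < \infty$, hence $c\,N_x(t) \leq c\bar{N}$.

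Finally, discarding the two nonnegative terms in $G(t)$ gives the sub-solution inequality
\begin{equation*}
\frac{\partial}{\partial t} g_v(t,x,z) \geq \Big[(1-\mu)b(x,z) - d(z) - c\, N_x(t) - \int_\D \beta\big(y,x,N_x(t),z\big)\, g_u(t,y)\,dy\Big]\, g_v(t,x,z),
\end{equation*}
and the objective is to show the bracket is eventually bounded below by a positive constant, whereupon Gronwall yields $\liminf_{t\to\infty} g_v(t,x,z)>0$. This is exactly where the hypothesis $R(x,z)>0$ must enter, and it is the main obstacle: the bracket differs from $R(x,z)$ by the competition term $-c\,N_x(t)$ (which is absent from $R$) and by the fact that the loading term carries the extra weight $g_u(t,y)$. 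I expect the crux to be controlling these two discrepancies, i.e. showing that along trajectories the competition and loading flux felt by trait $z$ are compatible with the competition-free, $g_u$-free quantity $R(x,z)$. The cleanest route is a uniform-persistence / average-Lyapunov argument rather than a crude pointwise bound: one shows that whenever $g_v(\cdot,x,z)$ dips toward $0$ the realized competition $c\,N_x$ and loading flux are correspondingly small, so that the instantaneous per-capita growth rate stays close to $R(x,z)>0$, forcing a rebound and precluding $\liminf = 0$.
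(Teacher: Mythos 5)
There is a genuine gap, and you have located it yourself but not closed it: your final bracket contains $-c\,N_x(t)$ and the loading term weighted by $g_u(t,y)$, and the hypothesis $R(x,z)>0$ controls neither of these. Your proposed rescue --- ``whenever $g_v(\cdot,x,z)$ dips toward $0$ the realized competition $c\,N_x$ and loading flux are correspondingly small'' --- is not true in general: $N_x(t)=\int_\X g_v(t,x,z')\,dz'$ aggregates \emph{all} traits at the plant $x$, so it can stay large (or even grow) while the single density $g_v(t,x,z)$ tends to zero; likewise your conservation law only gives $L^1$-control of $g_u(t,\cdot)$, not the pointwise bound $g_u(t,y)\le 1$ that would be needed to compare $\int_\D \beta(y,x,\cdot,z)\,g_u(t,y)\,dy$ with the term $\int_\D\beta(y,x,z)\,dy$ appearing in $R(x,z)$. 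The ``uniform-persistence / average-Lyapunov argument'' is only named, not executed, and it is exactly the hard part; as written, the differential inequality cannot be closed from $R(x,z)>0$ alone.

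The paper avoids this obstacle entirely because its notion of local persistence is read off from a \emph{linearization}, not from the nonlinear system. One linearizes the system of Theorem \ref{Theoremequalintro} around the virus-free equilibrium $(g_v^*,g_u^*,g_c^*)=(0,1,0)$; this is precisely why $R(x,z)$ contains no competition term (it is quadratic in the perturbation) and why the loading flux enters with weight one (it is $\beta$ integrated against $g_u^*\equiv 1$). The linearized $h_c$-equation is then solved by Duhamel's formula using the fundamental solution $p(t,y,y';z)$ of the heat equation with killing rate $\int_E\eta(y,x,z)\,\zeta_E(dx)$ (citing \cite{sato1965multi}); taking $h_c(0,\cdot,\cdot)=0$ and substituting into the $h_v$-equation yields the closed equation \eqref{eqnequihv}, whose right-hand side is positive by \eqref{condongvcero} and \eqref{condonR}, so the perturbation $h_v$ is monotonically increasing. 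In other words, the paper proves linear instability of the extinction state and interprets that as local persistence, whereas you are attempting the stronger, genuinely nonlinear statement $\liminf_{t\to\infty}g_v(t,x,z)>0$; your preliminary steps (positivity by variation of constants, vector-mass conservation in the limit, the logistic bound on $N_x$) are sound, but the decisive step is missing, and it cannot be supplied by the stated hypotheses without the linearization (or substantially stronger assumptions).
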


\subsection*{Organization of the paper}
The rest of our paper is organized as follows. In Section \ref{ExtinctionSect} we provide the proof of Theorem \ref{ExtincTheoremintro}. In Section \ref{Mainresultssect} we state our main theorems, namely Theorem \ref{IDElambdaequal} and Theorem \ref{IDElambdaless}, together with the details on the derivation of Theorem \ref{Theoremequalintro}, and Theorem \ref{Theoremlessintro}, from Theorem \ref{IDElambdaequal}. Section \ref{ProofLocPer} contains the details of the derivation of Proposition \ref{PersCond}. In Section \ref{Proofs} we prove our main theorems using the standard compactness-uniqueness approach, in particular Section \ref{firstproof} deals with the details of the proof of Theorem \ref{IDElambdaequal}, while Section \ref{secondproof} deals with the proof of Theorem \ref{IDElambdaless} with the help of Kurtz' averaging principle for slow-fast systems. Finally, in the Appendix we include the rigorous definition of the processes on path-space. There, we also include the derivation of some standard martingale properties that are used in our proofs. 




\section{Extinction probabilities}\label{ExtinctionSect}
We are interested in the probability of extinction of the virus population, i.e., the probability
\begin{equation*}
    \P_{\nu(0)} ( \exists s >0,  P_v(s) = 0),
\end{equation*}
where the process is initially started from the measure $\nuvec(0)= (\nuv(0),\nuu(0),\nuc(0))$.\\

We first make the following remarks:
\begin{remark}
Notice that the random variable $P_v(t)$ does not make distinction about the different phenotypes. Moreover, it also takes into account the sub-population of viruses residing on vectors. This is needed to properly study extinction since it avoids the possibility of re-emergence of the virus population.
\end{remark}

\begin{remark}
The random process $\lbrace P_v(t) \rbrace_{t \geq 0}$ is integer valued, in fact it is $( \N \cup \lbrace 0 \rbrace)$-valued. Moreover, zero is an absorbing state, i.e, if for some $s \geq 0$, $P_v(s)=0$, then $P_v(t)=0$ for all $t \geq s$.
\end{remark}

\begin{remark}
Notice that by Remark \ref{Conservvectors} we have:
\begin{equation*}
    N_u(t) + N_c(t) =: V_0
\end{equation*}
for all $t \geq 0$.
\end{remark}


Before proving Theorem \ref{ExtincTheoremintro}, we need the following lemma:
\begin{lemma}\label{inequalarren}
Let $\nu_v \in \M_p( \Vp)$, then we have:
\begin{equation}\label{ineqecuaarren}
 \int_{\Vp} \left[  \int_{\X} 1 \cdot  \nuv^{x}(t)(dz\myprime)  \right] \nuv(t)(dx,dz) \geq \frac{1}{\lvert E\rvert} N_v(t)^2,
\end{equation}
where $\lvert E\rvert$ denotes the cardinality of the finite set $E$.
\end{lemma}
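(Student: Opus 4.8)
The plan is to recognise both sides as quadratic expressions in the per-plant occupation numbers and to conclude with a single application of the Cauchy--Schwarz inequality over the finite set $E$.

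First I would unpack the inner integral. For a fixed position $x \in E$, the restriction $\nuv^x$ is the part of $\nuv$ carried by $\{x\} \times \X$, so that
\[
\int_{\X} 1 \cdot \nuv^x(dz') = \langle \nuv^x, 1 \rangle =: N_x,
\]
the total number of viruses sitting on the plant located at $x$. The key observation is that this quantity depends only on the spatial coordinate $x$ and not on the trait variable $z$ over which the outer integration runs.

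Next I would integrate this function against $\nuv$. Writing $\nuv = \sum_{x \in E} \nuv^x$, which is a finite sum since $E$ is finite and $\nuv$ is a finite point measure, and using that the integrand $(x,z) \mapsto N_x$ is constant in $z$ on each fibre $\{x\} \times \X$, the left-hand side of \eqref{ineqecuaarren} collapses to
\[
\int_{\Vp} N_x \, \nuv(dx,dz) = \sum_{x \in E} N_x \, \langle \nuv^x, 1 \rangle = \sum_{x \in E} N_x^2 .
\]
Since $N_v = \langle \nuv, 1 \rangle = \sum_{x \in E} N_x$, the statement reduces to proving $\sum_{x \in E} N_x^2 \ge |E|^{-1} \bigl( \sum_{x \in E} N_x \bigr)^2$. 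This last inequality is precisely Cauchy--Schwarz applied to the vectors $(N_x)_{x \in E}$ and $(1)_{x \in E}$ in $\R^{|E|}$, namely
\[
\Bigl( \sum_{x \in E} N_x \Bigr)^2 = \Bigl( \sum_{x \in E} N_x \cdot 1 \Bigr)^2 \le \Bigl( \sum_{x \in E} N_x^2 \Bigr) \Bigl( \sum_{x \in E} 1 \Bigr) = |E| \sum_{x \in E} N_x^2 ,
\]
which rearranges to the claimed bound.

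I expect no genuine obstacle here: the content is a one-line convexity estimate, and the only point requiring care is the measure-theoretic bookkeeping, namely verifying that the inner integral depends on $x$ alone so that the double integral genuinely collapses to $\sum_{x \in E} N_x^2$ rather than producing spurious cross terms. Equivalently, one can phrase the same step through Jensen's inequality for the uniform probability measure on $E$, which renders the appearance of the factor $1/|E|$ transparent and makes clear that equality holds exactly when the viruses are equidistributed among the plants.
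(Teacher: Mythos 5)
Your proof is correct and follows essentially the same route as the paper: both reduce the double integral to the sum of squared per-plant virus counts $\sum_{x \in E} N_x^2$ and then apply the inequality $|E| \sum_x N_x^2 \geq \left( \sum_x N_x \right)^2$ (the paper's ``fundamental inequality'', which is your Cauchy--Schwarz step with the all-ones vector). The only cosmetic difference is that the paper phrases the collapse of the integral via the marginal measure $\bar{\nu}_v$ on $E$ rather than via the fibre decomposition $\nu_v = \sum_{x} \nu_v^x$, but the content is identical.
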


\begin{proof}
For $\nuv \in \M_p( \Vp)$, let us denote by $\bar{\nu}_v  \in \M_p(E)$ the following measure:
\begin{equation*}
    \bar{\nu}_v(dx) = \int_{X} 1 \, \nu_v(dx,dz). 
\end{equation*}
Notice then that we can rewrite the LHS of \eqref{ineqecuaarren} as follows:
\begin{equation*}
 \int_{E \times \X} \left[  \int_{\X} 1 \cdot  \nu_v^{x}(t)(dz\myprime)  \right] \nu_v(t)(dx,dz) =  \int_{E}  \int_{E} \indic{x}(x\myprime) \cdot  \bar{\nu}_v^{x}(t)(dx\myprime) \,    \bar{\nu}_v^{x}(t)(dx)
\end{equation*}
We conclude the proof of the lemma by using the fact that the set $E$ is finite, and the fundamental inequality:
\begin{equation*}
    n \sum_{i=1}^n  a_i^2 \geq \left(\sum_{i=1}^n  a_i\right)^2 
\end{equation*}
for $a_i \geq 0$.
\end{proof}

We now can proceed to the proof of Theorem \ref{ExtincTheoremintro}.

\begin{proof}
Let us first claim:
\begin{equation}\label{step1}
    \sup_{t\geq 0} \, \E \left[ P_v(t) \right] < \infty.
\end{equation}
To see that this is the case, let us define $f(t)$ as follows:
\begin{equation*}
    f(t) := \E \left[ P_v(t) \right]= \E \left( \langle \nu_v(t) , 1 \rangle + \langle \nuc(t) , 1 \rangle \right).
\end{equation*}
By Proposition \ref{MartingalesProp} we have
\begin{align*}
    f(t) &= f(0) + \int_0^t \E \left[ \int_{\Vp} b(x,z)  \nu_v(s)(dx,dz) - \int_{\Vp} \left( d(z) + c \langle \nu_v^{x}, 1\rangle \right)  \nu_v(s)(dx,dz) \right] ds, \nonumber \\
\end{align*}
and as a consequence we obtain the differentiability of $f$. Moreover, we also have:
\begin{equation*}
    f\myprime(t) \leq (\bar{b}-\hat{d}) f(t) - c \cdot \E \left[ \int_{\Vp}   \langle \nu_v^{x}(t), 1\rangle  \nu_v(t)(dx,dz) \right].
\end{equation*}
where $\bar{b}$ is given in Assumption \ref{AssPherates}, and $\hat{d}:= \inf_{z \in \X} d(z)$.\\

By Jensen's inequality we have:
\begin{align*}
    f(t)^2 &\leq \E \left[N_v(t)^2 + 2 N_c(t) N_v(t) + N_c(t)^2 \right] \nonumber \\
    &\leq \E \left[N_v(t)^2 + 2 V_0 \right( N_v(t) + N_c(t) \left) \right] \nonumber
\end{align*}
which can be rewritten as:
\begin{equation*}
    \E \left[N_v(t)^2 \right] \geq  f(t)^2 - 2 V_0 f(t), 
\end{equation*}
This, together with Lemma \ref{inequalarren}, implies
\begin{equation*}
    f\myprime(t) \leq \left( (\bar{b}-\hat{d}) + \frac{2 c V_0}{\lvert E\rvert} \right) f(t) - \frac{c}{\lvert E\rvert} f(t)^2.
\end{equation*}
From the observation that the function:
\begin{equation*}
    y(x) = \left(  (\bar{b}-\hat{d}) + \frac{2 c V_0}{\lvert E\rvert} \right) x - \frac{c}{\lvert E\rvert} x^2
\end{equation*}
is negative for any $x$ such that
\begin{equation*}
    x \geq x_0 := \lvert E\rvert \frac{ (\bar{b}-\hat{d})}{c}+ 2V_0, 
\end{equation*}
we deduce that
\begin{equation*}
    f(t) \leq f(0) \vee x_0
\end{equation*}
for all $t \geq 0$. This implies \eqref{step1}.\\

Now we claim that:
\begin{equation*}
    \lim_{t \to \infty} \,  P_v(t)   \in \lbrace 0, \infty \rbrace.
\end{equation*}
Using the fact that $P_v(t)$ is $\N$-valued, it is enough to check that for any $M \in \N$ we have
\begin{equation*}
    \P \left[ \liminf_{t \to \infty} \,  P_v(t) = M  \right] = 0 .
\end{equation*}
Assume that it is not the case, and that $\liminf_{t \to \infty} \,  P_v(t) = M $. By definition of $\liminf$, this implies that $P_v(t)$ reaches the value $M$ infinitely often, but the value $M-1$ only a finite number of times. However, this is almost surely impossible since every time that  the process $P_v$ is at state $M$, the probability of going to state $(M-1)$ is bounded from below by:
\begin{equation*}
    \frac{\hat{d} M}{\bar{b}M + \bar{d}M + c M^2 \bar{\gamma}V_0} > 0,
\end{equation*}
where we recall that $\hat{d}>0$. We only have to use the fact that $\lbrace 0 \rbrace$ is an absorbing state to deduce  that the limit exists and 
\begin{equation*}
    \lim_{t \to \infty } P_v(t) \in \lbrace 0, \infty \rbrace.
\end{equation*}
To conclude that a.s.
\begin{equation*}
    \lim_{t \to \infty} \,  P_v(t)  = 0,
\end{equation*}
 it is enough to show that
\begin{equation*}
    \E \left[ \lim_{t \to \infty}  P_v(t) \right] < \infty.
\end{equation*}
However, this is a consequence of Fatou's lemma, expression \eqref{step1}, and the following reasoning:
\begin{equation*}
      \E \left[ \lim_{t \to \infty}  P_v(t) \right]  =     \E \left[ \liminf_{t \to \infty}  P_v(t) \right]  \leq      \liminf_{t \to \infty} \, \E \left[  P_v(t) \right]  \leq      \sup_{t \geq 0} \, \E \left[  P_v(t) \right]< \infty.
\end{equation*}
\end{proof}

\section{IDE formulation}\label{Mainresultssect}
Let us introduce some additional notation needed to introduce the IDE formulations of Theorem \ref{Theoremequalintro} and Theorem \ref{Theoremlessintro}. 

\subsection*{Rescaled processes}
Let us denote by $\Lambda_K(t)$ the measure-valued process of the form:
\begin{equation}\label{Lambdakdef}
  \Lambda_K(t) = (\nuv^{(K)}(t), \nuu^{(K)}(t), \nuc^{(K)}(t) ),
\end{equation}
where $\nuv^{(K)}(t), \nuu^{(K)}(t)$, and $\nuc^{(K)}(t)$  are given as in \eqref{lambdak2}. We consider $\Lambda_K(t)$ as a process taking values in the product space of measures $\M_F$ given by:
 \begin{equation*}
     \M_F := \M_F(\Vp) \times \M_F(\D) \times \M_F(\Vc)
 \end{equation*}
 
Under Assumption \ref{Assbetakreg2}, the process  $\Lambda_K(t)$ has an infinitesimal  generator, that we denote by $\L^{(K)}$, with jump part $ \L_1^{(K)}$ given by:
\begin{align*}
 \L_1^{(K)} F_{\phivec}(\nuvec) &= K (1-\mu) \int_{\Vp} b(x,z) \left[F_{\phivec}( (\nuv + \tfrac{1}{K}\delta_{x,z}, \nuu, \nuc)) -F_{\phivec}(\nuvec) \right] \nuv(dx,dz) \nonumber \\
  &+ K\, \mu \int_{\Vp} b(x,z) \int_{\X} m(z,e) \, \left[F_{\phivec}( (\nuv + \tfrac{1}{K} \delta_{x,e}, \nuu, \nuc)) -F_{\phivec}(\nuvec) \right] \nuv(dx,dz) \, de \nonumber \\
  &+ K \int_{\Vp} (d(z)+ c \langle \nu_v^{x},1 \rangle ) \, \left[F_{\phivec}( (\nuv - \tfrac{1}{K}\delta_{x,z}, \nuu, \nuc)) -F_{\phivec}(\nuvec) \right] \nuv(dx,dz) \nonumber \\
  &+ K \int_{\Vp }\int_{D} \beta(t,y,x,N_x(t),z) \, \left[F_{\phivec}( (\nuv - \tfrac{1}{K}\delta_{x,z}, \nuu-\tfrac{1}{K^{\lambda}}\delta_y, \nuc+\tfrac{1}{K^{\lambda}}\delta_{y,z})) -F_{\phivec}(\nuvec) \right] \nuv(dx,dz) \, \nuu(dy) \nonumber \\
  &+ K \int_{\Vc} \int_{E} \eta(t,w,x,e) \, \left[F_{\phivec}( (\nuv + \tfrac{1}{K}\delta_{x,e}, \nuu+\tfrac{1}{K^\lambda}\delta_w, \nuc-\tfrac{1}{K^\lambda}\delta_{w,e})) -F_{\phivec}(\nuvec) \right] \nuc(dw,de) \, dx \nonumber \\
  &+ K \int_{\Vc} \gamma(e) \, \left[F_{\phivec}( (\nuv, \nuu+\tfrac{1}{K^\lambda}\delta_w, \nuc-\tfrac{1}{K^\lambda}\delta_{w,e})) -F_{\phivec}(\nuvec) \right] \nuc(dw,de), 
\end{align*}
and a diffusive part:
\begin{align*}
\L_2^{(K)} F_\phivec(\nuvec)  &:=K^{1-\lambda}\left(\int_{\D} \L_y^u \phiu(y) \nuu(dy)+\int_{\Vc} \Lc \phic(w,e) \nuc(dw,de) \right) \,  F_\phivec\myprime(\nuvec) \nonumber \\
&+K^{1-2\lambda}\left( \int_{\D} \frac{(\sigma^u(y))^2}{2} |\nabla_y \phiu(y) |^2  \nuu(dy) +\int_{\Vc} \frac{(\sigma^c(w))^2}{2} |\nabla_w \phic(w,e) |^2  \nuc(dw,de) \right)\, F_\phivec\mydprime(\nuvec).
\end{align*}

\subsection*{Population at time zero}
At time zero we consider a sequence of initial measures $\left\{ \Lambda^K(0) \right\}_{\{ k \geq 1 \}} \in \M_F$ of the form:
\begin{equation*}
\Lambda^{(K)}(0) = ( \nuv^{(K)}(0), \nuu^{(K)}(0), \nuc^{(K)}(0)  ).
\end{equation*}
We assume that this measure satisfies an estimate like the one needed in Proposition \ref{Propcontrolp}, and that a law of large numbers is satisfied by the sequence $\left\{ \Lambda^K(0) \right\}_{\{ k \geq 1 \}}$. More precisely we make the following assumption:
\begin{assumption}\label{HydroAssump}
The sequence of measures $\left\{  \Lambda^K(0) \right\}_{\{ K \geq 1 \}}$ converges in law and for the weak topology of $\M_F$ to a deterministic finite measure $\xi_0 = (\xi_v(0),\xi_S(0),\xi_I(0)) \in \M_F.$ Moreover we have the following estimate:
\begin{equation*}
   \sup_{K \in \N} \left(  \E \left[ \langle  \nu_\alpha^K(0), 1 \rangle^3 \right] \right) < \infty,
\end{equation*}
for all $\alpha \in \lbrace v, u, c \rbrace$.
\end{assumption}
The IDE formulation, given by Theorem \ref{IDElambdaequal} below, can be thought of as a law of large numbers, for the limit of the sequence of measures $\left\{\Lambda_K(t): t \geq 0 \right\}_{K \geq 1}$, seen as taking values in the path space $\mathbb{D}([0,T], \M_F)$, for all $T>0$. Obtaining such a description requires to work with martingales associated  to a Markov process and to control their quadratic variation. We will split the analysis of the martingales the two cases already described in Section \ref{IntroSect}.

\begin{remark}
It is possible to find combinations of rescaling of parameters, different from those of Assumption \ref{Assbetakreg2}, such that the speeding of diffusion by vectors becomes unnecessary to derive a limiting description for the case $\lambda <1$. However, for such a combination, the limiting evolution equation for the population of viruses decouples from that of the vectors (i.e. viruses do not see the effect of vectors). Such a description becomes biologically irrelevant in the context of vector-borne dynamics.
\end{remark}

\subsection{IDE formulation: first case}

 We have the following theorem:
\begin{theorem}\label{IDElambdaequal}
Let $\lambda=1$. Consider the sequence of measure-valued processes $\left\{\Lambda_K \right\}_{K \geq 1}$ given by \eqref{Lambdakdef}-\eqref{lambdak2}. Suppose Assumptions  \ref{Assbetakreg2}-\ref{HydroAssump} are satisfied. Then, for all $T>0$, the sequence of processes $\left\{\Lambda_K \right\}_{K \geq 1}$ converges in law in $\mathbb{D}([0,T], \M_F)$ to a deterministic continuous function $\xi(t) = \left( \xiv(t), \xiu(t), \xic(t) \right)$ belonging to the path space $\mathbb{C}([0,T], \M_F)$, and solving the following integro-differential equations:
\begin{align}\label{IDElambdaequaleq1}
\langle \xiv(t) , \phiv\rangle &=  \langle \xiv(0) , \phiv\rangle +\int_0^t \int_{\Vp} b(x,z) \phiv(x,z) \, \xiv(s)(dx,dz) \,  ds \nonumber \\
&- \mu \int_0^t \int_{\Vp} b(x,z)\left[ \phiv(x,z) -\int_{ \X} m(z,e) \phiv(x,e) \, de  \right] \xiv(s)(dx,dz) \,  ds \nonumber \\
&-  \int_0^t  \int_{\Vp}  \left[ d(z) + c \langle \xiv(s)^x, 1\rangle \right] \phiv(x,z) \xiv(s)(dx,dz) ds \nonumber \\
&- \int_0^t  \int_{\Vp \times \D} \beta(s,y,x,\langle \xiv(s)^x, 1\rangle,z) \phiv(x,z) \, \xiv(s)(dx,dz) \,  \xiu(s)(dy) \, ds \nonumber \\
&+ \int_0^t   \int_{\Vc\times E} \eta(s,y,x,z) \phiv(x,z)  \,  \xic(s)(dy,dz) \, dx \,   ds,
\end{align}
\begin{align}\label{IDElambdaequaleq2}
\langle \xiu(t) , \phiu\rangle &=  \langle \xiu(0) , \phiu\rangle +\int_0^t \int_{\D} \mathcal{L}^u \phiu(y) \, \xiu(s)(dy) \, ds \nonumber \\
&-  \int_0^t  \int_{\Vp \times \D} \beta(s,y,x,\langle \xiv(s)^x, 1\rangle,z) \phiu(y) \xiv(s)(dx,dz) \,  \xiu(s)(dy) \, ds \nonumber \\
&+\int_0^t   \int_{\Vc \times E} \eta(s,y,x,z) \phiu(y) \xic(s)(dy,dz) \,  dx \,   ds + \int_0^t \int_{\Vc} \gamma(z) \phiu(y) \, \xic(s)(dy,dz) ds, 
\end{align}
\begin{align}\label{IDElambdaequaleq3}
\langle \xic(t) , \phic\rangle&= \langle \xic(0) , \phic\rangle +\int_0^t \int_{\Vc} \mathcal{L}^c \phic(y,z) \, \xic(s)(dy,dz) \, ds \nonumber \\
&+ \int_0^t  \int_{\Vp \times \D} \beta(s,y,x,\langle \xiv(s)^x, 1\rangle,z) \phic(y,z) \xiv(s)(dx,dz) \,  \xiu(s)(dy) \, ds \nonumber \\
&-  \int_0^t   \int_{\Vp \times E} \eta(s,y,x,z) \phic(y,z) \xic(s)(dy,dz) \,  dx \,   ds -\int_0^t \int_{\Vc} \gamma(z) \phic(y,z) \, \xic(s)(dy,dz) \,  ds,
\end{align}
for all $\phiv $ bounded and measurable, $\phiu \in \C_b^2(\D) \cap \D(\L^u)$, and $\phic \in \C_b^2(\I) \cap D(\L^c)$.
\end{theorem}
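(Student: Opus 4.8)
The plan is to follow the classical compactness-uniqueness scheme for measure-valued processes developed in \cite{fournier_microscopic_2004}. First I would write the semimartingale decomposition of the rescaled process. Applying the generator $\L^{(K)}$ to the linear cylindrical functional obtained by taking $F=\mathrm{id}$ in Definition \ref{CyliDef}, one gets, for each coordinate $\alpha \in \lbrace v,u,c\rbrace$ and each admissible test function, a decomposition of the form
\[
\langle \nu_\alpha^{(K)}(t), \phi_\alpha\rangle = \langle \nu_\alpha^{(K)}(0), \phi_\alpha\rangle + \int_0^t \Psi_\alpha^{(K)}(s)\,ds + M_\alpha^{(K)}(t),
\]
where $\Psi_\alpha^{(K)}$ gathers the drift terms produced by $\L_1^{(K)}$ and $\L_2^{(K)}$ and $M_\alpha^{(K)}$ is a square-integrable martingale (the rigorous martingale properties are those recorded in the Appendix). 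Since $\lambda=1$, the prefactors $K^{1-\lambda}=1$ and $K^{1-2\lambda}=K^{-1}$ in $\L_2^{(K)}$, together with the weight $K$ and the jump sizes $1/K$ in $\L_1^{(K)}$, make $\Psi_\alpha^{(K)}(s)$ formally match the integrands on the right-hand sides of \eqref{IDElambdaequaleq1}--\eqref{IDElambdaequaleq3}.

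Second, I would show the martingales are asymptotically negligible. Taking $F(x)=x^2$ in the generator computation yields an explicit formula for the predictable quadratic variation $\langle M_\alpha^{(K)}\rangle_t$; each contribution carries a factor of order $1/K$, arising as a squared jump size $(1/K)^2$ times a rate of order $K$ (the diffusive contribution from $\L_2^{(K)}$ carries the same $K^{-1}$ factor). Invoking the uniform third-moment bound of Proposition \ref{Propcontrolp} and Assumption \ref{HydroAssump}, one obtains $\E[\langle M_\alpha^{(K)}\rangle_T] = O(1/K)$, and Doob's inequality then forces $\sup_{t\le T}|M_\alpha^{(K)}(t)|\to 0$ in $L^2$. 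For tightness of $\lbrace \Lambda_K\rbrace$ in $\mathbb{D}([0,T],\M_F)$, I would first establish tightness of the real-valued processes $\langle \nu_\alpha^{(K)}, \phi_\alpha\rangle$ for $\phi_\alpha$ ranging over a dense set, via the Aldous--Rebolledo criterion: the quadratic-variation bound controls the martingale part while the uniform moment estimates control the increments of the finite-variation part $\int_0^\cdot \Psi_\alpha^{(K)}\,ds$. Combined with the uniform control of total mass, this lifts to tightness of the measure-valued processes.

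Finally, identification and uniqueness. Along any convergent subsequence, the limit $\xi=(\xiv,\xiu,\xic)$ is continuous, since the jumps have size $O(1/K)\to 0$; passing to the limit in the semimartingale decomposition (the martingale vanishing by the previous step) shows that $\xi$ solves \eqref{IDElambdaequaleq1}--\eqref{IDElambdaequaleq3}. I would then prove uniqueness of the solution to this IDE system by a Gronwall estimate on the total-variation distance between two solutions, using the uniform bounds on $b,d,\eta,\gamma$ and the Lipschitz continuity of $\beta$ in Assumptions \ref{AssPherates} and \ref{Assbetakreg2}. Uniqueness of the limit, together with relative compactness, upgrades the subsequential convergence to convergence in law of the full sequence.

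The main obstacle I anticipate is passing to the limit in the nonlinear and non-local terms: the quadratic competition contribution $c\,\langle \xiv(s)^x,1\rangle$ and the bilinear loading term $\beta\bigl(s,y,x,\langle \xiv(s)^x,1\rangle,z\bigr)$ integrated against the product $\xiv(s)\otimes\xiu(s)$. Weak convergence of measures does not by itself pass through such products of integrals, so the argument must exploit the Lipschitz dependence of $\beta$ on the local mass $N_x$, the continuity of the relevant functionals, and the uniform moment control of Proposition \ref{Propcontrolp} to justify the limit. The asymmetry in the admissibility of test functions (with $\phiv$ merely bounded and measurable, while $\phiu,\phic$ must lie in the domains of $\L^u,\L^c$) reflects the absence of a diffusive part for the viral population and should not cause additional difficulty.
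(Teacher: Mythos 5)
Your overall scheme --- semimartingale decomposition obtained by applying $\L^{(K)}$ to linear functionals, predictable quadratic variations of order $1/K$, Doob's inequality to kill the martingale parts, Aldous-type tightness combining the quadratic-variation bound with the uniform moment estimates, continuity of limit points from the $O(1/K)$ jump sizes, identification along subsequences, and uniqueness to upgrade to convergence of the full sequence --- is exactly the paper's compactness-uniqueness argument. The paper carries out the identification step you sketch by showing that the limiting functionals $M_t^{\alpha,\phi_\alpha}$ have vanishing expectation: the bound $\E\bigl[|M_K^{\phi_\alpha}(t)|^2\bigr]=\E\bigl[\langle M_K^{\phi_\alpha}\rangle_t\bigr]\le C/K$ plus uniform integrability of $M_t^{\alpha,\phi_\alpha}(\nu^{(K)})$ (supplied by the third-moment estimate \eqref{unifoverKontrol}) lets one pass expectations to the limit along convergent subsequences; this is the rigorous form of the ``continuity of the relevant functionals plus moment control'' you invoke for the nonlinear and bilinear terms, so that obstacle is handled as you anticipated.

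There is, however, one genuine gap: your uniqueness step, as described, fails for the vector components. A Gronwall estimate on $\sup_{\|\phi\|_\infty\le 1}|\langle \xi_\alpha(t)-\bar\xi_\alpha(t),\phi\rangle|$ works for $\alpha=v$ because every term in \eqref{IDElambdaequaleq1} involves the test function only through bounded kernels. But \eqref{IDElambdaequaleq2}--\eqref{IDElambdaequaleq3} contain the drift terms $\int_0^t\int \L^u\phi_u\,\xi_u(s)(dy)\,ds$ and $\int_0^t\int \L^c\phi_c\,\xi_c(s)(dy,dz)\,ds$: the test functions over which the variation norm takes its supremum are merely bounded and measurable, hence not in $D(\L^u)$ or $D(\L^c)$, and even for smooth $\phi$ the quantity $\L^u\phi$ is not dominated by $\|\phi\|_\infty$, so these terms cannot be absorbed into a Gronwall inequality for the variation distance. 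The paper resolves this (following Theorem 4.2 of \cite{champagnat_invasion_2007}) by a semigroup, or mild-formulation, trick: for fixed $t$ one tests the weak time-space formulation against $\phi_\alpha(s,\cdot)=P_\alpha(t-s)\phi_\alpha$, where $P_\alpha$ is the semigroup generated by $\L^\alpha$; the combination $\partial_s\phi_\alpha+\L^\alpha\phi_\alpha$ vanishes, the generator disappears from the equation, and since $\|P_\alpha(t-s)\phi\|_\infty\le\|\phi\|_\infty$ the remaining terms close a Gronwall loop in the sum of the three variation distances. Without this (or an equivalent duality device), your uniqueness argument does not go through --- and uniqueness is precisely what converts subsequential convergence into convergence in law of the whole sequence, so the gap propagates to the final conclusion.
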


We postpone the proof of Theorem \ref{IDElambdaequal} to Section \ref{genproofs} for the generalities and Section \ref{firstproof} for the particulars.

\begin{remark}\label{RemContrlReg1}
 For $\alpha \in \lbrace v, u \rbrace$, using the functions $\phi_\alpha=1$ and neglecting the negative terms, we can obtain the following bound:
\begin{equation*}
\max_{\alpha} \left( \langle \xi_\alpha(t), 1\rangle \right) \leq \max_{\alpha} \left( \langle \xi_\alpha(0), 1\rangle \right) + C \int_0^t \max_{\alpha} \left( \langle \xi_\alpha(s), 1\rangle \right) \, ds
\end{equation*}
by Gronwall's lemma we then conclude:
\begin{equation*}
    \sup_{t \in [0,T]} \max_{\alpha} \left( \langle \xi_\alpha(t), 1\rangle \right) \leq \max_{\alpha} \left( \langle \xi_\alpha(0), 1\rangle\right) e^{CT}
\end{equation*}
which by the conservation of mass in vectors, the fact that the sup is bounded can also be extended to the case $\alpha=c$.
\end{remark}

Theorem \ref{Theoremequalintro} is the strong form of Theorem \ref{IDElambdaequal}. In order to verify that we can indeed obtain such a strong formulation, we need to verify that if we assume that at time zero the triplet $\left( \xiv(t), \xiu(t), \xic(t) \right)$ has densities with respect to their respective Haar measures, indeed we have the propagation of this property for later times. 

\subsection{Propagation of absolute continuity}

\begin{theorem}\label{PropAbsol}
Let us assume that for  $ \alpha \in \lbrace u, c \rbrace$ the diffusive generator $\L^\alpha$ is reversible. Let us also assume that for $ \alpha \in \lbrace v, u, c \rbrace$, at time zero each of the limiting measures $\xi_\alpha(0)$ admits a density of the following form: 
\begin{align*}
\xi_v(0)(dx,dz) = g_v(x,z) \, \zeta_E(dx) \, dz, \quad \xiu(0)(dy,dz) = g_u(y) \, dy, \text{ and } \xic(0)(dy,dz) = g_c(y,z) \, dy \, dz,
\end{align*}
where $\zeta_E$ denotes the counting measure on $E$. Consider the associated solution  $\xi(t) = \left( \xi_v(t), \xiu(t), \xic(t) \right)$ of \eqref{IDElambdaequaleq1}-\eqref{IDElambdaequaleq3} with initial condition $ \left( \xi_v(0), \xiu(0), \xic(0) \right)$. Then, under the assumptions of Theorem \ref{IDElambdaequal}, for each time $t >0$, and each $\alpha \in \lbrace v, u, c \rbrace$, there exists functions $g_\alpha(t, \cdot): \D_\alpha \to \R$ such that:
\begin{align*}
\xi_v(t)(dx,dz) = g_v(t,x,z) \, dz, \quad 
\xiu(t)(dy,dz) = g_u(t,y) \, dy, \text{ and } 
\xic(t)(dy,dz) = g_c(t,y,z) \, dy \, dz. \nn 
\end{align*}
In other words, we have propagation of absolute continuity.
\end{theorem}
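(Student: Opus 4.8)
The plan is to recast the weak system \eqref{IDElambdaequaleq1}--\eqref{IDElambdaequaleq3} in mild (Duhamel) form and to exploit that the two reversible diffusions instantly turn finite measures into densities in the spatial variable. For $\alpha\in\{u,c\}$ let $P^\alpha_t$ be the semigroup generated by $\L^\alpha$; note that $\L^c$ acts only on the spatial variable $w\in\D$, the trait $e$ being frozen. Since $\sigma^\alpha>0$ (\cite{fournier2010absolute}) and $\partial\D$ is $C^3$, the reflected diffusion admits a jointly continuous, conservative transition density $p^\alpha_t(\cdot,\cdot)$ with respect to Lebesgue measure on $\D$, and reversibility makes it symmetric and $P^\alpha_t$ self-adjoint, so that for every finite measure $\mu$ and bounded measurable $\psi$,
\begin{equation*}
\langle\mu,P^\alpha_t\psi\rangle=\int\psi(y)\Big(\int p^\alpha_t(y,y')\,\mu(dy')\Big)\,dy,\qquad \int_\D p^\alpha_t(y,y')\,dy=1 .
\end{equation*}
In particular $\mu\mapsto\int p^\alpha_t(\cdot,y')\mu(dy')$ sends any finite measure to an $L^1$ density for $t>0$, of mass $\mu(\D)$.

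First I would treat the free vectors. Testing \eqref{IDElambdaequaleq2} against $s\mapsto P^u_{t-s}\psi$ and using the backward equation gives, for $t>0$,
\begin{equation*}
\langle\xi_u(t),\psi\rangle=\langle\xi_u(0),P^u_t\psi\rangle+\int_0^t\langle S_u(s),P^u_{t-s}\psi\rangle\,ds-\int_0^t\langle V_u(s)\,\xi_u(s),P^u_{t-s}\psi\rangle\,ds,
\end{equation*}
with $V_u(s,y)=\int_{\Vp}\beta(s,y,x,\langle\xi_v(s)^x,1\rangle,z)\,\xi_v(s)(dx,dz)$ bounded and $S_u(s)$ the finite measure carrying the unloading and virus-death gains. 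Since $\xi_u$ has no trait variable, the displayed duality shows every term is $\psi$ integrated against a density in $y$ --- the kernel $p^u_{t-s}$ regularizing the finite measures $\xi_u(0)$, $S_u(s)$, $V_u(s)\xi_u(s)$, whose masses are controlled by Assumption \ref{AssPherates} and Remark \ref{RemContrlReg1} --- so $\xi_u(t)$ is absolutely continuous for every $t>0$, with no use of initial regularity. The same computation for $\xi_c$ gives absolute continuity in the spatial variable $w$; but as $P^c_t$ leaves the trait $e$ untouched, absolute continuity in $e$ must be tracked separately.

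This is the only genuinely coupled point. In the trait variable the charging source of $\xi_c$ inherits its $e$-density from $\xi_v$, while the unloading source of $\xi_v$ inherits its $z$-density from $\xi_c$; and $\xi_v$, which carries no diffusion, evolves on each $\{x\}\times\X$ by the non-local reaction equation \eqref{IDElambdaequaleq1}. Writing $n_x(r):=\langle\xi_v(r)^x,1\rangle$ (bounded, Remark \ref{RemContrlReg1}), $W_c(s,y,z):=\sum_{x\in E}\eta(s,y,x,z)+\gamma(z)$, and the integrating factor $\Phi(t,x,z)=\exp\big(\int_0^t[(1-\mu)b(x,z)-d(z)-c\,n_x(r)-\int_\D\beta(r,y,x,n_x(r),z)\,\xi_u(r)(dy)]\,dr\big)$, the mild forms of \eqref{IDElambdaequaleq1} and \eqref{IDElambdaequaleq3} read as a closed linear Volterra system for the candidate trait-densities,
\begin{align*}
g_v(t,x,z)&=\Phi(t,x,z)g_v(0,x,z)+\int_0^t\tfrac{\Phi(t,x,z)}{\Phi(s,x,z)}\Big[\mu\!\int_\X b(x,z')m(z',z)g_v(s,x,z')\,dz'+\int_\D\eta(s,y,x,z)g_c(s,y,z)\,dy\Big]ds,\\
g_c(t,y,z)&=\int_\D p^c_t(y,y')g_c(0,y',z)\,dy'+\int_0^t\!\!\int_\D p^c_{t-s}(y,y')\Big[\textstyle\sum_{x\in E}\beta(s,y',x,n_x(s),z)g_v(s,x,z)\,g_u(s,y')-W_c(s,y',z)g_c(s,y',z)\Big]dy'\,ds,
\end{align*}
where $g_u$ is the density already obtained. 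Because the mutation kernel $m(z',\cdot)\,dz$ is absolutely continuous and the initial data $g_v(0,\cdot,\cdot)$, $g_c(0,\cdot,\cdot)$ are, a Picard iteration for this system converges in $C([0,T];L^1)$ by boundedness of all coefficients and Gronwall; its limit is a genuine $L^1$ solution, which by the uniqueness in Theorem \ref{IDElambdaequal} must coincide with the measure solution. As total-variation limits preserve absolute continuity, $\xi_v(t)$ and $\xi_c(t)$ admit densities for all $t$, and this is where the hypotheses on $\xi_v(0)$ and $\xi_c(0)$ enter.

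The hard part is justifying these mild identities rigorously: testing the weak equations against the non-stationary functions $P^\alpha_{t-s}\psi$ requires approximating bounded $\psi$ by elements of $D(\L^\alpha)$ (via Remark \ref{remarkusefunction0}) and controlling $\L^\alpha P^\alpha_{t-s}\psi$ as $s\uparrow t$, for which the gradient and transition-kernel estimates of the reflected diffusion are exactly what is needed; the remaining measurability and Fubini steps used to read off the densities are then routine.
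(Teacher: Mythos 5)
Your proposal is correct in substance but follows a genuinely different route from the paper. The paper never constructs densities: it tests the weak equations directly against indicators of null sets, exploiting two cancellations --- summing the $v$- and $c$-components kills the loading/unloading exchange terms in the trait variable (and the mutation gain vanishes on Lebesgue-null sets of $\X$), while summing the $u$- and $c$-components kills the exchange terms in the space variable --- and then closes the argument with a Gronwall inequality $M'(t)\le \bar b M(t)$, $M(0)=0$, respectively with reversibility (self-adjointness of $P_\L(t)$) to throw the semigroup onto the initial densities. Your argument instead builds the densities: heat-kernel regularization via Duhamel for the vector components, then a closed linear Volterra system for the trait densities $(g_v,g_c)$ solved by Picard iteration and identified with the measure solution. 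Each approach buys something. The paper's is shorter, needs only the semigroup and reversibility (no existence or regularity of a transition density $p^\alpha_t$), but requires initial densities for all three components and, as written, assumes $\L^u=\L^c$ so that a single semigroup test function produces the cancellation. Yours needs the transition kernel of the reflected diffusion (available here, and consistent with the references the paper itself invokes for $\Pi^B_{\nu_v}$), but in exchange handles $\L^u\neq\L^c$ with no extra effort, shows that $\xi_u(t)$ and the spatial marginal of $\xi_c(t)$ are absolutely continuous for every $t>0$ \emph{without} initial regularity, and produces explicit Duhamel representations of the densities that would be directly useful for the strong formulation in Theorem \ref{Theoremequalintro}. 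One point should be stated more carefully: the Picard limit solves a \emph{linear} Volterra system whose coefficients ($n_x$, $\Phi$, $g_u$) are read off from the given measure solution, so it is not a priori a solution of the full nonlinear system \eqref{IDElambdaequaleq1}--\eqref{IDElambdaequaleq3}, and the uniqueness statement of Theorem \ref{IDElambdaequal} does not apply directly. What you actually need --- and what holds --- is uniqueness for that linear system in the space of finite-measure-valued paths, which follows from a total-variation Gronwall estimate of exactly the type used in the paper's uniqueness section (all the operators involved, multiplication by bounded coefficients, the mutation kernel, $P^c_{t-s}$, are TV-contractive or TV-bounded); with that substitution the identification of the measure solution with your $L^1$ solution is complete.
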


\begin{proof}
Since the space $E$ is finite, and $\zeta_E$ is the counting measure on $E$, any $\zeta_E$-null subset of $E$ is empty. Hence we only analyse null sets of $\X$ and $\D$. Let us consider a set $A \subset \X$ of Lebesgue measure zero. We want to show that both measures $\xi^\X_v(t) := \int_E \xi_v(t)(dx,\cdot)$ and $\xi^\X_c(t):= \int_\D \xic(dy,\cdot)$ assign zero mass to $A$. We want to take advantage of the fact that both measures are non-negative to only deal with the measure:
\begin{equation*}
    \nu^\X(t) = \xi^\X_v(t) + \xi^\X_c(t)
\end{equation*}
By Theorem \eqref{IDElambdaequal}, and the absolute continuity at time zero, we have
\begin{align}\label{abscontnuX0}
 \nu^\X(t)(A)  &=   \int_0^t \int_{\Vp} b(x,z) \indic{A}(z) \, \xi_v(s)(dx,dz) \,  ds -  \int_0^t  \int_{\Vp}  \left[ d(z) + c \langle \xi_v(s)^x, 1\rangle \right] \, \indic{A}(z) \xi_v(s)(dx,dz) ds \nonumber \\
& -\int_0^t \int_{\Vc} \gamma(z) \indic{A}(z) \, \xic(s)(dy,dz) \,  ds
\end{align}
Let the function $M(t)$ be given by
\begin{equation*}
    M(t) :=  \nu^\X(t)(A)
\end{equation*}
Notice that $M(0)=0$, and by \eqref{abscontnuX0}  $M(t)$  is differentiable.\\

Moreover, by Assumption \ref{AssPherates} and the non-negativity of $\xi^\X_c(t)$, we have:
\begin{align*}
 \frac{d}{dt}M(t)  &\leq   \bar{b} M(t) -  \int_{\Vp}  \left[ d(z) + c \langle \xi_v(t)^x, 1\rangle \right] \, \indic{A}(x,z) \xi_v(t)(dx,dz)  \nonumber \\
&-\int_0^t \int_{\Vc} \gamma(z) \indic{A}(z) \, \xic(s)(dy,dz) \,  ds \nn 
\end{align*}
which implies $ \frac{d}{dt}M(t)\leq   \bar{b} M(t)$,
and as a consequence $ M(t) = 0$.\\

We now show absolute continuity of $\xiu(t)$  with respect to the Lebesgue measure on $\D$. For simplicity of exposition let us assume that $\L^u = \L^c$ (denoted by $ \L$), and consider a measurable set $D \subset \D$ of null Lebesgue measure. Here we also take advantage of the positivity of both measures $\xiu(t)$ and $\xi_c^\D(t):= \int_\X \xic(t)(\cdot, dz)$, to only deal with the measure:
\begin{equation*}
    \nu^\D(t) = \xiu(t) +\xi_c^\D(t)
\end{equation*}

We would like to use to plug $\indic{D}$ and obtain an expression for $\nu^\D(t)(D)$, but the indicator function $\indic{D}$ is not an element of $D(\L)$. However we can use the semi-group trick that we use in Section \ref{UNIQUESEC} to define the following function:
\begin{equation*}
  \phi(s, y) = P_\L (t-s) \indic{D}(y) \quad \forall y \in \D,
\end{equation*}
for a fixed $t \in [0,T]$. By construction we have that $\phi(s, y)$ is a solution of the boundary problem:
\begin{align*}
    \partial_s \phi(s, y) &- \L \phi(s, y) = 0 \quad \text{ on } [0,T] \times \D  \nonumber \\
    \partial_n \phi(s, y) &= 0 \quad \text{ on } [0,T] \times \partial \D,  \nonumber \\
\lim_{s \to t}    \phi(s, y) &=  \indic{D}( y) \quad \text{ on } \D. 
\end{align*}
Notice that we can also re-write equations \eqref{IDElambdaequaleq1}-\eqref{IDElambdaequaleq3} in their weak time-space formulation. In particular for the last two equations we obtain:
\begin{align}\label{WEAKST2}
\langle \xiu(t) , \phiu(t,\cdot)\rangle &=  \langle \xiu(0) , \phiu(0,\cdot) \rangle +\int_0^t \int_{\mathcal{D}} \left(\mathcal{L}^u \phiu(s,y) +\partial_s  \phiu(s,y) \right)\, \xiu(s)(dy) \, ds \nonumber \\
&-  \int_0^t  \int_{\Vp \times \D} \beta(s,y,x,\langle \xi_v^x(s), 1\rangle,z) \phiu(s,y) \xiv(s)(dx,dz) \,  \xiu(s)(dy) \, ds \nonumber \\
&+\int_0^t   \int_{\Vc \times E} \eta(s,y,x,z) \phiu(s,y) \xic(s)(dy,dz) \,  dx \,   ds + \int_0^t \int_{\Vc} \gamma(z) \phiu(s,y) \, \xic(s)(dy,dz) ds, 
\end{align}
and 
\begin{align}\label{WEAKST3}
\langle \xic(t) , \phic(t,\cdot)\rangle&= \langle \xic(0) , \phic(0,\cdot)\rangle +\int_0^t \int_{\Vc} \left(\mathcal{L}^c \phic(s,y,z) + \partial_s \phic(s,y,z)  \right) \, \xic(s)(dy,dz) \, ds \nonumber \\
&+ \int_0^t  \int_{\Vp \times \D} \beta(s,y,x,\langle \xi_v^x(s), 1\rangle,z) \phic(s,y,z)  \xiv(s)(dx,dz) \,  \xiu(s)(dy) \, ds \nonumber \\
&-  \int_0^t   \int_{\Vc \times E} \eta(s,y,x,z) \phic(s,y,z)  \xic(s)(dy,dz) \,  dx \,   ds -\int_0^t \int_{\Vc} \gamma(z) \phic(s,y,z)  \, \xic(s)(dy,dz) \,  ds. 
\end{align}
Summing \eqref{WEAKST2} and \eqref{WEAKST3}, and unsing that $\L^u=\L^c=\L$, gives:
\begin{align}\label{nuequation}
\langle \nu^\D(t)  , \indic{D} \rangle &=  \langle \nu_0 , \phi(0,\cdot) \rangle = \int_{\D} g_u(y) P_\L (t) \indic{D}(y) \, dy + \int_{\D} \left( \int_{\X} g_c(y,z) \, dz \right) P_\L (t) \indic{D}(y) \, dy = 0,
\end{align}
where in the last equality we used the reversibility of $\L$.

Finally we check that the measure $\xic(t)$ is absolutely continuous with respect to the product of the Lebesgue measures on $\D$ and $\X$. Let us denote by $D_c$ a Lebesgue null-subset of $\D \times \X$. Similar than in the previous case, we define the following function:
\begin{equation*}
  \phi(s, y,z) = P_{\L_c} (t-s) \indic{D_c}(y,z) \quad \forall (y,z) \in \D \times \X,
\end{equation*}
for a fixed $t \in [0,T]$ which solves its corresponding boundary problem. We then conclude by noticing that in this case, by the non-negativity of $\xic$ we have:
\begin{align}
    \xic(t)(D_c) &= \int_{\D \times \X} \, g_c(y,z) \,  P_{\L_c} (t) \indic{D_c}(y,z) \, dy \, dz \nonumber \\
&-  \int_0^t   \int_{\Vc \times E} \eta(s,y,x,z) \phic(s,y,z)  \xic(s)(dy,dz) \,  dx \,   ds -\int_0^t \int_{\Vc} \gamma(z) \phic(s,y,z)  \, \xic(s)(dy,dz) \,  ds \leq 0.
\end{align}
\end{proof}

\subsection{Proof of Theorem \ref{PersCond}}\label{ProofLocPer}

Let us start by giving the linearization of the system given in Theorem \ref{Theoremequalintro} around a generic equilibrium point $(g_v^*,g_u^*,g_c^*)$:
\begin{align*}
\frac{\partial}{\partial t} h_v(t,x,z) &=  (1-\mu) b(x,z) \, h_v(t,x,z)+ \mu \int_\X b(x,z\myprime) m(z^\prime, z) \,  h_v(t,x,z\myprime) \, dz^\prime   -  d(z) h_v(t,x,z)  \nonumber \\
&- c \left( \int_\X g_v^*(x,z^\prime) dz\myprime \right) h_v(t,x,z) - c \left( \int_\X h_v(t,x,z^\prime) dz\myprime \right) g_v^*(x,z)      \nonumber \\
&- \left(\int_\D \beta(y,x,z) g_u^*(y) \, dy \right) \, h_v(t,x,z) - \left(\int_\D \beta(y,x,z) h_u(t,y) \, dy \right) \, g_v^*(x,z) \nonumber \\
&+ \int_{\D} \eta(y,x,z) \,  h_c(t,y,z) \, dy
\end{align*}
\begin{align*}
\frac{\partial}{\partial t} h_u(t,y)  &=  \Delta h_u(t,y) - \left(\int_{ E \times \X} \beta(y,x,z) \,  h_v(t,x,z) \, \zeta_E( dx) \, dz \right)   \, g_u^*(y)   \nonumber \\
& - \left(\int_{ E \times \X} \beta(y,x,z) \,  g_v^*(x,z) \, \zeta_E( dx) \, dz \right)   \, h_u(t,y) +\int_{E \times \X} \eta(y,x,z) \,  h_c(t,y,z) \, \zeta_E( dx) \, dz 
\end{align*}
\begin{align*}
\frac{\partial}{\partial t} h_c(t,y,z)&= \Delta_y h_c(t,y,z)+ \left(\int_{ E } \beta(y,x,z) \,  h_v(t,x,z) \, \zeta_E( dx)  \right)   \, g_u^*(y)  \nonumber \\
&+ \left(\int_{ E } \beta(y,x,z) \,  g_v^*(x,z) \, \zeta_E( dx)  \right)   \, h_u(t,y)-\left(\int_{E} \eta(y,x,z) \, \zeta_E( dx) \right)\,  h_c(t,y,z) 
\end{align*}
with the same boundary conditions as before.\\

We will further specialize the previous linearization to the  equilibrium points:
\begin{align*}
  g_v^*(x,z)=0, \quad   g_u^*(y) = 1, \text{ and } g_c^*(y,z)=0
\end{align*}
for all $x \in E, y \in \D$, and $z \in \X$. To conclude local persistence it is enough to show that the perturbation $h_v$ is monotonically increasing with time.

For this particular set of equilibria the linearization takes the form:
\begin{align}\label{IDE1strlinequi}
\frac{\partial}{\partial t} h_v(t,x,z) &= \left( (1-\mu) b(x,z)  -  d(z) -\int_\D \beta(y,x,z)  \, dy\right)\, h_v(t,x,z)   \nonumber \\
&+ \mu \int_\X b(x,z\myprime) m(z^\prime, z) \,  h_v(t,x,z\myprime) \, dz^\prime  +\left( \int_{\D} \eta(y,x,z) \,  h_c(t,y,z) \, dy \right)
\end{align}
\begin{align}\label{IDE2strlinequi}
\frac{\partial}{\partial t} h_u(t,y)  &=  \Delta h_u(t,y) - \left(\int_{ E \times \X} \beta(y,x,z) \,  h_v(t,x,z) \, \zeta_E( dx) \, dz \right)   \nonumber \\
&+\int_{E \times \X} \eta(y,x,z) \,  h_c(t,y,z) \, \zeta_E( dx) \, dz  
\end{align}
\begin{align}\label{IDE3strlinequi}
\frac{\partial}{\partial t} h_c(t,y,z)&= \Delta_y h_c(t,y,z)+ \left(\int_{ E } \beta(y,x,z) \,  h_v(t,x,z) \, \zeta_E( dx)  \right)    \nonumber \\
&-\left(\int_{E} \eta(y,x,z) \, \zeta_E(dx) \right)\,  h_c(t,y,z) 
\end{align}
with the same boundary conditions as before.\\

By Theorem 2.2  in \cite{sato1965multi} we have that
\begin{align}
 h_c(t,y,z) = \int_{\D}p(t,y,y\myprime;z) h_c(0,y,z) \, dy\myprime + \int_0^t \int_{\D}p(s,y,y\myprime;z)  \left(\int_{ E } \beta(y\myprime,x,z) \,  h_v(t-s,x,z) \, \zeta_E( dx) \right) \, dy\myprime \, ds
\end{align}
where for every $z \in \X$, $p(t,\cdot,\cdot,z)$ is the fundamental solution to:
\begin{align*}
\frac{\partial}{\partial t} h_c(t,y,z)&= \Delta_y h_c(t,y,z)-\left(\int_{E} \eta(y,x,z) \, \zeta_E(dx) \right)\,  h_c(t,y,z)
\end{align*}
with same boundary conditions as before.\\

Assuming $h_c(0,y,z) =0$, reduces \eqref{IDE1strlinequi} to:
\begin{align}\label{eqnequihv}
\frac{\partial}{\partial t} h_v(t,x,z) &= \left( (1-\mu) b(x,z)  -  d(z) -\int_\D \beta(y,x,z)  \, dy\right)\, h_v(t,x,z) + \mu \int_\X b(x,z\myprime) m(z^\prime, z) \,  h_v(t,x,z\myprime) \, dz^\prime    \nonumber \\
&+ \int_{\D} \eta(y,x,z) \,  \left(\int_{ E } \int_0^t  \left(  \int_{\D}p(s,y,y\myprime;z)  \beta(y\myprime,x,z) \, dy\myprime \, \right) \,  h_v(t-s,x\myprime,z) \, ds  \, \zeta_E(dx\myprime) \right) \, dy.
\end{align}
The positivity of the RHS of \eqref{eqnequihv}, which is a consequence of \eqref{condongvcero} and \eqref{condonR}, concludes the proof. 


\subsection{IDE formulation: second case}
Suppose Assumptions  \ref{Assbetakreg2} and \ref{HydroAssump} are satisfied. Then we have the following:
\begin{theorem}\label{IDElambdaless}
Let $\lambda \in (0,1)$. Consider the sequence of processes $\left\{\nuv^K(t): t \geq 0 \right\}_{K \geq 1}$ given by \eqref{Lambdakdef}. Then for all $T>0$, the sequence $\left\{\nuv^K(t): t \geq 0 \right\}_{K \geq 1}$ converges in law in $\mathbb{D}([0,T], \M_F(\Vp))$ to a deterministic continuous function $\xiv$ belonging to the space $\mathbb{C}([0,T], \M_F(\Vp))$, and solving the IDE:
\begin{align}\label{IDEvirusequal}
\langle \xiv(t) , \phiv\rangle &=  \langle \xiv(0) , \phiv\rangle +\int_0^t \int_{\Vp} b(x,z) \phiv(x,z) \, \xiv(s)(dx,dz) \,  ds \nonumber \\
&- \mu \int_0^t \int_{\Vp} b(x,z)\left[ \phiv(x,z) -\int_{ \X} m(z,e) \phiv(x,e) \, de  \right] \xiv(s)(dx,dz) \,  ds \nonumber \\
&-  \int_0^t  \int_{\Vp}  \left[ d(z) + c \langle \xiv(s)^x, 1\rangle \right] \phiv(x,z) \xiv(s)(dx,dz) ds \nonumber \\
&- \int_0^t  \int_{\Vp} \left( \int_{\M_F(\D)} \int_{ \D} \beta(s,y,x,\langle \xiv(s)^x, 1\rangle,z) \, \Pi_{\xiv(s)}^{B}(\xiu (dy) \times \M_F(\Vc)) \right) \, \phiv(x,z) \, \xiv(s)(dx,dz)  \, ds \nonumber \\
&+ \int_0^t   \int_{ E} \left( \int_{\M_F(\Vc)} \int_{\Vc}\eta(s,y,x,z) \, \Pi_{\xiv(s)}^{B}(\M_F(\D) \times \xic(dy,dz)) \right)\phiv(x,z)   \, dx \,   ds, 
\end{align}
for all $\phiv$ bounded and measurable, and where for each $\nuv \in \M_F(\Vp)$ the measure $\Pi_{\nu_v}^B$ is the unique stationary measure of the generator $B_{\nuv}:D(B_{\nuv}) \to \C(\D \times \I)$ given by:
\begin{align}\label{DefBnu}
B_{\nu_v} F_{\phiu,\phic}(\nuu,\nuc) &= \partial_x F_{\phiu,\phic}(\nuu,\nuc)  \int_{\D} \L^u \phiu(y) \, \nuu(dy)  +\partial_y F_{\phiu,\phic}(\nuu,\nuc) \int_{\Vc} \L^c \phic(w,e) \, \nuc(dw,de)  \nonumber \\
&- \partial_x F_{\phiu,\phic}(\nuu,\nuc)  \int_{\Vp \times \D} \beta(t,y,x,N_x(t),z) \phiu(y) \nuv(dx,dz) \,  \nuu(dy) \nn \\
&+ \partial_y F_{\phiu,\phic}(\nuu,\nuc)  \int_{\Vp \times \D} \beta(t,y,x,N_x(t),z) \phic(y,z)  \nuv(dx,dz) \,  \nuu(dy) \nonumber \\
&+\partial_x F_{\phiu,\phic}(\nuu,\nuc) \int_{\Vc \times E} \eta(t,y,x,z)  \phiu(y) \nuc(dy,dz) \,  dx  \nonumber \\
&-\partial_y F_{\phiu,\phic}(\nuu,\nuc) \int_{\Vc \times E} \eta(t,y,x,z) \phic(y,z)  \nuc(dy,dz) \,  dx  \nonumber \\
&+ \partial_x F_{\phiu,\phic}(\nuu,\nuc) \int_{\Vc} \gamma(z)  \phiu(y)  \nuc(dy,dz) -\partial_y F_{\phiu,\phic}(\nuu,\nuc) \int_{\Vc} \gamma(z) \phic(y,z) \nuc(dy,dz) 
\end{align}
for all $\phiu \in \C_b^2(\D) \cap \D(\L^u)$, and $\phic \in \C_b^2(\I) \cap D(\L^c)$.
\end{theorem}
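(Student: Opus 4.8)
The plan is to read \eqref{IDEvirusequal} as the conclusion of an averaging principle for a slow--fast system in the spirit of Kurtz, in which the virus measure $\nuv^{(K)}$ is the slow variable and the pair of vector measures $(\nuu^{(K)},\nuc^{(K)})$ is the fast variable. Under Assumption \ref{Assbetakreg2} with $\lambda\in(0,1)$, every mechanism touching the vectors (diffusion, loading, unloading, death on vectors) enters $\L^{(K)}$ at order $K^{1-\lambda}\to\infty$, so that, for a test functional depending only on the vectors, the leading contribution with the virus configuration frozen is exactly the operator $B_{\nuv}$ of \eqref{DefBnu}, the second--order (fluctuation) terms of $\L_2^{(K)}$ being of relative order $K^{-\lambda}$. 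First I would record, exactly as in the proof of Theorem \ref{IDElambdaequal}, the semimartingale decomposition of $\langle\nuv^{(K)}(t),\phiv\rangle$: its martingale part has predictable bracket of order $1/K$ and hence vanishes in $L^2$, while its finite--variation part reproduces the demographic, mutation and competition terms of \eqref{IDEvirusequal} together with the two vector--coupling terms built from $\beta$ and $\eta$. The whole difficulty is that these last two terms depend on $(\nuu^{(K)},\nuc^{(K)})$, which do not converge pathwise and must instead be replaced by their averages.

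To make this precise I would introduce the occupation measures
\[
  \Gamma^K(ds,d\nuu,d\nuc)=\delta_{(\nuu^{(K)}(s),\nuc^{(K)}(s))}(d\nuu,d\nuc)\,ds
\]
on $[0,T]\times\M_F(\D)\times\M_F(\Vc)$ and prove tightness of the pair $(\nuv^{(K)},\Gamma^K)$. Tightness of the slow marginal follows from the third--moment bound of Assumption \ref{HydroAssump} and Proposition \ref{Propcontrolp} via an Aldous--Rebolledo criterion, as for Theorem \ref{IDElambdaequal}; tightness of $\Gamma^K$ is essentially free, since the conservation of the total number of vectors (Remark \ref{Conservvectors}) confines the fast process to a slice of fixed total mass, so its time--marginals live in a fixed bounded, hence relatively compact, set of measures. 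Along a convergent subsequence I would write the limit of $\Gamma^K$ as $\Gamma(ds,d\nuu,d\nuc)=\Pi_s(d\nuu,d\nuc)\,ds$ for a measurable family of probability measures $\Pi_s$.

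The core of the argument is to identify $\Pi_s$ with $\Pi_{\xiv(s)}^B$. For this I would first establish, for each fixed $\nuv$, existence and uniqueness of a stationary probability measure $\Pi_{\nuv}^B$ for $B_{\nuv}$, together with weak continuity of $\nuv\mapsto\Pi_{\nuv}^B$: with $\nuv$ frozen, the vector system is a finite collection of reflected, non--degenerate ($\sigma>0$ by Assumption \ref{AssPherates}) diffusions on the compact domain $\bar{\D}$ whose charged/uncharged labels switch at the bounded rates $\beta,\eta,\gamma$, and unique ergodicity follows from irreducibility on the compact fixed--mass configuration space. Then I would apply Kurtz' identification: for any admissible fast test functional $G=F_{\phiu,\phic}$, the process $G(\nuu^{(K)}(t),\nuc^{(K)}(t))-G(\nuu^{(K)}(0),\nuc^{(K)}(0))-\int_0^t (\L^{(K)}G)(s)\,ds$ is a martingale; dividing by $K^{1-\lambda}$, using $\L^{(K)}G = K^{1-\lambda}B_{\nuv^{(K)}(s)}G + o(K^{1-\lambda})$ and the boundedness of $G$, gives $\int_0^t B_{\nuv^{(K)}(s)}G\,ds\to0$ in probability. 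Rewriting this integral against $\Gamma^K$ and passing to the limit forces $\int B_{\xiv(s)}G\,d\Pi_s=0$ for almost every $s$ and every $G$, whence $\Pi_s=\Pi_{\xiv(s)}^B$ by uniqueness of the stationary measure.

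It then remains to substitute $\Pi_s=\Pi_{\xiv(s)}^B$ into the limit of the finite--variation part of the slow equation: the two vector--coupling terms become exactly the averaged integrals against $\Pi_{\xiv(s)}^B$ appearing in \eqref{IDEvirusequal}, while all other terms pass to the limit as in Theorem \ref{IDElambdaequal}. Finally, uniqueness of the solution of \eqref{IDEvirusequal} — obtained by a Gronwall estimate using the Lipschitz continuity of $\beta$ (Assumption \ref{Assbetakreg2}) together with the weak continuity of $\nuv\mapsto\Pi_{\nuv}^B$ established above — upgrades subsequential convergence to convergence in law of the full sequence to the deterministic $\xiv$. I expect the genuine obstacle to be this averaging step: proving unique ergodicity and continuous dependence of $\Pi_{\nuv}^B$ for the measure--valued, infinite--dimensional fast process, and then rigorously justifying the interchange of limits in the occupation measures, which is the infinite--dimensional version of Kurtz' theorem rather than its textbook finite--dimensional form.
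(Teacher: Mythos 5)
Your proposal follows the same route as the paper's proof in Section \ref{secondproof}: Kurtz' averaging principle (Theorem \ref{AverageThm} and Example 2.3 of \cite{kurtz1992averaging}) applied to the slow variable $X_K=\nuv^{(K)}$ and fast variable $Y_K=(\nuu^{(K)},\nuc^{(K)})$, with compact containment of the slow component and relative compactness of the fast one (both from Proposition \ref{Propcontrolp}, Assumption \ref{HydroAssump} and compactness of $\bar{\D}\times\X$), occupation measures $\Gamma^K$, identification of the operators $A$ and $B$ by Taylor expansion of the cylindrical martingales, and disintegration of the limiting occupation measure as $\Pi_{\xiv(s)}^{B}\,ds$ with $\Pi_{\xiv(s)}^{B}$ stationary for $B_{\xiv(s)}$. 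Your estimates on the slow equation (bracket of order $1/K$, error terms $\epsilon_K$, $\delta_K$ of relative order $K^{-\lambda}$) also match the paper's.

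The genuine gap is your proposed proof of unique ergodicity of the frozen fast dynamics. You argue that with $\nuv$ frozen the vector system is ``a finite collection of reflected, non-degenerate diffusions whose charged/uncharged labels switch at bounded rates,'' and that unique ergodicity follows from irreducibility on a compact configuration space. That describes the pre-limit particle system at fixed $K$, not the operator $B_{\nuv}$ of \eqref{DefBnu}. Observe that $B_{\nuv}$ acts on cylindrical functions only through the first derivatives $\partial_x F,\partial_y F$: it is a first-order operator, hence the generator of a \emph{deterministic} flow on $\M_F(\D)\times\M_F(\Vc)$ --- the law of large numbers at the fast time scale has already removed the stochasticity your irreducibility argument relies on, and Doob-type unique ergodicity is meaningless for a deterministic flow. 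The paper's route is instead to analyse supports: stationarity \eqref{statPinu} forces $\Supp\,\Pi_{\nuv}^{B}$ into the solution set of the elliptic system \eqref{EcuaDefSupp0}--\eqref{EcuaDefSupp1}, and uniqueness of the solution of that system (discussed under Assumption \ref{betahatassumption}) yields $\Pi_{\nuv}^{B}=\delta_{(\xiu,\xic)}$; this is also exactly what produces the elliptic equations of Theorem \ref{Theoremlessintro}. Two smaller points: for your final Gronwall step, weak continuity of $\nuv\mapsto\Pi_{\nuv}^{B}$ is not enough --- one needs a quantitative (Lipschitz-type) dependence of the solution of the elliptic system on $\nuv$, a point the paper also leaves implicit since its uniqueness argument in Section \ref{genproofs} is written for \eqref{IDElambdaequaleq1}--\eqref{IDElambdaequaleq3}; and the step you defer as ``the infinite-dimensional version of Kurtz' theorem'' is not a new theorem (Kurtz' result is already stated for general complete separable metric spaces) but does require exhibiting a countable family $\hat{D}\subset\D(B)$ satisfying \eqref{closurecondition}, which the paper obtains by a one-point compactification and separability argument; this step cannot be skipped.
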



\subsection{Absolute continuity and  stationarity of the measure $\Pi_{\nu_v}^B$}

Theorem \eqref{IDElambdaless} seems rather abstract due to the apparently lack of information about the measure $\Pi_{\nu_v}^B$. However, under additional conditions we can find an explicit expression for the measure $\Pi_{\nu_v}^B$. Let us then make the following assumption:

\begin{assumption}\label{betahatassumption}
Assume that for all $\nuv \in \M_F(\Vp)$, and all $t \geq 0$, the function given by:
\begin{equation}\label{hatbeta}
    \bar{\beta}(t,y) = \int_{\Vp} \beta(s,y,x,\langle \nu_v^x, 1\rangle,z) \nu_v(dx,dz).
\end{equation}
is in $\C^2(\D)$. Moreover, assume the existence of a constant $\beta_0>0$ such that
\begin{equation}\label{betazerobound}
    \hat{\beta}(t,y) \geq \beta_0 > 0,
\end{equation}
uniformly in $y$.
Finally, assume that the death rate of viruses on vectors is zero.
\end{assumption}

First notice that the measure  $\Pi_{\nu_v}^B$ satisfies
\begin{equation}\label{statPinu}
    \int_{\M_F(\D) \times \M_F(\Vc)}  B_{\nu_v} F_{\phiu,\phic}(\nuu,\nuc) \,  \Pi_{\nu_v}^B(d \nuu, d \nuc)= 0 
\end{equation}
for all cylindrical function $F_{\phiu,\phic}: \M_F(\D) \times \M_F(\Vc) \to \R$.\\

Equation \eqref{statPinu} and the precise form \eqref{DefBnu}  implies that the support of the measure $\Pi_{\nu_v}^B$ is inside the set of all measures $\nuu$ $\nuc$ such that: 
\begin{align}\label{EcuaDefSupp0}
&\int_{\D} \L^u \phiu(y) \, \nuu(dy)- \int_{\Vp \times \D} \beta(t,y,x,N_x(t),z) \phiu(y) \nuv(dx,dz) \,  \nuu(dy)   \nn \\
&+ \int_{\Vc \times E} \eta(t,y,x,z)  \phiu(y) \nuc(dy,dz) \,  dx + \int_{\Vc} \gamma(z)  \phiu(y)  \nuc(dy,dz)  = 0,
\end{align}
and
\begin{align}\label{EcuaDefSupp1}
 &\int_{\Vc} \L^c \phic(w,e) \, \nuc(dw,de) +\int_{\Vp \times \D} \beta(t,y,x,N_x(t),z) \phic(y,z)  \nuv(dx,dz) \,  \nuu(dy)  \nn \\
& - \int_{\Vc \times E} \eta(t,y,x,z) \phic(y,z)  \nuc(dy,dz) \, \zeta_E(dx) - \int_{\Vc} \gamma(z) \phic(y,z) \nuc(dy,dz)= 0,
\end{align}
for all $\phiu \in \C_b^2(\D) \cap \D(\L^u)$, and $\phic \in \C_b^2(\I) \cap D(\L^c)$.

Since the system \ref{EcuaDefSupp0}-\ref{EcuaDefSupp1} has a unique solution $(\xi_u,\xi_c) \in \M_F(\D) \times \M_F(\Vc)$, we have that
\begin{equation*}
\Supp \, \Pi_{\nu_v}^B \subseteq \lbrace (\xi_u,\xi_c)  \rbrace.
\end{equation*}
The fact that $\Pi_{\nu_v}^B$ is a probability measure implies that it is of the form $\Pi_{\nu_v}^B = \delta_{\xiu,\xic}$, where $\xiu \in \M_F(\D)$, and $\xic \in \M_F(\Vc)$ satisfy \ref{EcuaDefSupp0}-\ref{EcuaDefSupp1}.\\

We want to find conditions that guarantee that the measures $\xiu$ and $\bar{\xi}_c$ have a density with respect to the Lebesgue measure on $\D$. Where $\bar{\xi}_c$ is given by:
\begin{equation*}
    \bar{\xi}_c (dy) := \int_{\X} \xic(dy,dz).
\end{equation*}
Let $W \subset \D$ be a set of Lebesgue measure zero, we want to find conditions such that:
\begin{align}
    \xiu(W) &= \int_{W} 1 \cdot \xiu(dy) = 0 \\ \label{LebDensCondII}
    \bar{\xi}_c(W) &= \int_{W} 1 \cdot \bar{\xi}_c(dy) = 0.    
\end{align}
Let then $f_W^u \in D(\L^u)$ satisfy the following PDE:
\begin{equation}\label{PDERefProb}
\L^u f_W^u(y) - \bar{\beta}(t,y) f_W^u(y) = \indic{W}(y), \text{ with }  \nabla f_W^u \cdot \bar{n}(y) = 0, \quad \forall y \in \partial \D,
\end{equation}
and where $\hat{\beta}$ is given by \eqref{hatbeta}.

Notice that from \ref{EcuaDefSupp0}-\ref{EcuaDefSupp1} with $\phiu = f_W^u$, and $\phic= 0$, we obtain:
\begin{equation}\label{Lebesbeta}
\xiu(W) =\int_{W} 1 \cdot \xiu(dy)  =-   \int_{E \times \D \times \X} \eta(t,y,x,z)  f_W^u(y) \xic(dy,dz) \, \zeta_E( dx).
\end{equation}
Using Assumption \ref{betahatassumption} we guarantee, first the existence of such a solution $f_W^u$, and second, that the RHS of \eqref{Lebesbeta} vanishes. By It\^o's formula, and the product rule (see Section 5.7 of \cite{karatzas2012brownian} for a similar procedure), we can find the following probabilistic interpretation of the solution of \eqref{PDERefProb}:
\begin{equation*}
 f_W^u(y) = -\E \left[ \int_0^\infty e^{-\int_0^s \bar{\beta}(t,Y_r^y) dr} \indic{W}(Y_s^y)ds\right]   
\end{equation*}
where $Y_s^y$ is the It\^o diffusion with infinitesimal generator $\L^u$, and started at $y \in \D$. Then we have:
\begin{align*}
 \xiu(W)  &=-  \int_{E \times \D \times \X} \eta(t,y,x,z)  f_W^u(y) \xic(dy,dz) = \int_{E \times \D \times \X} \eta(t,y,x,z) \E \left[ \int_0^\infty e^{-\int_0^s \bar{\beta}(t,Y_r^y) dr} \indic{W}(Y_s^y)ds\right] \xic(dy,dz) \nonumber \\
    &\leq \int_{E \times \D \times \X} \eta(t,y,x,z)  \left( \int_0^\infty  \P\left( Y_s^y \in W\right) \, ds\right) \xic(dy,dz)= 0
\end{align*}
where we have used Fubini's theorem, and the continuity of the diffusion parameters to be able to use that the transition kernel $P_s(y, \cdot)$, of the process $Y_s$, is absolutely continuous with respect to Lebesgue for each $s>0$ (see for example \cite{chen2011symmetric} for the case of RBM).\\

Analogously, to obtain that \eqref{LebDensCondII} is zero, we can also use the equation
\begin{equation*}
\L^c f_W^c(y,z) - \int_{E} \eta(t,y,x,z) \, \zeta_E( dx) \, f_W^c(y,z) = \indic{W}(y).
\end{equation*}

\section{Proofs of Theorem \ref{IDElambdaequal} and Theorem \ref{IDElambdaless} }\label{Proofs}

For the proofs of Theorem \ref{IDElambdaequal}
and Theorem \ref{IDElambdaless} we use the standard compactness-uniqueness approach. The proof of both theorems uses the same type of techniques with the exception of the part dealing with the characterization of limit points. The characterization of limit points for Theorem \ref{IDElambdaless} is interesting in its own. It is an application of an averaging principle due to T. Kurtz and given originally in \cite{kurtz1992averaging}. 

\subsection{Generalities for both cases}\label{genproofs}

\subsubsection*{Uniqueness of the limits}\label{UNIQUESEC}
Here we will combine the arguments for uniqueness used in the proof of Theorem 5.3 from \cite{fournier_microscopic_2004}, and in the proof of Theorem 4.2 in \cite{champagnat_invasion_2007}. Let us first assume that $(\xiv(t),\xiu(t),\xic(t))_{t \geq 0}$, and $(\bar{\xi}_v(t),\bar{\xi}_u(t),\bar{\xi}_c(t))_{t \geq 0}$ are solutions of \eqref{IDElambdaequaleq1}-\eqref{IDElambdaequaleq3}. We want to show that for all $\alpha \in \lbrace v, u, c \rbrace$ we have:
\begin{equation*}
    \norm{\xi_\alpha - \bar{\xi}_\alpha}_{\alpha} = 0,
\end{equation*}
where for $\nu_\alpha^1$ and $\nu_\alpha^2 \in \M_F(D_\alpha)$ their variation norm is given by:
\begin{equation*}
    \norm{\nu_\alpha^1 - \nu_\alpha^2}_\alpha = \sup_{\substack{\phi_\alpha \in L^\infty(\V_\alpha) \\ \norm{\phi_\alpha}_\infty\leq1}} \lvert \langle \nu_\alpha^1 - \nu_\alpha^2 , \phi_\alpha \rangle_\alpha \rvert. 
\end{equation*}
where $\V_u = \D$.

Let us first deal with the case $\alpha= v$. Let $\phiv$ be such that $\norm{\phiv}_\infty\leq 1$, by \eqref{IDElambdaequaleq1}  we have:
\begin{align}\label{uniquennxiv}
\left\lvert \langle \xi_v(t) -\bar{\xi}_v(t) , \phi_v\rangle \right\rvert &\leq (1-\mu) \int_0^t \left\lvert \int_{\Vp} b(x,z) \phiv(x,z) \, \left[\xi_v(s)(dx,dz)-\bar{\xi}_v(s)(dx,dz)\right] \,  \right\rvert ds \nonumber \\
&\hspace{-1.5cm}+ \mu \int_0^t \left\lvert \int_{\Vp} b(x,z)\int_{ \X} m(z,z^\prime) \phiv(x,z^\prime) \, dz^\prime  \, \left[\xi_v(s)(dx,dz)-\bar{\xi}_v(s)(dx,dz) \right] \right\rvert \,  ds \nonumber \\
&\hspace{-3cm}+  \int_0^t  \left\lvert \int_{\Vp}  d(z)  \phi_v(x,z) \left[\xi_v(s)(dx,dz)-\bar{\xi}_v(s)(dx,dz)\right] \right\rvert ds + c \int_0^t  \left\lvert\int_{\Vp}  \langle \xiv^x(s), 1\rangle \,  \phi_v(x,z) \left[\xi_v(s)(dx,dz)-\bar{\xi}_v(s)(dx,dz)\right] \right\rvert ds \nonumber \\
&+ c \int_0^t  \left\lvert\int_{\Vp}  \left[ \langle \bar{\xi}_v^x(s), 1\rangle- \langle \xiv^x(s), 1\rangle \right]  \phi_v(x,z) \,\bar{\xi}_v(s)(dx,dz) \right\rvert ds \nonumber \\
&\hspace{-3cm}+ \int_0^t  \left\lvert \int_{\Vp \times \D} \beta(s,y,x,\langle \xi_v^x(s), 1\rangle,z) \,  \xiu(s)(dy) \, \phi_v(x,z) \, \left[\xi_v(s)(dx,dz)-\bar{\xi}_v(s)(dx,dz)\right]  \right\rvert \, ds \nonumber \\
&\hspace{-3cm}+ \int_0^t  \left\lvert \int_{\Vp \times \mathcal{D}} \left[ \beta(s,y,x,\langle \xi_v^x(s), 1\rangle,z)  -\beta(s,y,x,\langle \bar{\xi}_v^x(s), 1\rangle,z)  \right] \, \xiu(s)(dy) \, \bar{\xi}_u(s)(dy) \, \phiv(x,z) \, \bar{\xi}_v(s)(dx,dz)  \right\rvert \, ds \nonumber \\
&\hspace{-3cm}+ \int_0^t  \left\lvert \int_{\Vp \times \mathcal{D}} \beta(s,y,x,\langle \bar{\xi}_v^x(s), 1\rangle,z) \, \left[ \xiu(s)(dy) -  \bar{\xiu}(s)(dy) \right]  \, \phiv(x,z) \, \bar{\xi}_v(s)(dx,dz)  \right\rvert \, ds \nonumber \\
&+ \int_0^t   \int_{\Vc \times E} \eta(s,y,x,z) \phi_v(x,z) \, dx  \,  \left[\xic(s)(dy,dz)- \bar{\xi}_c(s)(dy,dz)\right]  \,   ds.
\end{align}
Notice that the linear terms in the RHS of \eqref{uniquennxiv} can be bounded from above using $\norm{\phi_v}_\infty\leq 1$, and Assumption \ref{AssPherates}. For example the first term can be bounded from above as
\begin{equation*}
\int_0^t \left\lvert \int_{\Vp} b(x,z) \phi_v(x,z) \, \left[\xi_v(s)(dx,dz)-\bar{\xi}_v(s)(dx,dz)\right] \,  \right\rvert ds \leq \bar{b} \, \int_0^t \sup_{\phi \leq 1}  \,  \lvert \langle \xi_v(s) - \bar{\xi}_v(s), \phi \rangle \rvert  \, ds.  
\end{equation*}
Similar bounds can be derived for the other linear terms. The main difficulty comes from the non-linear terms. Let us first deal with the terms coming from competition among viruses, i.e. the integrals  with the competition constant $c$ in front. First notice that we have
\begin{align*}
\left\lvert \langle \bar{\xi}_v^x(s), 1\rangle- \langle \xiv^x(s), 1\rangle \right\rvert &= \left\lvert \int_{\X}  1 \left[\xi_v^x(s)(dz) -\bar{\xi}^x_v(s)(dz)  \right] \right\rvert \leq \left\lvert  \int_{ E \times \X}  \indic{x}(x\myprime) \, \left( \xi_v(s) -\bar{\xi}_v(s)\right)(dx\myprime,dz) \right\rvert \nonumber \\
&\leq  \sup_{\phi \leq 1}  \,  \lvert \langle \xi_v(s) - \bar{\xi}_v(s), \phi \rangle \rvert ,
\end{align*}
and by Remark \eqref{RemContrlReg1}:
\begin{align*}
 \int_0^t  \left\lvert\int_{\Vp}  \left[ \langle \bar{\xi}_v^x(s), 1\rangle- \langle \xiv^x(s), 1\rangle \right]  \phi_v(x,z) \,\bar{\xi}_v(s)(dx,dz) \right\rvert ds \leq e^{Ct} \, \max_{\alpha} \left( \langle \xi_\alpha(0), 1\rangle\right) \, \int_0^t  \sup_{\phi \leq 1}  \,  \lvert \langle \xi_v(s) - \bar{\xi}_v(s), \phi \rangle \rvert  \, ds.
\end{align*}
Additionally, also by Remark \eqref{RemContrlReg1} we have:
\begin{align*}
 \int_0^t  \left\lvert\int_{\Vp}  \langle \xiv^x(s), 1\rangle \,  \phiv(x,z) \left[\xi_v(s)(dx,dz)-\bar{\xi}_v(s)(dx,dz)\right] \right\rvert ds  \leq e^{Ct} \, \max_{\alpha} \left( \langle \xi_\alpha(0), 1\rangle\right) \, \int_0^t  \sup_{\phi \leq 1}  \,  \lvert \langle \xi_v(s) - \bar{\xi}_v(s), \phi \rangle \rvert  \, ds.
\end{align*}
Now we deal with the non-linear terms representing interaction with free vectors.  First, by Assumption \ref{AssPherates}, we have:
\begin{equation*}
  \int_{ \mathcal{D}} \beta(s,y,x,\langle \xi_v^x(s), 1\rangle,z) \,  \left[\xiu(s)(dy)-\bar{\xi}_u(s)(dy)\right]  \leq \bar{\beta} \,    \sup_{\phi \leq 1}  \,  \lvert \langle \xiu(s) - \bar{\xi}_u(s), \phi \rangle \rvert, 
\end{equation*}
and by Remark \eqref{RemContrlReg1} we deduce:
\begin{align*}
\int_0^t  &\left\lvert \int_{\Vp \times \D} \beta(s,y,x,\langle \xi_v^x(s), 1\rangle,z) \,  \left[\xiu(s)(dy)-\bar{\xi}_u(s)(dy)\right] \, \phiv(x,z) \, \xi_v(s)(dx,dz)  \right\rvert \, ds \nonumber \\
&\leq \bar{\beta} \, e^{Ct} \, \max_{\alpha} \left( \langle \xi_\alpha(0), 1\rangle\right) \, \int_0^t  \sup_{\phi \leq 1}  \,  \lvert \langle \xiu(s) - \bar{\xi}_u(s), \phi \rangle \rvert  \, ds.
\end{align*}
Analogously we also obtain:
\begin{align*}
\int_0^t  &\left\lvert \int_{\Vp \times \mathcal{D}} \beta(s,y,x,\langle \bar{\xi}_v^x(s), 1\rangle,z) \, \left[ \xiu(s)(dy) -  \bar{\xiu}(s)(dy) \right]  \, \phiv(x,z) \, \bar{\xi}_v(s)(dx,dz)  \right\rvert \, ds \nonumber \\
&\leq \bar{\beta} \, e^{Ct} \, \max_{\alpha} \left( \langle \xi_\alpha(0), 1\rangle\right) \, \int_0^t  \sup_{\phi \leq 1}  \,  \lvert \langle \xi_u(s) - \bar{\xi}_u(s), \phi \rangle \rvert  \, ds.
\end{align*}
The Lipschitz continuity of $\beta$ also implies the upper bound:
\begin{align*}
 \int_0^t  &\left\lvert \int_{\Vp \times \mathcal{D}} \left[ \beta(s,y,\langle \xi_v^x(s), 1\rangle,z)  -\beta(s,y,x,\langle \bar{\xi}_v^x(s), 1\rangle,z)  \right] \, \xi_u(s)(dy) \,  \phiv(x,z) \, \bar{\xi}_v(s)(dx,dz)  \right\rvert \, ds \nonumber \\
&\hspace{2cm}\leq \bar{\beta} \, e^{Ct} \, \left[ \max_{\alpha} \left( \langle \xi_\alpha(0), 1\rangle\right) \right]^2 \, \int_0^t  \sup_{\phi \leq 1}  \,  \lvert \langle \xi_u(s) - \bar{\xi}_u(s), \phi \rangle \rvert  \, ds.
\end{align*}
Altogether implies:
\begin{align*}
\left\lvert \langle \xi_v(t) -\bar{\xi}_v(t) , \phi_v\rangle \right\rvert &\leq C \left( \int_0^t \sum_{\alpha} \, \sup_{\phi \leq 1}  \,  \lvert \langle \xi_\alpha(s) - \bar{\xi}_\alpha(s), \phi \rangle \rvert  \, ds. \right)
\end{align*}
For $\alpha \in \lbrace u, c \rbrace$, we cannot proceed in the same way to bound 
\begin{align*}
\left\lvert \langle \xi_v(t) -\bar{\xi}_v(t) , \phi_\alpha \rangle \right\rvert. 
\end{align*}
 The reason for the additional difficulty is that for a generic measurable $\phi_\alpha$, the action of the generator $\L^\alpha$ is not necessarily well-defined. We will avoid this issue by using an approach similar to the one given in the proof of Theorem 4.2 in \cite{champagnat_invasion_2007}.\\

For $\alpha \in \lbrace u, c \rbrace $, we consider the semi-groups $P_\alpha(t)$ corresponding to the diffusion process with generator $\L^\alpha$. Let us fix a function $\phi_\alpha \in L^\infty(\V_\alpha)$ with $\norm{\phi_\alpha } \leq 1$, take $t \in [0,T]$, and define the following function:
\begin{equation*}
  \phi_\alpha(s, y_\alpha) = P_\alpha (t-s) \phi(y_\alpha) \quad \forall y_\alpha \in \V_\alpha. 
\end{equation*}
By construction $\phi_\alpha(s, y_\alpha)$ is a solution of the boundary problem:
\begin{align*}
    \partial_s \phi_\alpha(s, y_\alpha) &- \L^\alpha \phi_\alpha(s, y_\alpha) = 0 \quad \text{ on } [0,T] \times \V_\alpha  \nonumber \\
    \nabla \phi_\alpha \cdot \bar{n}(s, y_\alpha) &= 0 \quad \text{ on } [0,T] \times \partial \V_\alpha  \nonumber \\
    \lim_{ s \to t} \phi_\alpha(s, y_\alpha) &=  \phi_\alpha( y_\alpha) \quad \text{ on } \V_\alpha \nonumber
\end{align*}
where, to save some space, we have made a slight abuse of notation by using $\partial \Vc = \partial \D \times \X$.\\

Using the weak time-space formulation given in \eqref{WEAKST2} and \ref{WEAKST3}, we obtain:
\begin{align*}
\langle \xiu(t) , \phiu \rangle &=  \langle \xiu(0) , P_u(t) \phiu \rangle -  \int_0^t  \int_{\Vp \times \D} \beta(s,y,x,\langle \xi_v^x(s), 1\rangle,z) P_u(t-s)\phiu(y) \xiv(s)(dx,dz) \,  \xiu(s)(dy) \, ds \nonumber \\
&+\int_0^t   \int_{\Vc \times E} \eta(s,y,x,z) P_u(t-s)\phiu(y)) \xic(s)(dy,dz) \,  dx \,   ds + \int_0^t \int_{\Vc} \gamma(z) P_u(t-s)\phiu(y) \, \xic(s)(dy,dz) ds,  \nn 
\end{align*}
and 
\begin{align*}
\langle \xic(t) , \phic\rangle&= \langle \xic(0) , P_c(t)\phic\rangle + \int_0^t  \int_{\Vp \times \mathcal{D}} \beta(s,y,x,\langle \xi_v^x(s), 1\rangle,z) P_c(t-s) \phic(y,z)  \xiv(s)(dx,dz) \,  \xiu(s)(dy) \, ds \nonumber \\
&-  \int_0^t   \int_{\Vc \times E} \eta(s,y,x,z) P_c(t-s) \phic(y,z)   \xic(s)(dy,dz) \,  dx \,   ds -\int_0^t \int_{\Vc} \gamma(z) P_c(t-s) \phic(y,z)  \, \xic(s)(dy,dz) \,  ds. \nn
\end{align*}
Analogous estimates to those used for the case $\alpha=v$, give
\begin{align*}
\left\lvert \langle \xiu(t) -\bar{\xi}_u(t) , \phiu\rangle \right\rvert &\leq C \left( \int_0^t \sum_{\alpha} \, \sup_{\phi \leq 1}  \,  \lvert \langle \xi_\alpha(s) - \bar{\xi}_\alpha(s), \phi_\alpha \rangle \rvert  \, ds \right) \nn 
\end{align*}
and 
\begin{align*}
\left\lvert \langle \xic(t) -\bar{\xi}_c(t) , \phic\rangle \right\rvert &\leq C \left( \int_0^t \sum_{\alpha} \, \sup_{\phi \leq 1}  \,  \lvert \langle \xi_\alpha(s) - \bar{\xi}_\alpha(s), \phi_\alpha \rangle \rvert  \, ds \right) \nn 
\end{align*}
where we have picked the biggest constant $C$ of all cases. Taking first the sup on the LHS of each case, and then summation gives the inequality:
\begin{align*}
\sum_{\alpha} \, \sup_{\phi \leq 1}  \,  \left\lvert \langle \xi_\alpha(t) -\bar{\xi}_\alpha(t) , \phi_\alpha\rangle \right\rvert &\leq C \left( \int_0^t \sum_{\alpha} \, \sup_{\phi \leq 1}  \,  \lvert \langle \xi_\alpha(s) - \bar{\xi}_\alpha(s), \phi_\alpha \rangle \rvert  \, ds \right), \nn 
\end{align*}
and  by Gronwall's lemma we conclude uniqueness.

\subsubsection*{Uniform estimates}
Let us first remark that, under Assumption \ref{HydroAssump}, we have the following estimate:
\begin{equation}\label{unifoverKontrol}
    \sup_{K \in \N} \E \left[ \sup_{ t \in [0,T]} \langle \nu_\alpha^{(K)}(t), 1 \rangle^3 \right] < +\infty, 
\end{equation}
for every $\alpha \in \lbrace v,u,c \rbrace$.\\

To see that this is the case, notice that by replicating the  procedure of the proof of Proposition \ref{Propcontrolp}, with $p=3$, we can obtain an analogous expression to \eqref{GronwApp}, namely
\begin{equation*}
 \E \left[ \sup_{t \in [0,T \wedge \tau_n^{(K)}] } \langle  \nu_\alpha^{(K)}(t),1 \rangle^3 \right] \leq   C e^{BT}
\end{equation*}
where the constant $C$ is independent of $K$ and $n$. Just as in Proposition \ref{Propcontrolp} we conclude that $\tau_n^{(K)} \to \infty$, as $K \to \infty$ for some well chosen $n$, and obtain \eqref{unifoverKontrol} by Fatou's lemma.\\

\begin{remark}
As a consequence of estimate \eqref{unifoverKontrol} we have that for every $\alpha \in \lbrace v,u,c \rbrace$
\begin{equation}\label{unifoverKontrol3}
    \sup_{K \in \N} \E \left( \sup_{ t \in [0,T]} \langle \nu_\alpha^{(K)}(t), \phi_\alpha \rangle^3 \right) < \infty 
\end{equation}
for all $\phi_\alpha$ as in Definition \ref{correncod}.
\end{remark}

\subsubsection*{Tightness}
 The goal of this section is to verify that, for each $\alpha \in \lbrace v, u, c \rbrace$, the sequence of laws $Q_\alpha^K$ of the processes $\nu_\alpha^{(K)}$ are uniformly tight in $\P(\D([0,T],\M_F^\alpha))$. Here we specialize to the case $\lambda=1$. The other case is simpler for the vector populations, and for the virus population it is virtually done the same way as for $\lambda=1$. Let us first introduce the following lemma:
 
 \begin{lemma}\label{lemmakmart}
Under Assumption \ref{Assbetakreg2} we have the following càdlàg martingales:
\begin{enumerate}
\item For the virus population:
\begin{align}\label{kMartH}
M_K^{\phiv}(t) &= \langle \nuv^{(K)}(t) , \phiv\rangle- \langle \nuv^{(K)}(0) , \phiv\rangle - \int_0^t \int_{\Vp} b(x,z) \phiv(x,z) \nuv^{(K)}(s)(dx,dz) \,  ds \nonumber \\
&\hspace{-1cm}+ \mu \int_0^t \int_{\Vp} b(x,z)\left[ \phiv(x,z) -\int_{ \X} m(z,e) \phiv(x,e) \, de  \right] \nuv^{(K)}(s)(dx,dz) \,  ds \nonumber \\
&\hspace{-1cm}+  \int_0^t  \int_{\Vp} \left( d(z) + c \, \langle (\nu_v^{(K)})^{x}, 1\rangle \right) \phiv(x,z) \nuv^{(K)}(s)(dx,dz) ds \nonumber \\
&+ \int_0^t \int_{\Vp \times \D} \beta(s,y,x, \langle (\nu_v^{(K)})^{x}, 1\rangle,z) \, \phiv(x,z) \nuv^{(K)}(s)(dx,dz) \,  \nuu^{(K)}(s)(dy)  ds \nonumber \\
&- \int_0^t   \int_{\Vc \times E} \eta(s,y,x,z) \phiv(x,z)  \,  \nuc^{(K)}(s)(dy,dz) \, dx \,   ds 
\end{align}
is a càdlàg martingale with predictable quadratic variation
\begin{align}\label{kMartHQV}
\langle M_K^{\phiv} \rangle_t &=   \frac{(1-\mu)}{K} \int_0^t \int_{\Vp} b(x,z) \, \phiv(x,z)^2 \, \nuv^{(K)}(s)(dx,dz) \,  ds \nonumber \\
&+  \frac{\mu}{K}  \int_0^t \int_{\Vp \times \X} b(x,z) m(z,e)  \,  \phiv(x,e)^2  \, de  \, \nu_v^{(K)}(s)(dx,dz) \,  ds \nonumber \\
&+ \frac{1}{K} \int_0^t  \int_{\Vp} \left( d(z) + c \, \langle (\nuv^{(K)})^{x}, 1\rangle \right)  \,   \phiv(x,z)^2 \, \nuv^{(K)}(s)(dx,dz)  \nonumber \\
&+ \frac{1}{K} \int_0^t  \int_{\Vp \times \D} \beta(s,y,x, \langle (\nu_v^{(K)})^{x}, 1\rangle,z) \,   \phiv(x,z)^2 \, \nuv^{(K)}(s)(dx,dz) \,  \nuu^{(K)}(s)(dy)  ds \nonumber \\
& + \frac{1}{K} \int_0^t   \int_{\Vc \times E} \eta(s,y,x,z) \,  \phiv(x,z)^2   \,  \nuc^{(K)}(s)(dy,dz) \, dx \,   ds.
\end{align}
\item For the uncharged vector population:
\begin{align}\label{kMartF}
M_K^{\phiu}(t) &= \langle \nuu^{(K)}(t) , \phiu\rangle- \langle \nuu^{(K)}(0) , \phiu\rangle - \int_0^t \int_{\D} \L^u \phiu(y) \, \nuu^{(K)}(s)(dy) \, ds \nonumber \\
&+ \int_0^t  \int_{\Vp \times \D} \beta(s,y,x, \langle (\nu_v^{(K)})^{x}, 1\rangle,z) \, \phiu(y) \nuv^{(K)}(s)(dx,dz) \,  \nuu^{(K)}(s)(dy) \, ds \nonumber \\
& - \int_0^t   \int_{\Vc \times E} \eta(s,y,x,z) \phiu(y) \nuc^{(K)}(s)(dy,dz) \,  dx \,   ds -\int_0^t \int_{\Vc} \gamma(z) \phiu(y) \, \nuc^{(K)}(s)(dy,dz) ds 
\end{align}
is a càdlàg martingale with predictable  quadratic variation
\begin{align}\label{kMartFQV}
\langle M_K^{\phiu} \rangle_t &=  \frac{1}{K} \int_0^t \int_{\D} \sigma^u(y)^2  \, |\nabla \phiu(y) |^2 \, \nuu^{(K)}(s)(dy) \, ds \nn \\
&+ \frac{1}{K}\int_0^t  \int_{\Vp \times \D} \beta(s,y,x, \langle (\nu_v^{(K)})^{x}, 1\rangle,z) \,   \phiu(y)^2 \, \nuv^{(K)}(s)(dx,dz) \,  \nuu^{(K)}(s)(dy)  ds \nonumber \\
&+ \frac{1}{K}  \int_0^t  \int_{\Vc \times E} \eta(s,y,x,z) \phiu(y)^2 \nuc^{(K)}(s)(dy,dz) \,  dx \,   ds +\frac{1}{K}\int_0^t \int_{\Vc} \gamma(z) \phiu(y)^2 \, \nuc^{(K)}(s)(dy,dz) \, ds. 
\end{align}
\item For the charged-vector population:
\begin{align}\label{kMartG}
M_K^{\phic}(t) &= \langle \nuc^{(K)}(t) , \phic\rangle- \langle \nuc^{(K)}(0) , \phic\rangle -\int_0^t \int_{\Vc} \L^c \phic(y,z) \, \nuc^{(K)}(s)(dy,dz) \, ds \nonumber \\
&-  \int_0^t  \int_{\Vp \times \D} \beta(s,y,x, \langle (\nu_v^{(K)})^{x}, 1\rangle,z) \, \phic(y,z) \nuv^{(K)}(s)(dx,dz) \,  \nuu^{(K)}(s)(dy) \, ds \nonumber \\ 
& + \int_0^t   \int_{\Vc \times E} \eta(s,y,x,z) \phic(y,z) \nuc^{(K)}(s)(dy,dz) \,  dx \,   ds +  \int_0^t \int_{\Vc} \gamma(z) \phic(y,z) \, \nuc^{(K)}(s)(dy,dz)   ds
\end{align}
is a càdlàg martingale with quadratic variation
\begin{align}\label{kMartGQV}
\langle M_K^{\phic} \rangle_t  &=  \frac{1}{K}\int_0^t \int_{\Vc} \sigma^c(y)^2 | \nabla_y \phic(y,z) |^2 \, \nuc^{(K)}(s)(dy,dz) \, ds \nonumber \\
&+  \frac{1}{K}\int_0^t  \int_{\Vp \times \D} \beta(s,y,x, \langle (\nu_v^{(K)})^{x}, 1\rangle,z) \, \phic(y,z)^2 \nuv^{(K)}(s)(dx,dz) \,  \nuu^{(K)}(s)(dy) \, ds \nonumber \\ 
& + \frac{1}{K}  \int_0^t   \int_{\Vc \times E} \eta(s,y,x,z) \phic(y,z)^2 \nuc^{(K)}(s)(dw,du) \,  dx \,   ds +\frac{1}{K} \int_0^t \int_{\Vc} \gamma(y) \phic(y,z)^2 \, \nuc^{(K)}(s)(dy,dz)  \, ds.
\end{align}
\end{enumerate}
\end{lemma}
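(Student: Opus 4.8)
The plan is to obtain each of the three identities as the Dynkin semimartingale decomposition of the Markov process $\Lambda_K$ associated with its generator $\L^{(K)} = \L_1^{(K)} + \L_2^{(K)}$, exactly as in the unscaled case treated in the Appendix (the rescaled analogue of Proposition~\ref{MartingalesProp}). Concretely, for a cylindrical $F_\phivec \in \f_C$ the process $t \mapsto F_\phivec(\Lambda_K(t)) - F_\phivec(\Lambda_K(0)) - \int_0^t \L^{(K)} F_\phivec(\Lambda_K(s))\,ds$ is a martingale. I would first record that, under Assumption~\ref{Assbetakreg2}, the moment bounds behind Theorem~\ref{Theoremwelldefined} (and the uniform estimate \eqref{unifoverKontrol}, established via Proposition~\ref{Propcontrolp}) make the compensator and quadratic variation below integrable, so that these are genuine square-integrable martingales and not merely local ones. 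Throughout, the displayed prefactors correspond to the specialization $\lambda = 1$ treated in this subsection, for which the factors $K^{1-\lambda}$ and $K^{1-2\lambda}$ appearing in $\L_2^{(K)}$ reduce to $1$ and $K^{-1}$.

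For the drift I would apply this decomposition to the affine cylindrical functions $F(u,v,w) = u$, $F(u,v,w)=v$ and $F(u,v,w)=w$, so that $F_\phivec = \langle \phi_\alpha, \nu_\alpha \rangle$ for $\alpha \in \lbrace v,u,c \rbrace$ respectively. Since such $F$ are affine, $F'' \equiv 0$ annihilates the second-order term of $\L_2^{(K)}$, while $F' \equiv 1$ in the active coordinate simply selects the first-order terms. In $\L_1^{(K)}$ every event fires at a rate carrying a prefactor $K$, whereas the rescaled atoms have size $K^{-1}$ for the virus and $K^{-\lambda}$ for the vectors; the telescoping difference $F_\phivec(\nuvec \pm \tfrac{1}{K}\delta) - F_\phivec(\nuvec) = \pm \tfrac{1}{K}\phi_\alpha(\cdot)$ then cancels the $K$ and reproduces exactly the integrals in \eqref{kMartH}, \eqref{kMartF} and \eqref{kMartG}. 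No diffusive term survives in \eqref{kMartH} because $\nuv$ is not transported and hence does not appear in $\L_2^{(K)}$.

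For the predictable quadratic variations I would apply the same decomposition to the quadratic cylindrical functions $F(u,v,w) = u^2, v^2, w^2$ and invoke the carré-du-champ identity
\begin{equation*}
\langle M_K^{\phi_\alpha} \rangle_t = \int_0^t \left( \L^{(K)} F_\phivec^2 - 2\langle \phi_\alpha, \nu_\alpha^{(K)} \rangle \, \L^{(K)} F_\phivec \right)(\Lambda_K(s))\,ds,
\end{equation*}
valid since $M_K^{\phi_\alpha}$ is the Dynkin martingale of $F_\phivec = \langle \phi_\alpha, \nu_\alpha \rangle$. For the jump part the linear cross-terms cancel and one is left with $\sum_{\text{events}} (\text{rate}) \, (\Delta \langle \phi_\alpha, \nu_\alpha^{(K)} \rangle)^2$; since each event moves the $\alpha$-mass by a single atom of size $K^{-1}$ (virus) or $K^{-\lambda}$ (vectors), squaring produces the overall $K^{-1}$ prefactor together with $\phi_\alpha(\cdot)^2$, matching \eqref{kMartHQV}, \eqref{kMartFQV} and \eqref{kMartGQV}. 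For the diffusive part, with the outer function $F(v) = v^2$ one computes $\L_2^{(K)} F_\phivec^2 - 2\langle \phi_\alpha, \nu_\alpha^{(K)} \rangle \L_2^{(K)} F_\phivec = K^{1-2\lambda} \int (\sigma^\alpha)^2 |\nabla \phi_\alpha|^2 \, \nu_\alpha^{(K)}$, which at $\lambda = 1$ is the stated gradient term.

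The computation itself is bookkeeping; the only genuine points are (i) integrability of the drift and quadratic-variation integrands, so the decomposition yields true $L^2$-martingales rather than local ones, and (ii) the boundary behaviour of the diffusive part. For (i) the delicate terms are the quadratic competition contribution $c\langle (\nu_v^{(K)})^x,1\rangle$ and the bilinear loading contribution $\beta \, \nuv^{(K)} \, \nuu^{(K)}$; controlling their expectations and the corresponding quadratic-variation terms is precisely what the second- and third-order moment bounds of Theorem~\ref{Theoremwelldefined} and \eqref{unifoverKontrol} supply. For (ii), because $\phiu, \phic$ lie in $D(\L^u), D(\L^c)$ and thus satisfy the Neumann condition $\nabla \phi \cdot \vec{n} = 0$ on $\partial \D$, It\^o's formula for the normally reflected diffusion carries no boundary local-time term; this is what makes $\L^u, \L^c$ the correct generators in \eqref{kMartF}--\eqref{kMartG} and leaves the quadratic variations \eqref{kMartFQV}--\eqref{kMartGQV} free of boundary contributions. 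I expect (i) to be the main obstacle, since it is where the nonlinearity of the dynamics interacts with the rescaling in $K$.
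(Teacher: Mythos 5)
Your proposal is correct and follows essentially the same route as the paper: the authors prove this lemma as a "replica" of Proposition \ref{MartingalesProp}, i.e., Dynkin's theorem applied to linear cylindrical functions for the drift martingales and to quadratic ones (via It\^o and Doob--Meyer, which is your carré-du-champ identity) for the predictable quadratic variations, with the moment bounds of Proposition \ref{Propcontrolp} upgrading local martingales to true ones, and the extra powers of $K$ tracked exactly as you do. Your additional remarks on the $\lambda=1$ specialization and the absence of boundary local-time terms are consistent with the paper's setup and do not change the argument.
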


\begin{proof}
The proof of this lemma is a replica of Proposition \ref{MartingalesProp} taking into consideration the extra powers of $K$ appearing in the generators.
\end{proof}
 
 In order to show tightness, and given the control given by Proposition \ref{Propcontrolp}, it is enough  (see for example Corollary 7.4  in \cite{ethier2009markov}) to show that for each $\alpha \in \lbrace v, u, c \rbrace $ and $\varepsilon >0$ there exists $\delta>0$ and  $\rho >0$, such that:
\begin{equation}
    \sup_{ K \in \N} \P\left(  \omega(M_K^\alpha, \delta, T) > \rho \right) \leq \varepsilon, \text{ and } \sup_{ K \in \N} \P\left(  \omega( \langle M_K^\alpha \rangle, \delta, T) > \rho \right) \leq \varepsilon,
\end{equation}
where $M_K^{(\alpha)}$ are given as in Lemma \ref{lemmakmart} with $\phi_\alpha=1$, and for $\delta>0$, $\omega(X,\delta,T)$ denotes the standard modulus of continuity:
\begin{equation*}
\omega(X,\delta,T)= \sup_{\substack{t,s \in [0,T]\\ |t-s| \leq \delta}} |X(t)-X(s)|.
\end{equation*}
Let us show how to control de modulus of continuity for the predictable quadratic variation process $\langle M_K^\alpha \rangle$.  
Let $\delta >0$, and consider stopping times $\tau_K, \tau\myprime_K$ such that:
\[
0 \leq \tau_K \leq \tau\myprime_K \leq \tau_K + \delta \leq T
\]
By Doob's inequality, we have:
\begin{align}\label{Doobsapp}
\E \left( \langle M_K^{\alpha}  \rangle_{\tau_K} -   \langle M_K^{\alpha}  \rangle_{\tau\myprime_K}  \right) \leq \sum_{\alpha\myprime} \E \left( C \int_{\tau_K}^{\tau_K + \delta} \left( \langle \nu_{\alpha\myprime}^{(K)}(s), 1 \rangle + \langle \nu_{\alpha\myprime}^{(K)}(s), 1 \rangle^2 \right) \, ds \, \right)
\end{align}
where we used the fact that the quadratic variation processes are given explicitly in \eqref{kMartHQV}, \eqref{kMartFQV} and \eqref{kMartGQV}, together with the technique used in the proof of Proposition \ref{ControlJumpRate}. By \eqref{unifoverKontrol}, and a redefinition of the constant $C$, we obtain:
\begin{align*}
\E \left( \langle M_K^{\alpha}  \rangle_{\tau_K} -   \langle M_K^{\alpha}  \rangle_{\tau\myprime_K}  \right) \leq C \delta 
\end{align*}

We conclude the proof of uniform tightness of the sequence $Q_\alpha^{K}$.

\subsection{Proof of Theorem \ref{IDElambdaequal}}\label{firstproof}
We start by noticing that since both populations are comparable in order, i.e. $\lambda=1$, the time rescaling of the diffusive generators for the population of vectors reduces to normal speed. In this section we will mainly follow the approach given in the proof of Theorem 4.2 from \cite{champagnat_invasion_2007}. This approach requires to control the quadratic variation of  some relevant martingales and show that, as consequence of the vanishing quadratic variation, the martingales themselves also vanish as $K \to \infty$. Lemma \ref{lemmakmart} introduced the above mentioned martingales and their quadratic variations.

\subsubsection{Characterization of limit points for Theorem \ref{IDElambdaequal}}\label{CharactLamdaequal}
By tightness we know that for $\alpha \in \lbrace v, u, c \rbrace$ the sequence of measures $Q_\alpha^K$  contains at least a convergent sub-sequence. Let us denote by $Q_\alpha$ the limiting law in $\P(\D([0,T],\M_F(D_\alpha)))$ of any such sub-sequence, which by an abuse of notation we still denote by $Q_\alpha^K$. We first want to argue that every process $\xi_\alpha$ with law $Q_\alpha$ is almost surely strongly continuous. We can see that this is indeed the case from the following estimate:
\begin{equation*}
 \sup_{ t \in [0,T]} \lvert \langle \nu_\alpha^{(K)}(t), \phi_\alpha \rangle - \langle \nu_\alpha^{(K)}(t-), \phi_\alpha \rangle \rvert \leq \frac{\norm{\phi_\alpha}_\infty}{K}
\end{equation*}
which is true because at every jump event we either kill, create, or re-distribute one virus. In order to show that $Q_\alpha$ only charges the continuous processes we follow the lines of Step 5, page 54 of \cite{meleard_stochastic_2015}. Since, for each $\phi_\alpha$, the mapping $t \mapsto  \sup_{ t \in [0,T]} \lvert \langle \nu_\alpha(t), \phi_\alpha \rangle - \langle \nu_\alpha(t-), \phi_\alpha \rangle \rvert $ is continuous on $\D([0,T],\M_F(D_\alpha))$ we can deduce that $Q_\alpha$ only charges continuous processes from $[0,T]$ to $M_F(\D_\alpha)$ endowed with the vague topology. To extend this reasoning to the case of $M_F(\D_\alpha)$ being endowed with the weak topology we can follow the lines of Step 6, page 24, in \cite{champagnat_individual_2008}. \\

We now show that the limits $\xi_v, \xiu$ and $\xic$ indeed satisfy the system of IDE's. We then introduce the following quantities:
\begin{align*}
M_t^{v,h} (\xiv,\xiu,\xic) &:= \langle \xiv(t) , h\rangle   -\langle \xiv(0) , h\rangle -\int_0^t \int_{\Vp} b(x,z) h(x,z) \, \xiv(s)(dx,dz) \,  ds \nonumber \\
&+ \mu \int_0^t \int_{\Vp} b(x,z)\left[ h(x,z) -\int_{ \mathcal{X}} m(z,e) h(x,e) \, de  \right] \xiv(s)(dx,dz) \,  ds \nonumber \\
&+  \int_0^t  \int_{\Vp}  \left[ d(z) + c \langle \xiv(s)^x, 1\rangle \right] h(x,z) \xiv(s)(dx,dz) ds \nonumber \\
&+ \int_0^t  \int_{\Vp \times \mathcal{D}} \beta(s,y,x,\langle \xiv(s)^x, 1\rangle,z) h(x,z) \, \xiv(s)(dx,dz) \,  \xiu(s)(dy) \, ds \nonumber \\
&- \int_0^t   \int_{\mathcal{I} \times E} \eta(s,y,x,z) h(x,z)  \,  \xic(s)(dy,dz) \, dx \,   ds,
\end{align*}
\begin{align*}
M_t^{u,f} (\xiv,\xiu,\xic) &:= \langle \xiu(t) , f\rangle -  \langle \xiu(0) , f\rangle -\int_0^t \int_{\mathcal{D}} \mathcal{L}^u f(y) \, \xiu(s)(dy) \, ds \nonumber \\
&+  \int_0^t  \int_{\Vp \times \mathcal{D}} \beta(s,y,x,\langle \xiv(s)^x, 1\rangle,z) f(y) \xiv(s)(dx,dz) \,  \xiu(s)(dy) \, ds \nonumber \\
&-\int_0^t   \int_{\Vc \times E} \eta(s,w,x,u) f(w) \xic(s)(dw,du) \,  dx \,   ds - \int_0^t \int_{\Vc} \gamma(u) f(w) \, \xic(s)(dw,du) ds, 
\end{align*}
and
\begin{align*}
M_t^{c,g}  (\xiv,\xiu,\xic) &:= \langle \xic(t) , g\rangle- \langle \xic(0) , g\rangle -\int_0^t \int_{\Vc} \mathcal{L}^c g(w,u) \, \xic(s)(dw,du) \, ds \nonumber \\
&- \int_0^t  \int_{\Vc \times \mathcal{D}} \beta(s,y,x,\langle \xiv(s)^x, 1\rangle,z) g(y,z) \xiv(s)(dx,dz) \,  \xiu(s)(dy) \, ds \nonumber \\
&+  \int_0^t   \int_{\Vc \times E} \eta(s,w,x,u) g(w,u) \xic(s)(dw,du) \,  dx \,   ds +\int_0^t \int_{\Vc} \gamma(u) g(w,u) \, \xic(s)(dw,du) \,  ds.
\end{align*}
In order to verify that the processes indeed satisfy the system of IDE's, our goal is to show that we have:
\begin{align*}
    \E \left[ \lvert M_t^{\alpha,\phi_\alpha}  (\xiv,\xiu,\xic)\rvert  \right] &= 0, 
\end{align*}
for all $\alpha \in \lbrace v, u, c \rbrace.$\\

Notice that the following relations hold:
\begin{equation*}
M_K^{\phi_\alpha}(t)  =  M_t^{\alpha, \phi_\alpha}  (\nuv^{(K)}(t),\nuu^{(K)}(t),\nuc^{(K)}(t)), \qquad
\E \left( \lvert M_K^{\phi_\alpha}(t)  \rvert^2  \right) =     \E \left( \langle M_K^{\phi_\alpha} \rangle_t\right), 
\end{equation*}
for all $\alpha \in \lbrace v, u, c \rbrace.$ By Assumption \ref{AssPherates} , Proposition \ref{lemmakmart}, and similar estimates than the ones used in \eqref{Doobsapp}, we have:
\begin{align*}
\E \left( \lvert M_K^{\phi_\alpha}(t)  \rvert^2  \right) &\leq  \frac{C_{\phi_\alpha,t}^\alpha}{K}, 
\end{align*}
where the constants $C_{\phi_\alpha,t}^\alpha$, might depend on time and the uniform norm of the functions $\phi_\alpha$, but do not depend on the scaling parameter $K$.\\

To conclude we need to show:
\begin{align*}
  \lim_{K \to \infty}  \E \left( \lvert M_t^{\alpha, \phi_\alpha} (\nuv^{(K)}(t),\nuu^{(K)}(t),\nuc^{(K)}(t) \rvert  \right) &= \E \left( \lvert M_t^{\alpha, \phi_\alpha} (\xiv, \xiu,\xic) \rvert  \right), 
\end{align*}
along any convergent sub-sequence of  $\lbrace (\nuv^{(K)}(t),\nuu^{(K)}(t),\nuc^{(K)}(t)) \rbrace_{K \geq 1}$. This is true if we are able to show uniform integrability for each collection $\lbrace M_t^{\alpha, \phi_\alpha}  (\nuv^{(K)}(t),\nuu^{(K)}(t),\nuc^{(K)}(t)) \rbrace_{K \geq 1}$, for all $\alpha \in \lbrace v, u, c \rbrace.$ \\

In order to do so, let $\alpha \in \lbrace v, u, c \rbrace$, and let us show that it is indeed the case that $\lbrace M_t^\alpha  (\nuv,\nuu,\nuc) \rbrace_{K \geq 1}$ is uniformly integrable. First notice, using Assumption \ref{AssPherates} and uniform estimates over the parameters and the test functions, that for any $(\nu_1,\nu_2,\nu_3)$ we have:
\begin{align*}
\lvert M_t^\alpha  (\nu_1,\nu_2,\nu_3) \rvert  &\leq C(t,\phi_v,\phi_S,\phi_I) \sup_{t \in [0,T]} \left( 1 + \langle \nu_1, 1\rangle^2 +  \langle \nu_2, 1\rangle^2 + \langle \nu_3, 1\rangle^2 \right),
\end{align*}
and by \eqref{unifoverKontrol3} we have uniform integrability. This finishes the proof.

\subsection{Proof of Theorem \ref{IDElambdaless}}\label{secondproof}
Given the assumptions on Theorem \ref{IDElambdaless}, it is not straightforward to adapt the proof of characterization of limit points of Theorem \ref{IDElambdaequal} to this case. The main difficulty is that we cannot control the quadratic variations \eqref{kMartFQV} and \eqref{kMartGQV} as directly as in the case $\lambda=1$. However, we can make use of the averaging principle for slow-fast systems introduced in \cite{kurtz1992averaging}, and avoid the need to directly control those quadratic variations. We postpone the proof of Theorem \ref{IDElambdaless} for Section \ref{prfIDElambdaless}. First we introduce the context of Kurtz's averaging principle for martingale problems.

\subsubsection{Averaging for martingale problems}

The proof of  Theorem \ref{IDElambdaless}  is based on Theorem 2.1 from \cite{kurtz1992averaging}. For the sake of readability we will now include this theorem without proof. We refer the reader to \cite{kurtz1992averaging} for relevant definitions and a proof of the theorem.

\begin{theorem}[T. Kurtz, 1992]\label{AverageThm}
Let $\s_1$ and $\s_2$ be complete separable metric spaces, and set $\s = \s_1 \times \s_2$. For each $K$, let $\lbrace (X_K, Y_K) \rbrace$ be a stochastic process with sample paths in $D([0,\infty), \s)$ adapted to a filtration $\lbrace \f_t^K \rbrace$. Assume that $\lbrace X_K \rbrace$ satisfies the compact containment condition, that is, for each $\eps >0$ and $T>0$, there exists a compact set $\C \in \s_1$ such that
\begin{equation}\label{compccont}
    \inf_{K} \P \left[ X_K(t) \in \C, t \leq T \right] \geq 1-\eps,
\end{equation}
and assume that $\lbrace Y_K(t): t \geq 0,K \in \N \rbrace$ is relatively compact (for every $t \geq 0$, and as a collection of $\s_2$-valued random variables). Suppose the existence of an operator $A: \D(A) \subset \C_b(\s_1) \to \C_b(\s)$ such that, for every $f \in \D(A)$, there exists a process $\epsilon_K^f$ for which
\begin{equation*}
    f(X_K(t)) - \int_0^t A f(X_K(s),Y_K(s)) ds + \epsilon_K^f(t)
\end{equation*}
is an $\f_t^K$-martingale. Let $\D(A)$ be dense in $\C_b(\s_1)$ in the topology of uniform convergence on compact sets. Suppose that for each $f \in \D(A)$ and each $T>0$, there exists $p>1$ such that
\begin{equation*}
    \sup_{K} \E \left[ \int_0^T \mid A f(X_K(t),Y_K(t)) \mid^p dt \right] < \infty,
    \end{equation*}
and 
\begin{equation}\label{epsK}
    \lim_{K \to \infty} \E \left[ \sup_{t \leq T}  \mid \epsilon_K^f(t) \mid \right] = 0.
\end{equation}
Let $\Gamma_K$ be the $l_m(\s_2)$-valued random variable given by
\begin{equation*}
    \Gamma_K ([0,t] \times B) = \int_0^t \indic{ B}( Y_K(s)) ds.
\end{equation*}
Then $\lbrace X_K, \Gamma_K \rbrace$ is relatively compact in $D([0,\infty),\s_1) \times l_m(\s_2)$ and for any limit point $\left( X, \Gamma \right)$ there exists a filtration $\lbrace \g_t  \rbrace$ such that
\begin{equation*}
    f(X(t)) - \int_0^t \int_{\s_2} A f(X(s),y) \Gamma(ds \times dy)
\end{equation*}
is a $\g_t$-martingale for each $f \in \D(A)$.
\end{theorem}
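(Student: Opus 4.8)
The plan is to run the standard two-step program for averaging results of this type: first establish that the pair $\{(X_K,\Gamma_K)\}$ is relatively compact in $D([0,\infty),\s_1) \times l_m(\s_2)$, and then identify every limit point through the limiting martingale problem. I would treat the two components separately before combining them.

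For relative compactness of the slow component $\{X_K\}$, I would exploit the assumed martingale representation together with the compact containment condition \eqref{compccont}. For each $f \in \D(A)$ the process $f(X_K(\cdot))$ decomposes as a martingale $M_K^f$, the finite-variation term $\int_0^\cdot Af(X_K(s),Y_K(s))\,ds$, and the error $\epsilon_K^f$. The uniform $L^p$ bound on $Af$ (with $p>1$) yields uniform integrability of the integrand, hence a controlled modulus of continuity for the finite-variation part, while \eqref{epsK} removes $\epsilon_K^f$ in the limit; an Aldous--Rebolledo argument then gives relative compactness of each real process $\{f(X_K)\}$. Since $\D(A)$ is dense in $\C_b(\s_1)$ (so it separates points) and compact containment holds, the Ethier--Kurtz criterion lifts this to relative compactness of $\{X_K\}$ itself. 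For the occupation measures $\{\Gamma_K\}$, relative compactness in $l_m(\s_2)$ reduces to tightness of the time-averaged $\s_2$-marginals; since $\E[\Gamma_K([0,t]\times B)] = \int_0^t \P(Y_K(s)\in B)\,ds$ and $\{Y_K(t):t\geq 0, K\in\N\}$ is relatively compact, these marginals are tight, giving the claim. Relative compactness of the pair follows.

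For the identification of limits, I would pass to a convergent subsequence $(X_K,\Gamma_K)\Rightarrow(X,\Gamma)$ and, by the Skorokhod representation theorem, realize the convergence almost surely on a common space. The essential rewriting is $\int_0^t Af(X_K(s),Y_K(s))\,ds = \int_{[0,t]\times\s_2} Af(X_K(s),y)\,\Gamma_K(ds\,dy)$, and the whole proof turns on showing this converges to $\int_{[0,t]\times\s_2} Af(X(s),y)\,\Gamma(ds\,dy)$. Granting this, the defining identity $M_K^f(t) = f(X_K(t)) - \int_0^t Af(X_K,Y_K)\,ds + \epsilon_K^f(t)$ passes to the limit: the uniform $L^p$ control makes $\{M_K^f(t)\}$ uniformly integrable, and testing against bounded continuous functionals $H$ of the path up to a time $s<t$ gives $\E[(M^f(t)-M^f(s))H]=\lim_K \E[(M_K^f(t)-M_K^f(s))H]=0$, so that $f(X(t)) - \int_{[0,t]\times\s_2} Af(X(s),y)\,\Gamma(ds\,dy)$ is a $\g_t$-martingale for the filtration generated by the limit.

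The main obstacle is precisely the convergence of the averaged generator term, because the integrand $Af(X_K(s),y)$ depends on $K$ through the slow path and weak convergence of the pair does not hand this to us directly. I would approximate $Af$ uniformly on the relevant compact sets (available through compact containment in $\s_1$ and tightness of the $\s_2$-marginals) by finite sums of products $a(x)b(y)$, reducing matters to terms $\int_0^t a(X_K(s))b(Y_K(s))\,ds$. Here I would use that Skorokhod convergence $X_K\to X$ forces $X_K(s)\to X(s)$ at every continuity point of $X$, hence for Lebesgue-almost every $s$; since $X$ has at most countably many jumps and $\Gamma$ has Lebesgue time-marginal, bounded convergence lets me replace $a(X_K(s))$ by $a(X(s))$ inside the integral, after which weak convergence of $\Gamma_K$ against the fixed integrand $a(X(\cdot))b(\cdot)$ closes the estimate. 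This replacement step, together with the passage of the martingale property to the limit, is where the $p>1$ integrability does the real work; it is the same mechanism, in an abstract guise, as the vanishing-jump and uniform-integrability arguments used in Section \ref{CharactLamdaequal}.
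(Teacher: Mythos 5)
Your attempt cannot actually be compared against anything in this paper: Theorem \ref{AverageThm} is Theorem 2.1 of \cite{kurtz1992averaging}, and the authors state explicitly that they ``include this theorem without proof,'' deferring entirely to Kurtz's original article. So the only meaningful benchmark is Kurtz's published argument. Measured against that, your overall architecture is the right one and matches his: relative compactness of the occupation measures $\{\Gamma_K\}$ reduced to tightness of the $\s_2$-valued variables $Y_K(t)$ (this is Kurtz's Lemma 1.3/Corollary 1.2); relative compactness of $\{X_K\}$ from compact containment plus relative compactness of the real-valued processes $\{f(X_K)\}$ for $f$ in a set dense in $\C_b(\s_1)$ for the uc-on-compacts topology (Ethier--Kurtz, Chapter 3, Section 9); and identification of limit points by rewriting the drift as $\int_{[0,t]\times\s_2} Af(X_K(s),y)\,\Gamma_K(ds\times dy)$ and passing the perturbed martingale relation to the limit. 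Your product-approximation and a.e.-continuity-point device for the convergence of that integral, and the uniform-integrability argument for transferring the martingale property, are legitimate implementations of the identification step.

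The genuine gap is in the tightness of $\{f(X_K)\}$, where you invoke ``Aldous--Rebolledo.'' The finite-variation part is indeed fine: H\"older plus the uniform $L^p$ bound with $p>1$ controls its modulus over stopping times. But Rebolledo's criterion disposes of the martingale part through its (predictable or optional) quadratic variation, and the hypotheses of the theorem give you no handle whatsoever on $\langle M_K^f\rangle$: when $p\in(1,2)$ the martingale $M_K^f(t)=f(X_K(t))-\int_0^t Af(X_K(s),Y_K(s))\,ds+\epsilon_K^f(t)$ need not even be square-integrable (since $\int_0^t|Af|\,ds$ is only in $L^p$ and $\epsilon_K^f$ is only controlled in $L^1$), and its jumps are exactly those of $f(X_K)+\epsilon_K^f$ --- the very process whose tightness you are trying to establish, so any attempt to bound the bracket from the decomposition is circular. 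This is precisely why the hypotheses are calibrated the way they are: Ethier--Kurtz Theorems 3.9.1 and 3.9.4 take as input exactly a decomposition ``martingale plus finite-variation part with uniformly $L^p$-bounded variation, $p>1$,'' and deliver relative compactness of $\{f(X_K)\}$ without requiring bracket control; Kurtz's proof simply cites them. Your strategy therefore survives, but the named tool at this step does not apply as stated, and repairing it is not cosmetic --- it is the one place where the specific strength of the hypotheses (in particular $p>1$ rather than $p=1$) is actually consumed.
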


The way we will apply this theorem in the proof Theorem \ref{IDElambdaless} is mainly based on the following example from the same source:

\subsubsection*{Example 2.3 of \cite{kurtz1992averaging}:}
Suppose the existence of an operator $B: \D(B) \subset \C_b(\s_2) \to \C_b(\s)$ such that there exists $\mu_K \in \R$, such that for all $g \in \D(B)$, the quantity
\begin{equation*}
    g(Y_K(t)) - \int_0^t \mu_K Bg(X_K(s),Y_K(s)) ds + \delta_K^g(t)
\end{equation*}
is an $\f_t^K$-martingale, $\mu_K \to \infty$, and for each $T>0$
\begin{equation*}
    \lim_{K \to \infty} \E \left[ \sup_{ t \in [0,T]} \frac{1}{\mu_K} \mid\delta_K^g(t) \mid  \right] = 0
\end{equation*}
Then, under the assumptions of Theorem \ref{AverageThm}, it follows that
\begin{equation}\label{FastMart}
    \int_{[0,t] \times \s_2} Bg(X(s),y) \Gamma(ds \times dy)
\end{equation}
is a martingale. But \eqref{FastMart} is continuous and of bounded variation and therefore constant. As a consequence, for each $g \in \D(B)$, with probability one
\begin{equation}\label{FastMart2}
    \int_{[0,t] \times \s_2} Bg(X(s),y) \Gamma(ds \times dy) = 0
\end{equation}
for all $t>0$. Then by Lemma 1.4 of \cite{kurtz1992averaging} there exists a $\P(\s_2)$-valued process $\lbrace \pi_t \rbrace$ such that:
\begin{equation*}
    \int_{[0,t] \times \s_2} h(s,y) \Gamma(ds \times dy) = \int_0^t \left( \int_{\s_2} h(s,y) \pi_s(dy) \right) \Gamma(ds \times \s_2) 
\end{equation*}
with probability one, and for all Borel-measurable $h$ in $[0,\infty) \times \s_2$.\\

At this point, T. Kurtz assumes the existence of a countable subset $\hat{D} \subset \D(B)$ such that:
\begin{equation}\label{closurecondition}
   \closure[0]{ \lbrace (g,Bg): g \in \hat{D} \rbrace} = \closure[0]{\lbrace (g,Bg): g \in \D(B) \rbrace}
\end{equation}
where both closures are taken in $\C_b(\s_2) \times \C_b(\s_1 \times \s_2)$ with respect to the topology of uniform convergence.\\

This gives sufficient conditions such that $\Gamma \in l_m(\s_2)$, and as a consequence \eqref{FastMart2} can be written as:
\begin{equation*}
    \int_0^t \left( \int_{\s_2} Bg(X(s),y) \pi_s(dy) \right) \, ds = 0
\end{equation*}
for all $t$ a.s., and hence
\begin{equation}\label{FastMart4}
   \int_{\s_2} Bg(X(s),y) \pi_s(dy)  = 0
\end{equation}
almost everywhere Lebesgue almost surely.\\

Under additional assumptions, \eqref{FastMart4} implies that the measure $\pi_s$ is stationary for the process with  generator $B_{X(s)}$ given by:
\begin{equation*}
    B_{X(s)}g(y) = Bg(X(s),y)
\end{equation*}
for all $g \in \D(B_{X(s)})$.

\subsubsection{Characterization of limit points for Theorem \ref{IDElambdaless}}\label{prfIDElambdaless}

Let us define $X_K(t)$ and $Y_K(t)$ by:
\begin{align*} 
     X_K(t) &:=  \nu_v^{(K)}(t) \in \M_F^v\\ 
     Y_K(t) &:=  ( \nuu^{(K)} (t), \nuc^{(K)}(t) ) \in \M_F(\D) \times \M_F(\Vc) =: \M_F^{\text{vec}}
\end{align*}
where we think of $Y_K(t)$ as a measure on $\D \times \Vc$.\\

In order to be able to apply Theorem \ref{AverageThm} we need to proceed as follows:
\begin{enumerate}
    \item Verify that $\lbrace \nu_v^{(K)}(t) : t \geq 0\rbrace$ satisfies the compact containment condition.
    \item Verify that $\lbrace (\nuu^{(K)} (t), \nuc^{(K)}(t)): t \geq 0\rbrace$ is relatively compact.
    \item Identify the operator $A$, and related processes of Kurt's example.
\end{enumerate}

\subsubsection*{Compact containment}
Let $\eps >0$ and $T>0$ be fixed. Consider the set $ \C(T,\eps) \subset \M_F^v$ given by:
\begin{equation*}
    \C(T,\eps) = \lbrace \mu \in \M_p^v : \mu(\Vp) < \frac{C_T}{\eps}  \rbrace 
\end{equation*}
where $C_T$ is equal to the RHS of \eqref{GronwApp} for $p=1$.\\

\begin{remark}
Notice that, as a consequence of  $\M_F(\Vp)$ being Polish, the set $\C(T,\eps)$ is sequentially compact and hence compact.
\end{remark}
By Markov's inequality we have:
\begin{align*}
   \hspace{-4cm} \P \left[ \nu_v^K(t) \in \C(T,\eps) : t \in [0,T]  \right] &=  1-  \P \left[ \exists t \in [0,T] : \nu_v^K(t) \notin \C(T,\eps)   \right] \nn \\
    &\geq   1-  \P \left[ \sup_{t \in [0,T]} \langle \nu_v^K(t), 1\rangle  \geq  \frac{C_T}{\eps}   \right] \nn \\
    &\geq 1 - \frac{\eps}{C_T}\E \left[ \sup_{t \in [0,T]} \langle \nu_v^K(t) ,1 \rangle \right] \geq  1- \eps 
\end{align*}
which shows \eqref{compccont}.

\subsubsection*{Relative compactness for the fast system}
We want to to show that  for every $t \geq 0$   the sequence $\lbrace ( \nuu^{(K)} (t), \nuc^{(K)}(t) ) :  K \in \N \rbrace$ is relatively compact as a sequence of $\M_F^{\text{vec}}$-valued random variables. By Corollary 1.2 in \cite{kurtz1992averaging} it is enough to show that 
\begin{equation}\label{RelComp1}
    \sup_{K \in \N} \E \left[ \langle (\nuu^{(K)}(t), \nuc^{(K)}(t) ), 1 \rangle  \right] < \infty  
\end{equation}
and that for each $\eps >0$, there exists a compact $\C \subseteq \bar{D} \times \bar{D} \times \X$ such that:
\begin{equation*}
    \limsup_{K \to \infty} \E \left[ \langle (\nuu^{(K)},1  (t), \nuc^{(K)}(t) ), 1_{\C^c} \rangle > \eps \right] \leq \eps  
\end{equation*}
where $\C^c$ denotes the complement of $\C$.\\

Proposition \ref{Propcontrolp} gives \eqref{RelComp1} , and we conclude by choosing $\C = \bar{D} \times \bar{D} \times \X $.

\subsubsection*{Identification of the operators $A$ and $B$}
Before identifying the operators $A$ and $B$, we introduce the set of cylindrical functions on $M_F^\text{vec}$. These are functions that correspond to triplets:
\begin{equation}\label{CylOperaB}
\phivec_{uc} =\lbrace  \phiv=0, \phiu=\phiu, \phic=\phic \rbrace.
\end{equation}
i.e., functions of the form:
\begin{equation*}
    F_{\text{vec}}((\nuu,\nuc)):= F_{\phivec_{uc}}((\nuv,\nuu,\nuc)), 
\end{equation*}
where $F \in \C^2(\R^3)$ only depends in the last two coordinates, i.e., $F(x,y,z)=G(y,z)$ for some $G \in \C^2(\R^2)$, and the admissible triplet $\phivec_{uc}$ is given as in \eqref{CylOperaB} with $\phiu$ and $\phic$ given as in Definition \ref{correncod} .\\

\begin{proposition}\label{lemmakmartReg2}
Under Assumption \ref{Assbetakreg2} we have the following  martingale description of the virus population:
\begin{enumerate}
\item We have that:
\begin{align*}
M_K^{F_v}(t) &= F_v(\nu_v^{(K)}(t) )- K(1-\mu) \int_0^t \int_{\Vp} b(x,z) \left( F_v(\nu_v^{(K)}(s) + \frac{1}{K}\delta_{(x,z)}) - F_v(\nu_v^{(K)}(s) ) \right) \nu_v^{(K)}(s)(dx,dz) \,  ds \nonumber \\
&\hspace{-1cm}- K\mu \int_0^t \int_{\Vp \times \X} b(x,z)  m(z,e) \left( F_v(\nu_v^{(K)}(s) + \frac{1}{K}\delta_{(x,e)}) - F_v(\nu_v^{(K)}(s) ) \right) \, de  \, \nu_v^{(K)}(s)(dx,dz) \,  ds \nonumber \\
&\hspace{-1cm}+  K \int_0^t  \int_{\Vp} \left( d(z) + c \, \langle (\nu_v^{(K)})^{x}, 1\rangle \right) \left( F_v(\nu_v^{(K)}(s) - \frac{1}{K}\delta_{(x,z)}) - F_v(\nu_v^{(K)}(s) ) \right) \nu_v^{(K)}(s)(dx,dz) ds \nonumber \\
&\hspace{-1cm}+ \int_0^t  K\int_{\Vp \times \D} \beta(s,y,x,\langle (\nu_v^{(K)})^{x}, 1\rangle,z) \left( F_v(\nu_v^{(K)}(s) - \frac{1}{K}\delta_{(x,z)}) - F_v(\nu_v^{(K)}(s) ) \right) \nu_v^{(K)}(s)(dx,dz) \,  \nuu^{(K)}(s)(dy)  ds \nonumber \\
&\hspace{0cm}-K \int_0^t   \int_{\Vc \times E} \eta(s,y,x,z) \left( F_v(\nu_v^{(K)}(s) + \frac{1}{K}\delta_{(x,z)}) - F_v(\nu_v^{(K)}(s) ) \right) \,  \nuc^{(K)}(s)(dy,dz) \, dx \,   ds
\end{align*}
is a càdlàg martingale with predictable quadratic variation
\begin{align*}
\langle M_K^{F_v}\rangle_t &=  K(1-\mu) \int_0^t \int_{\Vp} b(x,z) \left( F_v(\nu_v^{(K)}(s) + \frac{1}{K}\delta_{(x,z)}) - F_v(\nu_v^{(K)}(s) ) \right)^2 \nu_v(s)(dx,dz) \,  ds \nonumber \\
&\hspace{-1cm}+ K\mu \int_0^t \int_{\Vp \times \X} b(x,z)  m(z,e) \left( F_v(\nu_v^{(K)}(s) + \frac{1}{K}\delta_{(x,e)}) - F_v(\nu_v^{(K)}(s) ) \right)^2 \, de  \, \nu_v^{(K)}(s)(dx,dz) \,  ds \nonumber \\
&\hspace{-1cm}+  K \int_0^t  \int_{\Vp} \left( d(z) + c \, \langle (\nu_v^{(K)})^{x}, 1\rangle \right) \left( F_v(\nu_v^{(K)}(s) - \frac{1}{K}\delta_{(x,z)}) - F_v(\nu_v^{(K)}(s) ) \right)^2 \nu_v^{(K)}(s)(dx,dz) ds \nonumber \\
&\hspace{-1cm}+ \int_0^t  K\int_{\Vp \times \D} \beta(s,y,x,\langle (\nu_v^{(K)})^{x}, 1\rangle,z) \left( F_v(\nu_v^{(K)}(s) - \frac{1}{K}\delta_{(x,z)}) - F_v(\nu_v^{(K)}(s) ) \right)^2 \nu_v^{(K)}(s)(dx,dz) \,  \nuu^{(K)}(s)(dy)  ds \nonumber \\
&\hspace{0cm}+K \int_0^t   \int_{\Vc \times E} \eta(s,y,x,z) \left( F_v(\nu_v^{(K)}(s) + \frac{1}{K}\delta_{(x,z)}) - F_v(\nu_v^{(K)}(s) ) \right)^2 \,  \nuc^{(K)}(s)(dy,dz) \, dx \,   ds, 
\end{align*}
 for any cylindrical function $F_v : \M p(E \times \X) \to \R$ of the form:
\begin{equation*}
    F_v(\nu) = F(\langle \nu, \phiv \rangle),
\end{equation*}
with $F \in \C^2(\R)$ and $\phiv$ measurable and bounded.
\end{enumerate}
\end{proposition}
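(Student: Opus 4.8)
The plan is to identify $M_K^{F_v}$ as the Dynkin martingale associated with the cylindrical function $F_v$ under the rescaled generator $\L^{(K)} = \L_1^{(K)} + \L_2^{(K)}$, and then to read off its predictable quadratic variation from the carré du champ. The crucial structural observation is that $F_v$ depends only on the virus component $\nu_v$, and that viruses themselves do not diffuse. Consequently, when $\L^{(K)}$ is applied to $F_v$, the diffusive part $\L_2^{(K)}$ contributes nothing: the triplet associated with $F_v$ is $\phivec = \lbrace \phiv, 0, 0 \rbrace$, and $\L_2^{(K)}$ acts only through the vector test functions $\phiu$ and $\phic$, which vanish here. Among the six jump contributions making up $\L_1^{(K)}$, the loss term $\L^{los}$ (death of a virus while on a vector) leaves $\nu_v$ unchanged and hence also drops out. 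This is exactly why no $\gamma$-term appears in the statement.

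First I would invoke the martingale machinery already established in Proposition \ref{MartingalesProp}, applied to the rescaled process with the extra powers of $K$ prescribed by Assumption \ref{Assbetakreg2}: for $F_v$ in the domain, the process
\[
F_v(\nu_v^{(K)}(t)) - F_v(\nu_v^{(K)}(0)) - \int_0^t \L^{(K)} F_v(\nu_v^{(K)}(s)) \, ds
\]
is a càdlàg martingale, and a constant shift by $F_v(\nu_v^{(K)}(0))$ is immaterial to the martingale property. Writing out $\L^{(K)} F_v = \L_1^{(K)} F_v$ term by term reproduces the five integral terms in the statement, with the $K$-prefactors coming from the scaled rates. I would emphasise here that, because $\phiv$ enters solely through the pure-jump operator $\L_1^{(K)}$ --- a finite-intensity integral of bounded increments $F_v(\nu \pm \tfrac1K \delta) - F_v(\nu)$ --- no regularity of $\phiv$ beyond boundedness and measurability is needed, which is precisely the generality claimed; the differentiability in $F$ is used only to control these increments. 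For the quadratic variation I would then use the identity $\langle M_K^{F_v}\rangle_t = \int_0^t \big(\L^{(K)}(F_v^2) - 2 F_v \, \L^{(K)} F_v\big)(\nu_v^{(K)}(s)) \, ds$. Since the generator acting on $F_v$ is purely of jump type, this carré du champ collapses to the sum over jump types of (rate) $\times$ (increment)$^2$; the increments are exactly the squared differences $(F_v(\nu \pm \tfrac1K\delta) - F_v(\nu))^2$ displayed in the statement, and the loss term again contributes zero because its increment vanishes.

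The only genuine work is the integrability and domain bookkeeping, rather than any new idea. Since $F \in \C^2(\R)$ and $\phiv$ is bounded, a first-order (mean value) estimate gives $\lvert F_v(\nu \pm \tfrac1K\delta) - F_v(\nu)\rvert \le \tfrac{C}{K}\norm{\phiv}_\infty$ locally in $\langle \nu, 1\rangle$, so each integrand is dominated by a polynomial in the total masses $\langle \nu_\alpha^{(K)}(s), 1\rangle$. By the uniform moment bound \eqref{unifoverKontrol} (equivalently Proposition \ref{Propcontrolp}), the expectations of these polynomials are finite, uniformly in $K$; a standard localisation at the stopping times $\tau_n^{(K)}$ where the total mass first exceeds $n$, followed by $n \to \infty$, then upgrades the local martingale to a true martingale and justifies the quadratic-variation formula. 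I expect this integrability step, together with the verification that $\L^{(K)}$ genuinely acts on the nonsmooth $\phiv$, to be the main (though routine) obstacle; the algebraic computation of both $\L^{(K)}F_v$ and the carré du champ is a direct transcription of $\L_1^{(K)}$.
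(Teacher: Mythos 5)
Your proposal is correct and follows essentially the same route as the paper: the paper's own proof simply invokes Dynkin's theorem together with the explicit form of the rescaled generator $\L^{(K)}$ (in the spirit of Proposition \ref{DynkinMartTheo}, with the quadratic variation obtained as in Proposition \ref{MartingalesProp} by applying the generator to squares and using the Doob--Meyer decomposition, i.e.\ your carr\'e du champ identity). Your additional observations --- that $\L_2^{(K)}$ and the $\L^{los}$ term drop out because $F_v$ depends only on $\nu_v$, and the localisation/moment-bound argument for integrability --- are exactly the ``specific knowledge of the generator'' and bookkeeping the paper leaves implicit.
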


\begin{proof}
The proof of this proposition is in the same spirit than the proof of  Proposition \ref{DynkinMartTheo}. It is an application of Dynkin's theorem and the specific knowledge of the generator \eqref{DefGen}. 
\end{proof}
We can use Taylor's theorem to expand $F( \langle \nu_v^{(K)}(s) \pm \frac{1}{K}\delta_{(x,z)}, \phiv \rangle)$ around $\langle \nu_v, \phi_v \rangle$ to obtain:
\begin{align}\label{Taylorexpan}
&\left( F_v(\nu_v^{(K)}(s) \pm \frac{1}{K}\delta_{(x,z)}) - F_v(\nu_v^{(K)}(s) ) \right) \nn \\
&\hspace{3cm}= \pm \frac{1}{K} \phi_v(x,z) F\myprime(\langle \nu_v^{(K)}(s), \phi_v\rangle) + \frac{1}{2K^2} \phi_v^2(x,z) F\mydprime(\langle \nu_v^{(K)}(s), \phi_v\rangle) + o(1/K^2),   
\end{align}
where $o(h^q)$ represents a function $G(h)$ satisfying 
\begin{equation*}
  \lim_{h \to 0 } \frac{G(h)}{h^q} = 0 
\end{equation*}
i.e., a function satisfying Peano's form of the remainder of Taylor's theorem.\\

From \eqref{Taylorexpan}, we can see that the martingale for the virus population becomes:
\begin{align*} 
 &\hspace{-2cm}F_v( \nu_v^{(K)}(t))  - \int_0^t A F_v(\nu_v^K(s),(\nuu^K(s),\nuc^K(s))) \,  ds + \epsilon_K^{F_v}(t)
\end{align*}
where
\begin{align*}
A &F_v(\nu_v,(\nuu,\nuc)) \nonumber \\
&= F\myprime(\langle \nu_v, \phi_v\rangle)  \int_{\Vp} b(x,z) \phi_v(x,z)  \nu_v(dx,dz) + \mu F\myprime(\langle \nu_v, \phi_v\rangle) \int_{\Vp} \phi_v(x,z) \left[ \phi_v(x,z) -\int_{ \X} m(z,e) \phi_v(x,e) \, de \right] \nu_v(dx,dz) \nonumber \\
&+ F\myprime(\langle \nu_v, \phi_v\rangle)   \int_{\Vp} \left( d(z) + c \, \langle \nu_v^{x}, 1\rangle \right) \phi_v(x,z)  \nu_v(dx,dz) -  F\myprime(\langle \nu_v, \phi_v\rangle) \int_{ E} \left(   \int_{\Vc } \eta(y,x,z) \phi_v(x,z)  \,  \nuc(dy,dz) \right)  dx  \nonumber \\
&+ F\myprime(\langle \nu_v, \phi_v\rangle) \int_{\Vp } \left( \int_{ \D} \beta(y,x,\langle (\nu_v^{(K)})^{x}, 1\rangle,z)\,  \nuu(dy) \right) \phi_v(x,z) \nu_v(dx,dz) 
\end{align*}
with domain:
\begin{equation}\label{DfDomainB}
    \D(A) = \lbrace F_{v} : F \in \C^2(\R),  \phi_v \text{ is measurable and bounded}  \rbrace \subset \C_b (\M_F(\Vp)),
\end{equation}
and where $\epsilon_K^{F_v}(t)$ satisfies \eqref{epsK}.

\begin{remark}
Notice that $\D(A)$ generates the set $\C_b (\M_F(\Vp))$.
\end{remark}
In a similar way we now present the martingale for the vector population. Notice that to apply Theorem \ref{AverageThm} and Example 2.3 from \cite{kurtz1992averaging}, we do not need to show that the the quadratic variation of the fast process vanishes as $K$ goes to infinity. We first introduce the following additional notation:
\begin{equation*}
  \Delta^{K,\lambda,c}_{y,z} F_{\text{vec}}( \nuu,\nuc) :=  F_{\text{vec}}( \nuu-\tfrac{1}{K^\lambda}\delta_{y} ,\nuc+\tfrac{1}{K^\lambda}\delta_{y,z})  -F_{\text{vec}}( \nuu ,\nuc)
\end{equation*}
and
\begin{equation*}
  \Delta^{K,\lambda,u}_{y,z} F_{\text{vec}}( \nuu,\nuc) :=  F_{\text{vec}}( \nuu+\tfrac{1}{K^\lambda}\delta_{y} ,\nuc-\tfrac{1}{K^\lambda}\delta_{y,z})  -F_{\text{vec}}( \nuu ,\nuc).
\end{equation*}
We then have the following proposition:
\begin{proposition}\label{lemmakmartReg20}
Under Assumption \ref{Assbetakreg2}, for the joint process $(\nuu^{(K)},\nuc^{(K)})$, we have:
\begin{align*}
M_K^{F_{\text{vec}}}(t) &= F_{\text{vec}}( \nuu^{(K)}(t) ,\nuc^{(K)}(t))  -K^{1-\lambda} \int_0^t \partial_u F_{\text{vec}}( \nuu^{(K)}(s) ,\nuc^{(K)}(s))\int_{\D} \L^u \phiu(y) \, \nuu^{(K)}(s)(dy) \, ds \nonumber \\
&\hspace{-1.3cm}-K^{1-2 \lambda} \int_0^t \partial_u^2 F_{\text{vec}}( \nuu^{(K)}(s) ,\nuc^{(K)}(s)) \int_{\D} \sigma^u(y)^2  \, |\nabla \phiu(y) |^2 \, \nuu^{(K)}(s)(dy) \, ds \nonumber \\
&\hspace{-1.3cm}-K^{1-\lambda} \int_0^t \partial_c F_{\text{vec}}( \nuu^{(K)}(s) ,\nuc^{(K)}(s)) \int_{\Vc} \L^c \phic(y,z) \, \nuc^{(K)}(s)(dy,dz) \, ds \nonumber \\
&\hspace{-1.3cm}-K^{1-2 \lambda} \int_0^t \partial_c^2 F_{\text{vec}}( \nuu^{(K)}(s) ,\nuc^{(K)}(s)) \int_{\Vc} \sigma^c(y)^2 | \nabla_y \phic(y,z) |^2 \, \nuc^{(K)}(s)(dy,dz) \, ds \nonumber \\
&\hspace{-1.3cm} -K \int_0^t  \int_{\Vp \times \D} \beta(s,y,x,\langle \nu_v^{x}, 1\rangle,z) \left( \Delta^{K,\lambda,c}_{y,z} F_{\text{vec}}( \nuu^{(K)}(s) ,\nuc^{(K)}(s)) \right) \nu_v^{(K)}(s)(dx,dz) \,  \nuu^{(K)}(s)(dy) \, ds \nonumber \\
&\hspace{-1.3cm} - K \int_0^t   \int_{\Vc \times E} \eta(s,y,x,z) \left( \Delta^{K,\lambda,u}_{y,z} F_{\text{vec}}( \nuu^{(K)}(s) ,\nuc^{(K)}(s)) \right) \nuc^{(K)}(s)(dy,dz) \,  dx \,   ds \nonumber \\
&\hspace{-1.3cm}-K \int_0^t \int_{\Vc} \gamma(z) \left( \Delta^{K,\lambda,c}_{y,z} F_{\text{vec}}( \nuu^{(K)}(s) ,\nuc^{(K)}(s)) \right) \, \nuc^{(K)}(s)(dy,dz) \, ds 
\end{align*}
is a càdlàg martingale. 
\end{proposition}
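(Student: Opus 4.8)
The plan is to obtain this identity as a direct instance of Dynkin's formula for the Markov process $\Lambda_K(t) = (\nuv^{(K)}(t), \nuu^{(K)}(t), \nuc^{(K)}(t))$, exactly as in the proof of Proposition \ref{DynkinMartTheo}. By its very construction \eqref{CylOperaB}, $F_{\text{vec}}$ is the cylindrical function $F_{\phivec_{uc}} \in \f_C$ associated with the admissible triplet $\phivec_{uc} = \lbrace 0, \phiu, \phic \rbrace$, where $\phiu \in \C_b^2(\D) \cap D(\L^u)$ and $\phic \in \C_b^2(\I) \cap D(\L^c)$; in particular it lies in the domain of the rescaled generator $\L^{(K)}$. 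The general martingale result (the analogue for $\L^{(K)}$ of Proposition \ref{MartingalesProp}) then guarantees that
\begin{equation*}
F_{\text{vec}}(\nuu^{(K)}(t), \nuc^{(K)}(t)) - F_{\text{vec}}(\nuu^{(K)}(0), \nuc^{(K)}(0)) - \int_0^t \L^{(K)} F_{\text{vec}}(\Lambda_K(s)) \, ds
\end{equation*}
is a càdlàg martingale. It then remains to compute $\L^{(K)} F_{\text{vec}}$ explicitly and to check that it reproduces the integrand written in the statement; the constant $F_{\text{vec}}(\nuu^{(K)}(0), \nuc^{(K)}(0))$ is $\mathcal{F}_0$-measurable, so its presence or absence does not affect the martingale property.

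The computation $\L^{(K)} F_{\text{vec}} = \L_1^{(K)} F_{\text{vec}} + \L_2^{(K)} F_{\text{vec}}$ is where the choice $\phiv = 0$ enters decisively. In the jump part $\L_1^{(K)}$, the three demographic events (reproduction, mutation, and death of viruses on plants) modify only the $\nuv$-coordinate and leave $(\nuu, \nuc)$ unchanged; since $F_{\text{vec}}$ does not depend on $\nuv$, each corresponding increment vanishes and these three terms drop out. The surviving jump terms are the loading event (a free vector acquiring a virus, giving the $\beta$-integral with increment $\Delta^{K,\lambda,c}_{y,z}$), the unloading event (a charged vector reverting to free, giving the $\eta$-integral with increment $\Delta^{K,\lambda,u}_{y,z}$), and the loss event (death of the carried virus, again reverting a charged vector to free and producing the $\gamma$-integral), each carrying the rate factor $K$ from $\L_1^{(K)}$. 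For the diffusive part, applying the explicit form of $\L_2^{(K)}$ to $F_{\text{vec}} = F(\langle \phiu, \nuu\rangle, \langle \phic, \nuc\rangle)$ yields the first-order drift terms scaled by $K^{1-\lambda}$ (with $\partial_u F_{\text{vec}}$, $\partial_c F_{\text{vec}}$ and the integrals of $\L^u\phiu$, $\L^c\phic$) together with the second-order (quadratic variation) terms scaled by $K^{1-2\lambda}$ (with $\partial_u^2 F_{\text{vec}}$, $\partial_c^2 F_{\text{vec}}$ and the integrals of $(\sigma^u)^2|\nabla\phiu|^2$, $(\sigma^c)^2|\nabla_y\phic|^2$). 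Here the Neumann boundary conditions encoded in $\phiu \in D(\L^u)$, $\phic \in D(\L^c)$ are exactly what makes the boundary local-time contributions of the reflected diffusions vanish in Itô's formula, so that $\L_2^{(K)}$ takes this clean form. Collecting these contributions reproduces precisely the compensator in the definition of $M_K^{F_{\text{vec}}}$.

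The one point requiring genuine care, rather than bookkeeping, is promoting the compensated process from a local to a true martingale, i.e. checking that $\E[\int_0^t |\L^{(K)} F_{\text{vec}}(\Lambda_K(s))| \, ds] < \infty$. Each summand of $\L^{(K)} F_{\text{vec}}$ is a product of a bounded factor (the rates $b, d, \beta, \eta, \gamma$ and the derivatives of $F$ and of $\phiu, \phic$, all bounded under Assumption \ref{Assbetakreg2}) with an integral of the measures that is polynomial of degree at most two in the total masses $\langle \nu_\alpha^{(K)}(s), 1\rangle$. Hence the moment control of Proposition \ref{Propcontrolp}, and in particular the estimate \eqref{unifoverKontrol} bounding $\E[\sup_{s \le T}\langle \nu_\alpha^{(K)}(s), 1\rangle^3]$ for each $\alpha$, furnishes the required integrability and closes the argument exactly as in Proposition \ref{DynkinMartTheo}. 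I expect this integrability verification, together with the correct tracking of the scaling powers $K$, $K^{1-\lambda}$, $K^{1-2\lambda}$ across the several jump and diffusive terms, to be the only nontrivial part of the proof.
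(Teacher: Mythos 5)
Your proposal is correct and matches the paper's own (very terse) argument: the paper proves this proposition, like Proposition \ref{lemmakmartReg2}, by citing Dynkin's theorem applied to the cylindrical function $F_{\text{vec}}$ together with the explicit form of the rescaled generator $\L^{(K)} = \L_1^{(K)} + \L_2^{(K)}$, with integrability supplied by the moment bounds of Proposition \ref{Propcontrolp}. Your write-up simply makes explicit the steps the paper leaves implicit — the vanishing of the virus-demography jump terms because $\phiv = 0$, the bookkeeping of the powers $K$, $K^{1-\lambda}$, $K^{1-2\lambda}$, and the reduction of the local-to-true-martingale step to the estimate \eqref{unifoverKontrol} — so it is the same proof, carried out in more detail.
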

Using an analogous Taylor's expansion to that in  \eqref{Taylorexpan}, we reveal the following form for the martingale of the fast system: 
\begin{align*}
\hspace{-1cm}&F_{\text{vec}}( \nuu^{(K)}(t) ,\nuc^{(K)}(t))  - \int_0^t K^{1-\lambda}  B F_{\text{vec}}( \nuv^{(K)}(s),(\nuu^{(K)}(s) ,\nuc^{(K)}(s))) \,  ds +\delta_K^{F_{\text{vec}}}(t)
\end{align*}
where the operator $ B F_{\text{vec}}(\nu_v,(\nuu,\nuc))$ is given as follows: 
\begin{align}\label{DefBgen}
&B F_{\text{vec}}(\nu_v,(\nuu,\nuc))= \partial_u F_{\text{vec}}(\nuu,\nuc) \int_{\D} \L^u \phiu(y) \, \nuu(dy)  + \partial_c F_{\text{vec}}(\nuu,\nuc) \int_{\Vc} \L^c \phic(y,z) \, \nuc(dy,dz)  \nonumber \\
&-  \partial_u F_{\text{vec}}(\nuu,\nuc) \int_{\Vp \times \D} \beta(s,y,x,\langle \nu_v^{x}, 1\rangle,z) \phiu(y) \nuv(dx,dz) \,  \nuu(dy) \nn \\
&+ \partial_c F_{\text{vec}}(\nuu,\nuc) \int_{\Vp \times \D} \beta(s,y,x,\langle \nu_v^{x}, 1\rangle,z) \phic(y,z)  \nuv(dx,dz) \,  \nuu(dy \nonumber \\
&+ \partial_u F_{\text{vec}}(\nuu,\nuc) \int_{\Vc \times E} \eta(s,y,x,z) \phiu(y) \nuc(dy,dz) \,  dx -\partial_c F_{\text{vec}}(\nuu,\nuc) \int_{\Vc \times E} \eta(s,y,x,z) \phic(y,z)  \nuc(dy,dz) \,  dx  \nonumber \\
&+ \partial_u F_{\text{vec}}(\nuu,\nuc)\int_{\Vc} \gamma(z) \phiu(y)  \nuc(dy,dz) -\partial_c F_{\text{vec}}(\nuu,\nuc)\int_{\Vc} \gamma(z) \phic(y,z) \nuc(dy,dz) 
\end{align}
with domain:
\begin{equation*}
    \D(B) = \lbrace F_{\text{vec}} : F \in \C^2(\R),  \phiu \in \C^2_0(\D), \phic \in \C^{2,0}_0(\Vc)  \rbrace \subset \C_b(\M_F^\text{vec})
\end{equation*}
and 
\begin{align*}
\delta_K^{F_{\text{vec}}}(t)  &= -K^{1-2\lambda} \int_0^t \partial_u^2 F_{\text{vec}}( \nuu^{(K)}(s) ,\nuc^{(K)}(s)) \int_{\D} \sigma^u(y)^2  \, |\nabla \phiu(y) |^2 \, \nuu^{(K)}(s)(dy) \, ds \nonumber \\
&-K^{1-2 \lambda} \int_0^t \partial_c^2  F_{\text{vec}}( \nuu^{(K)}(s) ,\nuc^{(K)}(s)) \int_{\Vc} \sigma^c(y)^2 | \nabla_y \phic(y,z) |^2 \, \nuc^{(K)}(s)(dy,dz) \, ds + o(1/K^2)
\end{align*}
where with $o(1/K^2)$ we meant as in \eqref{Taylorexpan}.\\

Notice that, since $\lambda >0$, indeed we have:
\begin{equation*}
    \lim_{K \to \infty} K^{\lambda-1} \E \left[ \sup_{ t \in [0,T]}  \mid \delta_K^{F_{\text{vec}}}(t)  \mid  \right] = 0
\end{equation*}
for any $T>0$.\\

As a consequence we have the following martingale:
\begin{equation}\label{Bmartingale}
    \int_0^t \int_{\M_F} B F_{\text{vec}}(\xi_v(s), (\xiu,\xic)) \Gamma(ds \times d (\xiu,\xic) )
\end{equation}
where $\Gamma$ is as in Theorem \ref{AverageThm}.\\

Following Example 2.3 of \cite{kurtz1992averaging} we need to find a countable subset of $\D(B)$  such that \eqref{closurecondition} is satisfied, and as a consequence \eqref{Bmartingale} can be re-written as:
\begin{equation}\label{expPreCount}
     \int_0^t \left( \int_{\M_F} B F_{\text{vec}}(\xi_v(s), (\xiu,\xic)) \Pi_{\xi_v(s)} (d (\xiu,\xic) ) \right) \, ds = 0 
\end{equation}

\subsubsection*{Countable set}
In order to find the desired countable set that verifies \eqref{closurecondition} we proceed as follows:\\

First, notice that by the compactness of $\bar{\D}$ and $\X$ we know that the space $\M_F^\text{vec}$ is a locally compact separable and metrizable space (this can be found for example in Theorem 1.14 of \cite{li2010measure}). We denote by $ \closure[0]{\M_F^\text{vec}}$ the one point compactification of $\M_F^\text{vec}$. That is:
\begin{equation*}
    \closure[0]{\M_F^\text{vec}} = \M_F^\text{vec} \cup \lbrace \mu_\infty \rbrace
\end{equation*}
where we have extended the weak topology by imposing $\mu_n \to \mu_\infty$ if and only if $\mu_n(\bar{\D} \times \bar{\D} \times \X) \to \infty$.\\

Second, we have that $\C(\closure[0]{\M_F^\text{vec}})$ is Polish, and hence separable, with the uniform norm (see for example \cite{kechris2012classical}). As a consequence $\C_{b,0} (\M_F^\text{vec})$ ( bounded continuous  functions vanishing at $\mu_\infty$) is separable as well (because the one point compactification of $\M_F^\text{vec}$ is metrizable).\\

Third, we know that $\D(B)$ generates the whole  $\C_b(\M_F^\text{vec})$. As a consequence, its restriction to the set $\hat{\D}(B)$, generating only those functions vanishing at infinity is separable as well under the topology of uniform convergence.\\

Finally, by construction, the countable dense set $\hat{D}$ witnessing the separability of $\hat{\D(B)}$ is the set that verifies \eqref{closurecondition}. 

\subsubsection*{Conclusion from the averaging principle}
From \eqref{expPreCount} we conclude that 
\begin{equation*}
\int_{\M_F} B F_{\text{vec}}(\xi_v(t), (\xiu,\xic)) \Pi_{\xi_v(t)} (d (\xiu,\xic) )   = 0 
\end{equation*}
for all $t$, Lebesgue almost surely. By Example 2.3 from \cite{kurtz1992averaging}, we obtain that
\begin{align}\label{MartingaleA}
 &\hspace{-2cm}F_v( \xi_v(t))  - \int_0^t A F_v(\xi_v(s)) \,  ds 
\end{align}
is a Martingale, where
\begin{align*}
A &F_v(\xi_v) \nonumber \\
&= F\myprime(\langle \xi_v, \phi_v\rangle)  \int_{\Vp} b(x,z) \phiv(x,z)  \nuv(dx,dz) + \mu F\myprime(\langle \xi_v, \phi_v\rangle) \int_{\Vp} b(x,z) \left[ \phi_v(x,z) -\int_{ \X} m(z,e) \phi_v(x,e) \, de  \right] \nu_v(dx,dz) \nonumber \\
&+ F\myprime(\langle \xi_v, \phi_v \rangle)   \int_{\Vp} \left( d(z) + c \, \langle \nu_v^{x}, 1\rangle \right) \phi_v(x,z)  \nu_v(dx,dz) \nonumber \\
&+ F\myprime(\langle \xi_v, \phi_v\rangle) \int_{\Vp } \left(\int_{\M_F^\text{vec}} \int_{ \D} \beta(s,y,x,\langle \nu_v^{x}, 1\rangle,z)\,  \Pi_{\xi_v(s)}^B(\nuu(dy) \times \M_F(\Vc) ) \right) \phi_v(x,z) \nu_v(dx,dz)  \nonumber \\
&-  F\myprime(\langle \xi_v, \phi_v\rangle) \int_{ E} \left( \int_{\M_F^\text{vec}}  \int_{\Vc } \eta(s,y,x,z) \phi_v(x,z)  \,  \Pi_{\xi_v(s)}^B (\M_F(\D) \times \nuc(dy,dz)) \right)  dx  
\end{align*}
where $\Pi_{\xi_v(s)}^B$ is stationary for the generator \eqref{DefBgen}.

\begin{remark}
Notice that the presence of the function $F \in \C^2(\R)$ only through it first derivative implies that the process $\lbrace \xi_v \rbrace$ is deterministic, so that the martingale  in \eqref{MartingaleA} is equal to zero. As a consequence of this observation we can use the special case $F(x)=x$, which gives an expression that indeed corresponds to the RHS of \eqref{IDEvirusequal}.
\end{remark}

To conclude the characterization of the limit points for the case $\lambda < 1$, notice that the same arguments given for the case $\lambda=1$ in Section \ref{CharactLamdaequal} can be easily adapted to this case.

\section*{Acknowledgements}
M. Ayala is supported by the project ARCHIV of the Agence Nationale
de la Recherche (ANR-18-CE32-0004).

\printbibliography

\appendix
\section{Construction of the process}\label{SecWell}

Here we provide the technical details that guarantee the well-definedness of our process.  We first present the following proposition, which necessary to control the growth of the process and avoid explosions: 

\begin{proposition}\label{ControlJumpRate}
Under Assumption \ref{AssPherates} we have that there exists a positive constant $C$ such that for every measure $\nu \in \M_p := \M_p(E \times \X) \times \M_p(\D) \times \M_p(\D \times \X)$  the global jump rate, i.e. the rate at which a jump event takes place, is bounded by:
\begin{equation*}\label{CbndJump}
    C(T) \langle 1, \nu \rangle \left( 1 + \langle 1, \nu \rangle \right)
\end{equation*}
\end{proposition}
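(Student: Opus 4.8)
The plan is to read the global jump rate directly off the jump part of the generator, $\L_1 = \L^{dem} + \L^{load} + \L^{unload} + \L^{los}$, and to bound each of the six elementary rates using Assumption \ref{AssPherates}. Writing $N_v = \langle 1,\nuv\rangle$, $N_u = \langle 1,\nuu\rangle$ and $N_c = \langle 1,\nuc\rangle$, so that $\langle 1,\nu\rangle = N_v + N_u + N_c$, the global jump rate $R(\nu)$ is the sum of the two reproduction rates, the death rate, the loading rate, the unloading rate and the loss rate appearing in $\L_1$. First I would treat the purely viral demographic terms: using $\sup b = \bar{b}$ together with the fact that $m(z,\cdot)$ integrates to one against $\bar{m}$, the two reproduction contributions sum to at most $\bar{b}\,N_v$, while the natural-death part of the death term is at most $\bar{d}\,N_v$.

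The one genuinely quadratic demographic contribution is the competition term $\int_{\Vp} c\langle \nuv^x,1\rangle\,\nuv(dx,dz)$; here I would simply bound $\langle \nuv^x,1\rangle = N_x \le N_v$, which yields $c\,N_v^2$. The loading rate $\int_{\Vp}\int_{\D} \beta(t,y,x,N_x(t),z)\,\nuv(dx,dz)\,\nuu(dy)$ is the step requiring the most care, and I expect it to be the main obstacle: it is a double integral over virus and vector particles, so bounding it by $\bar{\beta}\,N_v N_u$ needs a \emph{pointwise} bound on $\beta$, whereas Assumption \ref{AssPherates} only supplies the integral bound $\sup_{r}\sup_{(y,z)}\int_E \beta(t,y,x,r,z)\,dx = \bar{\beta}$. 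The resolution is that $E$ is a finite set and the measure $dx$ on it is the counting measure $\zeta_E$, so each summand is dominated by the full sum, giving $\beta(t,y,x,r,z) \le \bar{\beta}$ for every $x \in E$; the same finiteness makes the unloading rate $\int_{\Vc}\big(\int_E \eta\,dx\big)\,\nuc(dw,de) \le \bar{\eta}\,N_c$ immediate, and the loss rate is at most $\bar{\gamma}\,N_c$.

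Collecting the six bounds gives $R(\nu) \le (\bar{b} + \bar{d})N_v + c\,N_v^2 + \bar{\beta}\,N_v N_u + (\bar{\eta} + \bar{\gamma})N_c$. Since each of $N_v, N_u, N_c$ is bounded by $\langle 1,\nu\rangle$, the linear terms are at most $(\bar{b} + \bar{d} + \bar{\eta} + \bar{\gamma})\langle 1,\nu\rangle$ and the quadratic terms at most $(c + \bar{\beta})\langle 1,\nu\rangle^2$, so that
\begin{equation*}
R(\nu) \le C\,\langle 1,\nu\rangle\big(1 + \langle 1,\nu\rangle\big), \qquad C := \max\big(\bar{b} + \bar{d} + \bar{\eta} + \bar{\gamma},\; c + \bar{\beta}\big),
\end{equation*}
which is the claim. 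I would finally remark that, because the bounds $\bar{b},\bar{d},\bar{\gamma},\bar{\beta},\bar{\eta}$ of Assumption \ref{AssPherates} hold uniformly in $t \ge 0$, the constant $C$ can in fact be chosen independent of $T$.
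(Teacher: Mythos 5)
Your proof is correct and follows essentially the same route as the paper's: bound each of the elementary rates in $\L_1$ by the constants $\bar{b},\bar{d},\bar{\gamma},\bar{\beta},\bar{\eta},c$ of Assumption \ref{AssPherates}, with the competition and loading terms supplying the quadratic factor, and take $C$ to be a maximum built from these constants. Your additional care on the loading term (deducing the pointwise bound $\beta\le\bar{\beta}$ from the integral bound via the finiteness of $E$ and nonnegativity) and your closing remark that $C$ can be taken independent of $T$ only make explicit what the paper's shorter proof leaves implicit.
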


\begin{proof}
For a measure $\nuvec = (\nu_v, \nuu, \nuc) \in \M_p$, we denote by $R(\nuvec)$ its total jump rate. This rate is bounded from above by:
\begin{align}\label{DefRnu}
 R(\nuvec) &\leq  \int_{ E \times \X} b(x,z) \nu_v(dx,dz) + \int_{ E \times \X} \left[d(z) + c \int_{\X} \nu_v^{(x)}(de)\right] \nu_v(dx,dz) +  \int_{\Vp \times \D} \bar{\beta} \, \nuv(dx,dz) \,  \nuu(dy)  \nonumber \\
& + \int_{\Vc \times E} \bar{\eta} \, \nuc(dy,dz) \, \zeta_E( dx) + \int_{\Vc} \gamma(z)  \, \nuc(dy,dz),
\end{align}
We conclude using Assumption \ref{AssPherates} and  setting:
\[
C = \max \left\{ \bar{b}, \bar{d},\bar{\gamma}, \bar{\beta}, \bar{\eta}, c \right\}.
\]
\end{proof}

\subsection{Path-wise construction of the process }
In this section we rigorously define the Markov processes on path space $\mathcal{D}([0,\infty), \M_F)$,  with  generator $\L$ given by \eqref{DefGen}. Following \cite{meleard_stochastic_2015}, we provide an specific construction in terms of Poisson point measures.\\

Let us introduce some additional notation. Let $ \N^* = \N \setminus \lbrace 0 \rbrace $.
Let $ \lambda $ be the Lebesgue measure on $ \R_+ $, $ \lambda_c $ the counting measure on $ \N^* $ and $ \zeta_E $ the counting measure on $ E $. Moreover, for all $t\geq 0$, we introduce the following notation:
\begin{equation*}
\A(t) = [0,t] \times \N^* \times \R_+
\end{equation*}

\begin{definition}
Let $(\Omega, \f, \P)$ be a sufficiently large probability space. On this probability space we consider the following independent random elements:
\begin{itemize}
	\item $ Q_{inf} $ a Poisson random measure on $ \R_+ \times \N^* \times \N^* \times \R_+ $, with intensity $ \lambda(ds) \otimes \lambda_c(di) \otimes \lambda_c(dj) \otimes \lambda(d\theta) $,
	\item $ Q_{dis} $ a Poisson random measure on $ \R_+ \times E \times \N^* \times\R_+ $, with intensity $ \lambda(ds) \otimes \zeta_E(dx) \otimes \lambda_c(dj) \otimes \lambda(d\theta) $,
	\item $ Q_{los} $, $ Q_{d} $, $ Q_{cb} $ Poisson random measures on $ \R_+ \times \N^* \times \R_+ $, with intensity $ \lambda(ds) \otimes \lambda_c(di) \otimes \lambda(d\theta) $,
	\item $ Q_{bm} $, a Poisson random measure on $ \R_+ \times \N^* \times \mathcal{X} \times \lambda(d\theta) $, with intensity $ \lambda(ds) \otimes \lambda_c(di) \otimes \overline{m}(dz) \otimes \lambda(d\theta) $.
	\item  For $\alpha \in \lbrace u,c \rbrace$, the set $\lbrace W^{i,\alpha}; i \geq 1 \rbrace$ denotes a family of independent Brownian motions in $\R^d$.
\end{itemize}
Moreover, we enlarge the original probability space  $(\Omega, \f, \P)$ to the filtered probability space  $(\Omega, \f, \f_t,  \P)$ , where $( \f_t )_{t \geq 0}$ is the canonical filtration generated by $\lbrace Q_{inf}, Q_{dis}, Q_{los}, Q_{bm}  \rbrace$ and the families $\lbrace W^{i,\alpha}; i \geq 1 \rbrace$  for $\alpha \in \lbrace u,c \rbrace$.
\end{definition}

We then have the following representations:
\begin{itemize}
    \item Individuals (in the virus population) born from clonal births up to time $ t $ are given by
\begin{align*}
	\nuvec_{cb}(t) = \int_{\A(t)} \left(\delta_{x_i(s^-), z_i(s^-)},0,0\right) \, \indic{i \leq N_v(s^-)} \indic{\theta \leq (1-\mu) b(x_i(s^-), z_i(s^-))} Q_{cb}(ds\, di\, d\theta).
\end{align*}
    \item Individuals born with mutations are given by
\begin{align*}
	\nuvec_{bm}(t) = \int_{ \A(t) \times \X } \left(\delta_{x_i(s^-), z},0,0\right) \, \indic{i \leq N_v(s^-)} \indic{\theta \leq \mu b(x_i(s^-), z_i(s^-)) m(z_i(s^-), z)} Q_{bm}(ds\, di\, dz\, d\theta).
\end{align*}
    \item Individuals who died before time $ t \geq 0 $ are given by
\begin{align*}
	\nuvec_{d}(t) = \int_{\A(t)} \left(\delta_{x_i(s^-), z_i(s^-)},0,0 \right)\, \indic{i \leq N_v(s^-)} \indic{\theta \leq d(z_i(s^-)) + c N_{x_i(s^-)}(t)} Q_d(ds\, di\, d\theta).
\end{align*}
    \item Viruses being charged on a vector are represented by:
\begin{align*}
&\nuvec_{inf}(t)=  \int_{ \A(t) \times \N^*} \left(-\delta_{x_i(s^-),z_i(s^-)}, -\delta_{Y_j(s^-)}, \delta_{Y_j(s^-),z_i(s^-)}\right) \, \indic{i \leq N_v(s^-)} \indic{j \leq N_u(s^-)} \nn \\
&\times \indic{\theta \leq \beta(s^-, Y_j(s^-), x_i(s^-), N_{x_i(s^-)},z_i(s^-))} Q_{inf}(ds\, di\, dj\, d\theta).
\end{align*}
    \item Viruses who have been unloaded on a host plant up to time $ t $ are given by
\begin{align*}
	\nuvec_{dis}(t) = \int_{\A(t) \times E } \left(\delta_{x,z_j(s^-)}, \delta_{Y_j(s^-)}, -\delta_{Y_j(s^-),z_i(s^-)}\right)\, \indic{j \leq N_c(s^-)} \indic{\theta \leq \eta(s^-, Y_j(s^-), x, z_j(s^-))} Q_{dis}(ds\, dx\, dj\, d\theta).
\end{align*}
    \item Finally, viruses dying on vectors up to time t:
\begin{align*}
	\nuvec_{los}(t) = \int_{\A(t) } \left(0,\delta_{Y_i(s-)}, -\delta_{Y_i(s^-),z_i(s-)}\right)\, \indic{i \leq N_c(s^-)} \indic{\theta \leq \gamma(z_i(s^-))} Q_{los}(ds\, di\, d\theta).
\end{align*}
\end{itemize}

\begin{definition}\label{Definitionnuvec}
The process $ \nuvec(t) = (\nu_v(t), \nuu(t), \nuc(t); t \geq 0) $ is defined as the $\f_t$-adapted solution to the equation:
\begin{align}\label{EquYt}
\langle \phivec, \nuvec(t) \rangle &= \langle \phivec , \nuvec(0) \rangle + \langle \phivec, \nuvec_{cb}(t) \rangle +  \langle \phivec, \nuvec_{bm}(t) \rangle - \langle \phivec, \nuvec_{d}(t) \rangle + \langle \phivec, \nuvec_{inf}(t) \rangle +  \langle \phivec, \nuvec_{dis}(t) \rangle+ \langle \phivec, \nuvec_{los}(t) \rangle  \nonumber \\
&+ \int_{0}^{t} \langle \mathcal{L}^u \phivec, \nuvec(s) \rangle \, ds + \int_{0}^{t} \langle \mathcal{L}^c \phivec, \nuvec(s) \rangle \, ds + \int_{0}^{t} \sum_{i=1}^{N_u(s^-)} \sigma^u(Y^u_i(s^-)) \nabla \phi_{u}(Y^u_i(s^-)) \cdot dW^{i,u}_s \nonumber \\
&+ \int_{0}^{t} \sum_{i=1}^{N_c(s^-)} \sigma^c(Y^c_i(s^-)) \nabla_y\, \phic(Y^c_i(s^-), z_i(s^-)) \cdot dW^{i,c}_s,
\end{align}
for all  $\phivec \in \Phi(\M_p)$, and  $\L^\alpha \phivec$ is given as in Remark \ref{DiffuEnco}.
\end{definition}

\begin{remark}
The diffusion of vectors forces us to integrate the equation as observed in \cite{champagnat_invasion_2007} (see in particular equation 3.2, which contains analogous terms).
\end{remark}

The following proposition gives conditions to guarantee that a solution to \eqref{EquYt} follows the dynamics given by the generator $\L$ given in \eqref{DefGen}.

\begin{proposition}\label{PropGenerator}
Let $\left(\nuvec_t\right)_{t \geq 0} = (\nu_v(t), \nuu(t), \nuc(t) ; t \geq 0)$ be a solution of \eqref{EquYt} such that for all $T>0$ we have
\begin{equation*}
 \E \left( \sup_{t \leq T} < \mathbf{1}, \nuvec_t >^2 \right) < \infty,   
\end{equation*}
where $\mathbf{1} \in \Phi(\M_p)$ denotes the admissible triplet with all functions equal to the constant 1. Then, under Assumption \ref{AssPherates}, the process $\left(\nuvec_t\right)_{t \geq 0}$ is Markov with infinitesimal generator $\L$ given by \eqref{DefGen}.
\end{proposition}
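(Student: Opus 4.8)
The plan is to establish Dynkin's formula for every cylindrical function $F_{\phivec}\in\f_C$ by applying the It\^o formula for jump--diffusion semimartingales to the real-valued process $t\mapsto F_{\phivec}(\nuvec_t)=F(\langle\phivec,\nuvec_t\rangle)$, where $(\nuvec_t)_{t\ge 0}$ is the solution of \eqref{EquYt}. Since the six Poisson integrals driving \eqref{EquYt} are independent and hence have pairwise disjoint jump times almost surely, while the continuous part of $\langle\phivec,\nuvec_t\rangle$ is the sum of the It\^o drift $\langle\L^u\phivec+\L^c\phivec,\nuvec_s\rangle$ and of the Brownian stochastic integrals, I would first decompose $F_{\phivec}(\nuvec_t)-F_{\phivec}(\nuvec_0)$ into (i) a sum over the atoms of $Q_{cb},Q_{bm},Q_d,Q_{inf},Q_{dis},Q_{los}$ of the increments $F_{\phivec}(\nuvec_{s^-}+\Delta)-F_{\phivec}(\nuvec_{s^-})$, where $\Delta$ is the triplet of Dirac masses added or removed at that event, and (ii) the continuous contribution coming from the reflected diffusion of the vectors.

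Next I would compensate. Replacing each Poisson random measure by its deterministic intensity (Lebesgue in time and in $\theta$, counting in the indices, and $\bar m(dz)$ or $\zeta_E(dx)$ in the marks) and integrating out the $\theta$-indicator---which contributes exactly the corresponding rate $b$, $\mu\, b\, m$, $d+cN_x$, $\beta$, $\eta$ or $\gamma$---turns the sum in (i) into $\int_0^t \L_1 F_{\phivec}(\nuvec_s)\,ds$ plus a purely discontinuous local martingale, thereby recovering the operators $\L^{dem},\L^{load},\L^{unload},\L^{los}$ summed in \eqref{L1def}. For the continuous part (ii), It\^o's formula produces a first-order term $F'_{\phivec}(\nuvec_s)\,\langle\L^u\phivec+\L^c\phivec,\nuvec_s\rangle$ and a second-order correction whose bracket is $\sum_\alpha\int (\sigma^\alpha)^2|\nabla\phi_\alpha|^2\,\nu_\alpha\,F''_{\phivec}(\nuvec_s)$; together these are precisely $\L_2 F_{\phivec}$ as in \eqref{DefgenL2} (the derivatives $F'_{\phivec},F''_{\phivec}$ being evaluated only at the scalar arguments $\langle\phivec,\nuvec\rangle$, so the abuse of notation there is harmless), while the Brownian integrals form a continuous local martingale. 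Collecting terms yields $F_{\phivec}(\nuvec_t)=F_{\phivec}(\nuvec_0)+\int_0^t\L F_{\phivec}(\nuvec_s)\,ds+M_t^{\phivec}$, with $\L$ as in \eqref{DefGen} and $M^{\phivec}$ a local martingale.

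The main obstacle is to upgrade $M^{\phivec}$ from a local to a genuine martingale, which is exactly where the hypothesis $\E[\sup_{t\le T}\langle\mathbf{1},\nuvec_t\rangle^2]<\infty$ is used. I would localize along the stopping times $\tau_n=\inf\{t\ge 0:\langle\mathbf{1},\nuvec_t\rangle\ge n\}$: on $[0,\tau_n]$ Proposition \ref{ControlJumpRate} bounds the global jump rate by $C\langle\mathbf{1},\nuvec_s\rangle(1+\langle\mathbf{1},\nuvec_s\rangle)$ and Assumption \ref{AssPherates} bounds the diffusion coefficients, so the predictable quadratic variation of $M^{\phivec}$ is dominated by a constant times $\int_0^t(\langle\mathbf{1},\nuvec_s\rangle+\langle\mathbf{1},\nuvec_s\rangle^2)\,ds$. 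The second-moment bound then provides the uniform integrability needed to let $n\to\infty$ (with $\tau_n\to\infty$ almost surely, as obtained in the moment estimate) and, by dominated convergence, to pass the compensator identity to the limit; this proves that $M^{\phivec}$ is a true martingale and establishes Dynkin's formula for all $F_{\phivec}\in\f_C$. Finally, since the solution of \eqref{EquYt} is a measurable functional of the driving Poisson measures and Brownian motions---objects with independent increments---and the coefficients depend on the past only through the current state, the resulting flow property gives the Markov property; equivalently, the validity of the martingale problem on the measure-determining class $\f_C$ identifies $(\nuvec_t)_{t\ge 0}$ as the Markov process with infinitesimal generator $\L$.
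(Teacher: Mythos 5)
Your proposal is correct and follows essentially the same route as the paper: apply It\^o's formula to the pathwise equation \eqref{EquYt}, split $F_{\phivec}(\nuvec_t)-F_{\phivec}(\nuvec_0)$ into a jump contribution and a diffusive contribution, recover $\L_1$ by compensating the Poisson integrals and $\L_2$ from the continuous part, and treat the Markov property as following from the flow/independence structure of the driving noises. The only difference is the finishing step: the paper takes expectations and differentiates at $t=0$ to read off the generator, whereas you conclude via Dynkin's formula and the martingale problem, upgrading the local martingale to a true one by localization along $\tau_n$ and the second-moment hypothesis --- a presentational variant that has the merit of making explicit where that hypothesis is actually used.
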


\begin{proof}
The fact that it is Markov is standard. We need to verify that the generator is the one we claimed. We will use It\^o's lemma (see for example Theorem 5.1 in \cite{ikeda2014stochastic}) applied to \eqref{EquYt} to find an expression for $F_{\phivec}(\nuvec(t))$. Let us split this in two, one part coming from jump events, and the other from diffusion, as follows:
\begin{equation*}
 F_{\phivec} (\nuvec(t))-  F_{\phivec} (\nuvec(0)) =   (F_{\phivec} (\nuvec(t))-  F_{\phivec} (\nuvec(0)))_{\text{Jump}}
 + (F_{\phivec} (\nuvec(t))-  F_{\phivec} (\nuvec(0)))_{\text{Diff}} 
\end{equation*}
The  way to verify that this corresponds to the generator $\L$ is to take expectations and later differentiate with respect to time. We refer to \cite{meleard_stochastic_2015}, Proposition 6.3 in particular, for the same procedure in the absence of diffusion. Let us do this for the diffusive part of the generator. Notice that by It\^o's lemma, the diffusive part is given by:
\begin{align*}\label{DiffIto}
(F_{\phivec} (\nuvec(t)) &-  F_{\phivec} (\nuvec(0)))_{\text{Diff}} =  \int_{0}^{t} F_{\phivec}\myprime (\nuvec(s)) \langle \mathcal{L}^u \phivec, \nuvec(s) \rangle \, ds + \int_{0}^{t} F_{\phivec}\myprime (\nuvec(s)) \sum_{i=1}^{N_u(s^-)} \sigma^u(Y^u_i(s^-)) \nabla \phi_{u}(Y^u_i(s^-)) \cdot dW^{i,u}_s  \nonumber \\
&+ \frac{1}{2}\int_{0}^{t} F_{\phivec}\mydprime (\nuvec(s)) \sum_{i=1}^{N_u(s^-)} \sigma^u(Y^u_i(s^-))^2 \mid \nabla \phi_{u}(Y^u_i(s^-))\mid^2 \, ds+ \int_{0}^{t} F_{\phivec}\myprime (\nuvec(s))\langle \mathcal{L}^c \phivec, \nuvec(s) \rangle \, ds \nonumber \\
&+ \int_{0}^{t} F_{\phivec}\myprime (\nuvec(s)) \sum_{i=1}^{N_c(s^-)} \sigma^c(Y^c_i(s^-)) \nabla_y\, \phic(Y^c_i(s^-), z_i(s^-)) \cdot dW^{i,c}_s\nonumber \\
&+ \frac{1}{2}\int_{0}^{t} F_{\phivec}\mydprime (\nuvec(s)) \sum_{i=1}^{N_c(s^-)} \sigma^c(Y^c_i(s^-))^2 \mid \nabla_y\, \phic(Y^c_i(s^-), z_i(s^-)) \mid^2 \, ds.
\end{align*}
In order to verify that we get the second part of the generator $\L$, we have to proceed as before. Notice however, that by taking expectations the It\^o integrals vanish. Differentiating what is left at $t=0$ leads to the RHS of \eqref{DefgenL2}.
\end{proof}
Now we show the well-definedness of the process $( \nuvec_t)_{t \geq 0}$. That is, Theorem \ref{Theoremwelldefined}. Moreover we show that a control of the $p$-th moment at time zero can be extended to later times. More precisely, we show the following proposition:

\begin{proposition}\label{Propcontrolp}
Let $\nuvec_0 = (\nuv(0), \nuu(0),\nuc(0))$ be such that for some $p \geq 1$, we have:
\begin{equation}\label{pboundt0}
  \mathbb{E} \left(  \langle \mathbf{1}, \nuvec_0 \rangle^p  \right) < \infty      
\end{equation}
Then, under Assumption \ref{AssPherates}, the process $\left(\nuvec_t\right)_{t \geq 0} =\left( \nu_v(t),\nuu(t), \nuc(t) : t \geq 0  \right)$ satisfies:
\begin{equation}\label{pboundt}
  \mathbb{E} \left(  \sup_{t \in [0,T]}\langle \mathbf{1}, \nuvec_t \rangle^p  \right) < \infty      
\end{equation}
In particular, if \eqref{pboundt0} holds for $p=1$, we also have that the process $\nu_t$ is well defined.
\end{proposition}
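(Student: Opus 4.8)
The plan is to run the standard localisation-and-Gronwall argument on the pathwise representation \eqref{EquYt}, in the spirit of \cite{fournier_microscopic_2004,champagnat_invasion_2007}. Write $Z_t:=\langle\mathbf 1,\nuvec_t\rangle=N_v(t)+N_u(t)+N_c(t)$ and introduce the stopping times $\tau_n:=\inf\{t\ge 0:Z_t\ge n\}$, which confine the dynamics to a region where, by Proposition \ref{ControlJumpRate}, the total jump intensity is finite and the manipulations below are justified. The first point is that the total mass is a pure-jump process: taking the admissible triplet $\phivec=\mathbf 1$ in \eqref{EquYt}, the generator terms $\mathcal L^u\mathbf 1$ and $\mathcal L^c\mathbf 1$ vanish and the two It\^o integrals vanish as well, since $\nabla\mathbf 1=0$. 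Inspecting the six jump families of Definition \ref{Definitionnuvec} one then checks that $Z$ increases by exactly $1$ only at clonal births, mutant births and unloading events, while loading and the two death mechanisms either leave $Z$ unchanged or decrease it; in particular, by Remark \ref{Conservvectors} all the growth is carried by the virus population.

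Next I would estimate the compensator of the mass-increasing jumps. By Assumption \ref{AssPherates} their instantaneous rate is bounded by $\bar{b}\,N_v(s)+\bar{\eta}\,N_c(s)\le C\,Z_{s^-}$. Combining the elementary inequality $(z+1)^p-z^p\le C_p(1+z^{p-1})$ (valid for $z\ge0$) with Dynkin's formula for the jump part of \eqref{EquYt} --- licit on $[0,t\wedge\tau_n]$ because the intensity is bounded there --- gives
\begin{equation*}
\E\!\left[Z_{t\wedge\tau_n}^{p}\right]\le\E\!\left[Z_0^{p}\right]+C_p\,\E\!\left[\int_0^{t\wedge\tau_n}(1+Z_s^{p-1})\,C\,Z_s\,ds\right]\le\E\!\left[Z_0^{p}\right]+C'\int_0^{t}\bigl(1+\E\!\left[Z_{s\wedge\tau_n}^{p}\right]\bigr)\,ds,
\end{equation*}
using $z+z^p\le 1+2z^p$. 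Gronwall's lemma yields a bound $C_T$ on $\E[Z_{t\wedge\tau_n}^p]$ that is uniform in $n$. Since $Z_{\tau_n}\ge n$ on $\{\tau_n\le T\}$, Markov's inequality then gives $\P(\tau_n\le T)\le C_T/n\to 0$, so $\tau_n\uparrow\infty$ almost surely; together with the local finiteness of the jump rate from Proposition \ref{ControlJumpRate}, this rules out explosion and, when \eqref{pboundt0} holds with $p=1$, shows that \eqref{EquYt} has a unique global solution, i.e. the process is well defined.

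To upgrade the fixed-time bound to the supremum \eqref{pboundt}, I would use the semimartingale decomposition $Z_{t\wedge\tau_n}^{p}=Z_0^{p}+A_t^{(n)}+M_t^{(n)}$ obtained from the same application of Dynkin's formula, where $A^{(n)}$ is the compensator already estimated above and $M^{(n)}$ is a martingale whose jumps have size $\le C_p(1+Z^{p-1})$. Applying Doob's $L^p$ maximal inequality (or the Burkholder--Davis--Gundy inequality) to $M^{(n)}$ and re-expressing its bracket through the compensator, one arrives at
\begin{equation*}
\E\!\left[\sup_{s\le t\wedge\tau_n}Z_s^{p}\right]\le\E\!\left[Z_0^{p}\right]+C''\int_0^{t}\Bigl(1+\E\!\left[\sup_{r\le s\wedge\tau_n}Z_r^{p}\right]\Bigr)\,ds,
\end{equation*}
and a second application of Gronwall gives the bound
\begin{equation}\label{GronwApp}
\E\!\left[\sup_{t\in[0,T\wedge\tau_n]}\langle\mathbf 1,\nuvec_t\rangle^{p}\right]\le C\,e^{BT},
\end{equation}
with $C,B$ depending on $p$, the constants of Assumption \ref{AssPherates} and $\E[\langle\mathbf 1,\nuvec_0\rangle^p]$ but not on $n$. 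Letting $n\to\infty$ and invoking Fatou's lemma (using $\tau_n\uparrow\infty$) then yields \eqref{pboundt}.

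The main obstacle is precisely the passage to the supremum: the clean Dynkin estimate only controls $\E[Z_{t\wedge\tau_n}^p]$ at a fixed time, whereas \eqref{pboundt} requires a uniform-in-$n$ control of the running maximum of the martingale part $M^{(n)}$. The delicate point is to bound $\E[\sup_s|M^{(n)}_s|^p]$ through BDG in such a way that its bracket is re-expressed in terms of the predictable compensator (which is $\le C\int_0^t Z_s\,ds$) rather than in terms of $Z$ itself, so that the resulting inequality is genuinely of Gronwall type and closes; the moment inequalities for point processes used in \cite{fournier_microscopic_2004} are tailored for exactly this. Everything else --- that the upward jump rate is genuinely $\le C Z_s$ and that the localised compensation formula applies --- is a routine consequence of Assumption \ref{AssPherates} and Proposition \ref{ControlJumpRate}.
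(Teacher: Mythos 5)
Your overall scaffolding --- localisation at $\tau_n$, the fixed-time Dynkin--Gronwall estimate, the deduction that $\tau_n\uparrow\infty$ via Markov's inequality, non-explosion and well-definedness for $p=1$, and the final Fatou passage --- coincides with the paper's argument. The genuine gap is in the one step you yourself flag as the main obstacle: upgrading $\E\left[Z^p_{t\wedge\tau_n}\right]$ to $\E\left[\sup_{s\le t\wedge\tau_n}Z^p_s\right]$. As sketched, that step does not close. Doob's $L^p$ maximal inequality is not available in the form you need (you need the \emph{first} moment of $\sup_s|M^{(n)}_s|$, and there is no useful $L^1$ Doob inequality), and the BDG route hits a moment mismatch: the jumps of $M^{(n)}$ have size $(Z+1)^p-Z^p\le C_p(1+Z^{p-1})$, so the bracket satisfies
\begin{equation*}
\E\left[[M^{(n)}]_t\right]\;\le\; C\,\E\left[\int_0^{t\wedge\tau_n}\bigl(Z_s+Z_s^2\bigr)\bigl(1+Z_s^{2p-2}\bigr)\,ds\right],
\end{equation*}
(the quadratic factor coming from the competition and loading rates), which involves moments of order at least $2p-1$, strictly larger than $p$; bounding $\E\left[[M^{(n)}]_t^{1/2}\right]$ by Jensen then requires moments you have not assumed in \eqref{pboundt0}. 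The BDG route can in principle be rescued --- split $[M]^{1/2}\le(\sup_s|\Delta M_s|)^{1/2}\left(\sum_s|\Delta M_s|\right)^{1/2}$, apply Young's inequality, and absorb a small multiple of $\E\left[\sup_s Z_s^p\right]$ into the left-hand side, which is finite thanks to the localisation --- but none of this is in your proposal, and your appeal to \cite{fournier_microscopic_2004} is misplaced: the sup-moment bounds there are not obtained by BDG.

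What the paper does (and what \cite{fournier_microscopic_2004} actually does) is avoid maximal inequalities altogether by exploiting monotonicity at the pathwise level, \emph{before} taking expectations. Since the total mass is pure jump, drop the negative (death and loading) terms in \eqref{EquYt}: pathwise,
\begin{equation*}
Z^p_s\;\le\; Z^p_0+I_s,\qquad s\le t\wedge\tau_n,
\end{equation*}
where $I$ is the Poisson integral collecting only the mass-increasing events (clonal births, mutant births, unloadings), with nonnegative integrand $(Z_{r^-}+1)^p-Z^p_{r^-}$. Because $I$ is nondecreasing in time, $\sup_{s\le t\wedge\tau_n}Z^p_s\le Z^p_0+I_{t\wedge\tau_n}$, and $\E\left[I_{t\wedge\tau_n}\right]$ is computed directly from the intensity measures of the driving Poisson random measures: by Assumption \ref{AssPherates} and the conservation of the number of vectors (Remark \ref{Conservvectors}), the up-jump rate is at most $C Z_s$, which gives precisely your Gronwall inequality but with the supremum already inside the expectation. (The paper carries this out for $\langle 1,\nuv(t)\rangle$ alone, invoking vector conservation; tracking the full mass $Z$ as you do is equally fine.) This monotonicity device is the missing idea; once it replaces your Doob/BDG step, the rest of your argument goes through verbatim.
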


\begin{proof}
To show \eqref{pboundt}, because the number of vectors is invariant under the dynamics, it is enough to show:
\begin{equation*}\label{pboundtvect}
  \mathbb{E} \left(  \sup_{t \in [0,T]}\langle 1, \nuv(t) \rangle^p  \right) < \infty.     
\end{equation*}
In order to do so, we use a stopping time argument. Let us define $\tau_n$ as follows:
\begin{equation*}
    \tau_n = \inf \left\{ t \geq 0: \langle 1, \nuv(t) \rangle \geq n \right\}
\end{equation*}
Let us also consider $F_p: \M_p^{\alpha} \to \R$ given by the choice:
\begin{equation*}
    F_p( \nuvec ) = \left( \langle 1,\nuv  \rangle\right)^p + \left( \langle 1,\nuc  \rangle\right)^p.
\end{equation*}
Hence, by \eqref{EquYt} we have:
\begin{align*}
\sup_{s \in [0,t \wedge \tau_n] } &\langle 1, \nuv(s) \rangle^p  \leq   \langle 1, \nuv(0)  \rangle^p  \nonumber \\
&\hspace{-0.8cm}+  \int_0^{t \wedge \tau_n} \int_{ \N^* \times \R_+} \left[ (\langle 1, \nuv(s-) \rangle +1 )^p  - \langle 1, \nuv(s-) \rangle^p \right]  \, \indic{i \leq N_v(s^-)} \indic{\theta \leq (1-\mu) b(x_i(s^-), z_i(s^-))} Q_{cb}(ds\, di\, d\theta) \nonumber \\
&\hspace{-0.8cm}+  \int_0^{t \wedge \tau_n} \int_{\N^* \times \R_+ \times E} \left[ (\langle 1, \nuv(s-) \rangle +1 )^p  - \langle 1, \nuv(s-) \rangle^p \right]  \, \indic{j \leq N_c(s^-)} \indic{\theta \leq \eta(s^-, Y_j(s^-), x, z_j(s^-))} Q_{dis}(ds\, dx\, dj\, d\theta) \nn \\
&\hspace{-0.8cm}+  \int_0^{t \wedge \tau_n} \int_{\N^* \times \mathcal{X} \times \R_+} \left[ (\langle 1, \nuv(s-) \rangle +1 )^p  - \langle 1, \nuv(s-) \rangle^p \right]  \indic{i \leq N_v(s^-)} \indic{\theta \leq \mu b(x_i(s^-), z_i(s^-)) m(z_i(s^-), z)} Q_{bm}(ds\, di\, dz\, d\theta).
\end{align*}
where the diffusion terms vanished due to the presences of derivatives of the constant function $1$, and we have dropped the integral terms with a negative contribution (death terms, and loading terms).\\

Taking expectations and using the bounds given by Assumption \ref{AssPherates} we obtain:
\begin{align*}
\E \left[ \sup_{s \in [0,t \wedge \tau_n] } \langle 1, \nuv(s) \rangle^p \right]  &\leq  \E \left[ \langle 1, \nuv(0)  \rangle^p \right] +  C \E \left[\int_0^{t \wedge \tau_n}  N_v(s) \left[ (\langle 1, \nuv(s) \rangle +1 )^p  - \langle 1,\nuv(s) \rangle^p \right]  \, \right] \, ds \nonumber \\
&+ C \E \left[ \int_0^{t \wedge \tau_n}  \left[ (\langle 1, \nuv(s) \rangle +1 )^p  - \langle 1, \nuv(s) \rangle^p \right] \right] \, ds. \nonumber\\
&\leq C \left( 1 + \E \left[\int_0^{t \wedge \tau_n}  N_v(s) \left[ (\langle 1, \nuv(s) \rangle +1 )^p  - \langle 1, \nuv(s) \rangle^p \right]  \, ds \right]  \right) 
\end{align*}
where the constant $C$ changed its value incorporating the constant given by \eqref{pboundt0}.\\

Using the fact that $N_v(s) \leq \langle 1, \nuv(s) \rangle$, and the simple inequality $(x +1)^p - x^p \leq C_p (1 + x^{p-1})$, we obtain:
\begin{align*}
\E \left[ \sup_{s \in [0,t \wedge \tau_n] } \langle 1, \nuv(s)\rangle^p \right] &\leq C_p \left( 1 + \E \left[\int_0^{t}   \left[ \langle 1, \nuv(s \wedge \tau_n) \rangle  + \langle 1, \nuv(s\wedge \tau_n) \rangle^p \right]  \, ds \right]  \right) \nn \\
&\leq C_p \left( 1 + \E \left[\int_0^{t}    \langle 1, \nuv(s \wedge \tau_n ) \rangle^p   \, ds \right]  \right)
\end{align*}
where again the constant $C_p$ changed its value incorporating new constants.\\

By Gronwalls inequality we then have:
\begin{equation}\label{GronwApp}
 \E \left[ \sup_{s \in [0,t \wedge \tau_n] } \langle 1, \nuv(s) \rangle^p \right] \leq   C_p e^{C_p t}
\end{equation}
for some constant $C_p$ independent of $n$.\\

From \eqref{GronwApp} we can deduce that $\tau_n$ goes to infinity as $n\to \infty $ a.s. We then apply Fatou's lemma to conclude:
\begin{equation*}
 \E \left[ \sup_{s \in [0,t ] } \langle 1, \nuv(s) \rangle^p \right] \leq \liminf_{n \to \infty} \E \left[ \sup_{s \in [0,t \wedge \tau_n] } \langle 1, \nuv(s) \rangle^p \right] \leq   C_p e^{C_p t}.
\end{equation*}

To conclude the well-definedness of the process $\nu_t$ one has to construct the process step by step, where the time steps are given by a sequence of jump instants $T_n$ exponentially distributed  with law:
\begin{equation*}
R(\nu_{n-1}) e^{-R(\nu_{n-1})t} 
\end{equation*}
and where the total jump rate $R(\nu)$ is given by \eqref{DefRnu}.\\

It is then enough to check that the sequence $T_n$ goes to infinity almost surely. This follows from
\begin{equation*}
  \E \left(  \sup_{t \in [0,T]}\langle 1, \nuvec_t \rangle  \right) < \infty      
\end{equation*}
which is a consequence of \eqref{pboundt} when $p=1$.
\end{proof}

\subsection{Relevant martingales}
Now we will introduce a few martingales that are relevant when computing scaling limits. Let us start from a simple application of Dynkin's theorem:

\begin{proposition}\label{DynkinMartTheo}
Let $\nu_0 = (\nuv(0), \nuu(0), \nuc(0)) $ be such that for some $p \geq 2$ we have:
\begin{equation*}
  \mathbb{E} \left(  \langle 1, \nuv_0 \rangle^p  \right) < \infty 
\end{equation*}
Let also $F \in \mathcal{C}^2(\R)$ and  $\phivec \in \Phi(\M_p)$, be such that  there exists a $C$, possibly dependent on $F$ and $\phivec$, such that:
\begin{equation}\label{ControlFplusgen}
|F_{\phivec} (\nu)| +  |\L F_{\phivec} (\nu)|  \leq C \,\left( 1+ \langle \nu, \mathbf{1} \rangle^p \right) 
\end{equation}
where $F_\phivec$ is given as in Definition \ref{CyliDef}\\

Then, under Assumption \ref{AssPherates}, we have that the process 
\begin{equation}\label{DynkinMart}
M_t(F_{\phivec}) = F_{\phivec} (\nuvec_t)) - F_{\phivec} (\nuvec_0)- \int_0^t \L F_{\phivec} (\nuvec_s) \, ds
\end{equation}
is a càdlàg martingale. 
\end{proposition}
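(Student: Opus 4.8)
The plan is to identify $M_t(F_{\phivec})$ with the purely martingale part of the semimartingale decomposition of $F_{\phivec}(\nuvec_t)$, and then to upgrade the resulting local martingale to a genuine martingale by a localization-and-domination argument fueled by the moment control of Proposition \ref{Propcontrolp}. First I would apply It\^o's formula to the path-wise representation \eqref{EquYt}, exactly as in the proof of Proposition \ref{PropGenerator}. Writing each Poisson random measure $Q$ as its compensated version $\tilde Q$ plus its intensity, this produces the decomposition
\[
F_{\phivec}(\nuvec_t) = F_{\phivec}(\nuvec_0) + \int_0^t \L F_{\phivec}(\nuvec_s)\,ds + M_t(F_{\phivec}),
\]
where $M_t(F_{\phivec})$ collects the compensated Poisson integrals (from the jump events $Q_{cb}, Q_{bm}, Q_d, Q_{inf}, Q_{dis}, Q_{los}$) together with the It\^o integrals against the Brownian motions $W^{i,\alpha}$, while all the absolutely continuous terms assemble precisely into $\int_0^t \L F_{\phivec}(\nuvec_s)\,ds$ with $\L$ given by \eqref{DefGen}. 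By construction $M_t(F_{\phivec})$ is c\`adl\`ag and is a priori a local martingale.

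To promote it to a true martingale I would localize with the stopping times $\tau_n = \inf\{t \geq 0 : \langle \mathbf{1}, \nuv(t)\rangle \geq n\}$ already used in the proof of Proposition \ref{Propcontrolp}. On $[0, t\wedge\tau_n]$ the total number of individuals is bounded by $n$ and, by Assumption \ref{AssPherates}, all rates and hence all compensators are bounded; the Poisson part is then a compensated integral of a bounded integrand against a finite-intensity measure over a bounded horizon, and the Brownian part is an It\^o integral of a bounded integrand, so each is a square-integrable martingale. Consequently $\{M_{t\wedge\tau_n}(F_{\phivec})\}_{t\geq0}$ is a genuine martingale and $\E[M_{t\wedge\tau_n}(F_{\phivec}) \mid \f_s] = M_{s\wedge\tau_n}(F_{\phivec})$ for all $s \leq t$.

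It then remains to let $n \to \infty$. Proposition \ref{Propcontrolp} gives $\tau_n \to \infty$ almost surely, so $M_{t\wedge\tau_n}(F_{\phivec}) \to M_t(F_{\phivec})$ a.s. by right-continuity. The growth hypothesis \eqref{ControlFplusgen} furnishes the domination, uniform in $n$,
\[
\bigl|M_{t\wedge\tau_n}(F_{\phivec})\bigr| \leq |F_{\phivec}(\nuvec_0)| + C\,(1+t)\Bigl(1 + \sup_{s\leq t}\langle \mathbf{1}, \nuvec_s\rangle^p\Bigr),
\]
whose right-hand side is integrable since $\E[\langle \mathbf{1}, \nuvec_0\rangle^p]<\infty$ by hypothesis and $\E[\sup_{s\leq t}\langle \mathbf{1}, \nuvec_s\rangle^p] < \infty$ by Proposition \ref{Propcontrolp}. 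Hence $\{M_{t\wedge\tau_n}(F_{\phivec})\}_n$ is uniformly integrable, the convergence holds in $L^1$, and passing to the limit in the stopped identity (using $L^1$-continuity of $\E[\,\cdot\mid\f_s]$) yields $\E[M_t(F_{\phivec})\mid\f_s] = M_s(F_{\phivec})$, which is the desired martingale property of \eqref{DynkinMart}.

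The main obstacle I anticipate lies in this last passage to the limit: one must control the localized family uniformly so that neither integrability nor the conditional-expectation identity is lost in the limit. The decisive quantitative input is the combination of the $p$-th moment bound of Proposition \ref{Propcontrolp} with the growth estimate \eqref{ControlFplusgen}, which together produce a single integrable dominating random variable independent of $n$. By contrast, verifying that the stopped Poisson and Brownian stochastic integrals are true $\f_t$-martingales (rather than merely local ones) is routine once boundedness of the integrands on $[0,t\wedge\tau_n]$ has been recorded, and the assembly of the compensator terms into $\L F_{\phivec}$ is exactly the computation carried out in Proposition \ref{PropGenerator}.
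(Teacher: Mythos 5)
Your proposal is correct and follows essentially the same route as the paper: the paper likewise obtains the local-martingale property from Proposition \ref{PropGenerator} together with Dynkin's theorem, and then upgrades it to a true martingale using the growth bound \eqref{ControlFplusgen} and the moment estimate of Proposition \ref{Propcontrolp}. Your write-up simply makes explicit the localization with $\tau_n$ and the uniform-integrability/domination step that the paper's two-line proof leaves implicit.
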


\begin{proof}
From Proposition \ref{PropGenerator} and Dynkin's theorem we know that $M_t(F_{\alpha})$ is a local martingale. Hence, it is enough to show that the R.H.S. of \eqref{DynkinMart} is integrable. This is a consequence of Assumption  \eqref{ControlFplusgen} and Proposition \ref{Propcontrolp}.
\end{proof}

\begin{remark}\label{UsingDynkinMart}
Simple but tedious computations show that for, $1 \leq  q \leq p-1$,  $\phi_v \in \mathcal{C}_b^2( \Vp),\phiu \in \mathcal{C}_b^2(\D) \cap D(\L^{u})$ and $\phic \in \mathcal{C}_b^2( \Vc) \cap D(\L^c)$, the functions
\begin{align*}
  F_{v}^q(\nu_v)  :=  \left( \langle \phiv , \nuv \rangle \right)^q, \quad F_{u}^q(\nuu)  :=  \left(  \langle \phiu, \nuu \rangle \right)^q, \text{ and } F_{c}^q(\nuc)  :=  \left( \langle \phic, \nuc  \rangle \right)^q
\end{align*}
satisfy \eqref{ControlFplusgen}. 
\end{remark}

The following result is a consequence of Proposition \ref{DynkinMartTheo} and  Remark \ref{UsingDynkinMart}

\begin{proposition}\label{MartingalesProp}
Let $\phiv \in \mathcal{C}_b^2( \Vp),\phiu \in  D(\L^{u})$ and $\phic \in D(\L^{c})$, under Assumption \ref{AssPherates},
we have the following càdlàg martingales:
\begin{enumerate}
    \item For the virus population:
    \begin{align}\label{MartH}
        M_t^{\phi_v}  &= \langle \nuv(t) , \phiv\rangle- \langle \nuv(0) , \phiv\rangle - \int_0^t \int_{\Vp} b(x,z) \phiv(x,z) \nuv(s)(dx,dz) \,  ds \nonumber \\
        &+ \mu \int_0^t \int_{\Vp} b(x,z)\left[ \phiv(x,z) -\int_{ \X} m(z,z^\prime) \phiv(x,z^\prime) \, dz^\prime  \right] \nuv(s)(dx,dz) \,  ds \nonumber \\
        &+  \int_0^t \left[ \int_{\Vp} \left( d(z) + c \langle \nuv^{x}, 1\rangle \right) \phiv(x,z) \nuv(s)(dx,dz)  +   \int_{\Vp \times \D} \beta(s,y,x,\langle \nuv^{x}, 1\rangle,z) \phi_v(x,z) \nuv(s)(dx,dz) \,  \nuu(s)(dy) \right] ds \nonumber \\
        & -  \int_0^t   \int_{\Vc \times E} \eta(s,y,x,z) \phiv(x,z)  \,  \nuc(s)(dy,dz) \, dx \,   ds 
    \end{align}
    is a càdlàg martingale with predictable quadratic variation
    \begin{align}\label{MartHQV}
     \langle M^{\phi_v} \rangle_t &=  (1-\mu) \int_0^t \int_{\Vp} b(x,z) \, \phi_v(x,z)^2 \, \nu_v(s)(dx,dz) \,  ds + \mu \int_0^t \int_{\Vp \times \X} b(x,z) m(z,z^\prime)  \,  \phiv(x,z^\prime)^2  \, dz^\prime   \nuv(s)(dx,dz) \,  ds \nonumber \\
        &+  \int_0^t  \int_{\Vp} \left( d(z) + c \langle \nuv^{x}, 1\rangle \right)  \,   \phiv(x,z)^2 \, \nuv(s)(dx,dz) +  \int_0^t   \int_{\Vc \times E} \eta(s,y,x,z) \,  \phiv(x,z)^2   \,  \nuc(s)(dy,dz) \, dx \,   ds  \nn \\
        &+ \int_0^t  \int_{\Vp \times \D} \beta(s,y,x,\langle \nuv^{x}, 1\rangle,z)  \,   \phi_v(x,z)^2 \, \nuv(s)(dx,dz) \,  \nuu(s)(dy)  ds
    \end{align}
    \item For the free-vector population:
    \begin{align}\label{MartF}
        M_t^{\phiu}  &= \langle \nuu(t) , \phiu \rangle- \langle \nuu(0) , \phiu\rangle -\int_0^t \int_{\D} \L^u \phiu(y) \, \nuu(s)(dy) \, ds -  \int_0^t   \int_{\Vc \times E} \eta(s,y,x,z) \phiu(y) \nuc(s)(dy,dz) \,  dx \,   ds  \nonumber \\
        &+  \int_0^t \left[ \int_{\Vp \times \D} \beta(s,y,x,\langle \nuv^{x}, 1\rangle,z) \phiu(y) \nuv(s)(dx,dz) \,  \nuu(s)(dy) -\int_{\Vc} \gamma(z) \phiu(y) \, \nuc(s)(dy,dz)  \right] ds 
    \end{align}
    is a càdlàg martingale with predictable  quadratic variation
    \begin{align*}
     \langle M^{\phiu}  \rangle_t &=  \int_0^t \int_{\D} \sigma^u(y)^2  \, |\nabla \phiu(y) |^2 \, \nuu(s)(dy) \, ds +  \int_0^t  \int_{\Vc \times E} \eta(s,y,x,z) \phiu(y)^2 \nuc(s)(dy,dz) \,  dx \,   ds  \nonumber \\
        &+ \int_0^t  \int_{\Vp \times \D} \beta(s,y,x,\langle \nuv^{x}, 1\rangle,z) \,   \phiu(y)^2 \, \nuv(s)(dx,dz) \,  \nuu(s)(dy)  ds +\int_0^t \int_{\Vc} \gamma(z) \phiu(y)^2 \, \nuc(s)(dy,dz) \, ds  
    \end{align*}
    \item For the charged-vector population:
    \begin{align}\label{MartG}
        M_t^{\phic}  &= \langle \nuc(t) , \phic\rangle- \langle \nuc(0) , \phic\rangle -\int_0^t \int_{\Vc} \L^c \phic(y,z) \, \nuc(s)(dy,dz) \, ds +  \int_0^t   \int_{\Vc \times E} \eta(s,y,x,z) \phic(y,z) \nuc(s)(dw,du) \,  dx \,   ds \nonumber \\
        &-  \int_0^t \left[ \int_{\Vp \times \D} \beta(s,y,x,\langle \nuv^{x}, 1\rangle,z) \phic(y,z) \nuv(s)(dx,dz) \,  \nuu(s)(dy) -\int_{\Vc} \gamma(y) \phic(y,z) \, \nuc(s)(dy,dz)  \right] ds 
    \end{align}
    is a càdlàg martingale with quadratic variation
    \begin{align*}
        \langle M^{\phic} \rangle_t  &= \int_0^t \int_{\Vc} \sigma^c(y)^2 | \nabla_y \phic(y,z) |^2 \, \nuc(s)(dy,dz) \, ds +  \int_0^t   \int_{\Vc \times E} \eta(s,y,x,z) \phic(y,z)^2 \nuc(s)(dy,dz) \,  dx \,   ds \nonumber \\
        &+  \int_0^t \left[ \int_{\Vp \times \D} \beta(s,y,x,\langle \nuv^{x}, 1\rangle,z) \phic(y,z)^2 \nuv(s)(dx,dz) \,  \nuu(s)(dy) +\int_{\Vc} \gamma(y) \phic(y,z)^2 \, \nuc(s)(dy,dz)  \right] ds 
    \end{align*}
\end{enumerate}
\end{proposition}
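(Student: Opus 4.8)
The plan is to derive all three martingales and their predictable quadratic variations from the Dynkin-type statement of Proposition \ref{DynkinMartTheo}, applied successively to the linear and the quadratic cylindrical functionals of Remark \ref{UsingDynkinMart}. For each $\alpha \in \lbrace v, u, c\rbrace$ I would first take $F(x)=x$ and the functional $\nuvec \mapsto \langle \phi_\alpha, \nu_\alpha\rangle$; by Remark \ref{UsingDynkinMart} this functional satisfies the growth bound \eqref{ControlFplusgen} (with $q=1$, so $p\ge 2$ suffices), so Proposition \ref{DynkinMartTheo} guarantees that $\langle \phi_\alpha, \nu_\alpha(t)\rangle - \langle \phi_\alpha, \nu_\alpha(0)\rangle - \int_0^t \L\langle\phi_\alpha,\cdot\rangle(\nuvec_s)\,ds$ is a càdlàg martingale. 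It then remains to expand $\L = \L_1 + \L_2$ from \eqref{DefGen} on this functional. In the jump part $\L_1$ every bracketed difference $\langle\phi_\alpha,\nu_\alpha\pm\delta\rangle - \langle\phi_\alpha,\nu_\alpha\rangle$ collapses to $\pm$ the value of $\phi_\alpha$ at the point being added or removed, which reproduces precisely the $b$, $d+c\langle\nuv^{x},1\rangle$, $\beta$, $\eta$ and $\gamma$ integrands of \eqref{MartH}, \eqref{MartF} and \eqref{MartG}; in the diffusive part $\L_2$ of \eqref{DefgenL2} only the first-order term survives, since the outer second derivative of $F(x)=x$ vanishes, contributing $\int_\D \L^u\phiu(y)\,\nuu(dy)$ and the analogous $\L^c$ term.

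For the predictable quadratic variations I would invoke Proposition \ref{DynkinMartTheo} a second time with $F(x)=x^2$, i.e.\ on the functional $G_{\phivec} := \langle\phi_\alpha,\cdot\rangle^2$, which by Remark \ref{UsingDynkinMart} satisfies \eqref{ControlFplusgen} with $q=2$ and hence requires the third-moment bound $p=3$. Combining the two resulting martingales through the standard carré-du-champ identity gives $\langle M^{\phi_\alpha}\rangle_t = \int_0^t \bigl(\L G_{\phivec} - 2\langle\phi_\alpha,\cdot\rangle\,\L\langle\phi_\alpha,\cdot\rangle\bigr)(\nuvec_s)\,ds$. For the jump generator this difference telescopes to $\int [\,\langle\phi_\alpha,\nu_\alpha+\Delta\rangle - \langle\phi_\alpha,\nu_\alpha\rangle\,]^2$ integrated against each jump rate, which is exactly the corresponding integrand of \eqref{MartH}--\eqref{MartG} with $\phi_\alpha$ replaced by $\phi_\alpha^2$. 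For the diffusive generator the first-order term of \eqref{DefgenL2} cancels against $2\langle\phi_\alpha,\cdot\rangle\,\L\langle\phi_\alpha,\cdot\rangle$, while the second-order term, carrying the outer second derivative equal to $2$, leaves $\int_\D \sigma^u(y)^2|\nabla\phiu(y)|^2\,\nuu(dy)$ (and the $\sigma^c$ analogue), the factor $\tfrac12$ in \eqref{DefgenL2} being absorbed by this derivative.

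The computation is otherwise routine once the carré-du-champ identity is in place; the step I expect to require the most care is the diffusive bookkeeping, namely tracking how the first- and second-derivative pieces of \eqref{DefgenL2} act differently on $F(x)=x$ and $F(x)=x^2$, so that the generator term $\int_\D\L^u\phiu(y)\,\nuu(dy)$ enters the martingale while the gradient term $\int_\D\sigma^u(y)^2|\nabla\phiu(y)|^2\,\nuu(dy)$ enters the quadratic variation \emph{without} the factor $\tfrac12$. The upgrade from a local to a genuine martingale, together with the integrability of all the integrands above, is supplied by the growth bound \eqref{ControlFplusgen} of Remark \ref{UsingDynkinMart} combined with the moment estimate \eqref{pboundt} of Proposition \ref{Propcontrolp}.
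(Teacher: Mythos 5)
Your proposal is correct and follows essentially the same route as the paper: the paper's proof likewise obtains \eqref{MartH}, \eqref{MartF}, \eqref{MartG} from Proposition \ref{DynkinMartTheo} and Remark \ref{UsingDynkinMart} with $q=1$ (under a $p\geq 3$ moment assumption), and then identifies the predictable quadratic variations using $q=2$ together with It\^o's formula and the Doob--Meyer decomposition, which is exactly your carr\'e-du-champ computation $\langle M^{\phi_\alpha}\rangle_t=\int_0^t\bigl(\L G_{\phivec}-2\langle\phi_\alpha,\cdot\rangle\,\L\langle\phi_\alpha,\cdot\rangle\bigr)(\nuvec_s)\,ds$ made explicit. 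Your expansion of the jump and diffusive parts, including the cancellation of the factor $\tfrac12$ against $F\mydprime=2$, correctly reproduces all the stated integrands, so the only difference is that you spell out details the paper delegates to a citation.
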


\begin{proof}
Assume $p \geq 3$. By Theorem \ref{DynkinMartTheo} and Remark \ref{UsingDynkinMart} with $q=1$, we have that \eqref{MartH}, \eqref{MartF}, and \eqref{MartG} are càdlàg martingales. For the predictable quadratic variation we just use Remark \ref{UsingDynkinMart}  with  $q=2$, and It\^o formula toghether with the Doob-Meyer decomposition. See \cite{meleard_stochastic_2015} for examples on a similar context.
\end{proof}

\end{document}